\documentclass{amsart}

\usepackage[all, cmtip]{xy}
\usepackage{amsmath,amsfonts,mathdots,graphicx,epstopdf}
\usepackage{enumerate}

\theoremstyle{definition}
\newtheorem{definition}{Definition}
\newtheorem{example}{Example}
\newtheorem{remark}{Remark}

\theoremstyle{plain}
\newtheorem{lemma}{Lemma}
\newtheorem{theorem}{Theorem}

\newtheorem{corollary}{Corollary}

\DeclareMathOperator{\res}{res}
\DeclareMathOperator{\End}{End}
\DeclareMathOperator{\im}{im}
\DeclareMathOperator{\Hom}{Hom}
\DeclareMathOperator{\Aff}{Aff}
\DeclareMathOperator{\Ext}{Ext}
\DeclareMathOperator{\rk}{rank}
\DeclareMathOperator{\Gr}{Gr}
\DeclareMathOperator{\dlog}{dlog}
\DeclareMathOperator{\coker}{coker}

\let\coprod=\undefined
\DeclareSymbolFont{cmlargesymbols}{OMX}{cmex}{m}{n}
\DeclareMathSymbol{\coprod}{\mathop}{cmlargesymbols}{"60}

\let\amalg=\undefined
\DeclareSymbolFont{cmsymbols}{OMS}{cmsy}{m}{n}
\DeclareMathSymbol{\amalg}{\mathbin}{cmsymbols}{"71}

\begin{document}

\newcommand{\C}{\mathbb{C}}
\newcommand{\Proj}{\mathbb{P}}
\newcommand{\Z}{\mathbb{Z}}
\newcommand{\calO}{\mathcal{O}}
\newcommand{\calL}{\mathcal{L}}
\newcommand{\calX}{\mathcal{X}}
\newcommand{\calE}{\mathcal{E}}
\newcommand{\calA}{\mathcal{A}}
\newcommand{\calF}{\mathcal{F}}
\newcommand{\calD}{\mathcal{D}}
\newcommand{\calH}{\mathcal{H}}
\newcommand{\calP}{\mathcal{P}}
\newcommand{\CC}{\mathbb{C}}
\newcommand{\QQ}{\mathbb{Q}}
\newcommand{\ZZ}{\mathbb{Z}}
\newcommand{\RR}{\mathbb{R}}
\newcommand{\VV}{\mathbb{V}}
\renewcommand{\AA}{\mathbb{A}}
\newcommand{\HH}{\mathbb{H}}
\newcommand{\PP}{\mathbb{P}}
\newcommand{\shO}{\mathcal{O}}
\newcommand{\shV}{\mathcal{V}}
\newcommand{\shF}{\mathcal{F}}
\newcommand{\shT}{\mathcal{T}}
\newcommand{\ra}{\to}
\newcommand {\llim}  {{\operatorname{lim}}}
\newcommand {\lra}  {\longrightarrow}
\newcommand {\id}  {\operatorname{id}}
\newcommand {\hra} {\hookrightarrow}
\newcommand {\st} {\mathrm{st}}
\newcommand {\aff} {\mathrm{aff}}

\title{An Introduction to Hodge Structures}

\author[S. A. Filippini]{Sara Angela Filippini}
\address{Institut f\"{u}r Mathematik, Universit\"{a}t Z\"{u}rich, Winterthurerstrasse 190, CH-8057 Z\"{u}rich, Switzerland}
\email{saraangela.filippini@math.uzh.ch}

\author[H. Ruddat]{Helge Ruddat}
\address{Math. Institut, Univ. Mainz, Staudingerweg 9, 55099 Mainz, Germany}
 \email{ruddat@uni-mainz.de}

\author[A. Thompson]{Alan Thompson}
\address{Department of Pure Mathematics, University of Waterloo, 200 University Ave W, Waterloo, ON, N2L 3G1, Canada}
\email{am6thomp@uwaterloo.ca}
 \thanks{A. Thompson was supported by a Fields-Ontario-PIMS postdoctoral fellowship with funding provided by NSERC, the Ontario Ministry of Training, Colleges and Universities, and an Alberta Advanced Education and Technology Grant.}

\begin{abstract} We begin by introducing the concept of a Hodge structure and give some of its basic properties, including the Hodge and Lefschetz decompositions. We then define the period map, which relates families of K\"{a}hler manifolds to the families of Hodge structures defined on their cohomology, and discuss its properties. This will lead us to the more general definition of a variation of Hodge structure and the Gauss-Manin connection. We then review the basics about mixed Hodge structures with a view towards degenerations of Hodge structures; including the canonical extension of a vector bundle with connection, Schmid's limiting mixed Hodge structure and Steenbrink's work in the geometric setting. Finally, we give an outlook about Hodge theory in the Gross-Siebert program.
\end{abstract}

\maketitle

\section{Introduction}

At the time of writing this survey, Hodge theory stands as one of the most important and active research areas in algebraic geometry. As such it is a vast subject, with many good introductory surveys and textbooks already available for researchers new to the field. Our aim here is not to compete with this existing literature, nor do we claim that our survey is in any way comprehensive. Instead, in keeping with the thematic goal of this volume, we aim to give a concise introduction to some of the Hodge theoretic ideas that appear in the study of Calabi-Yau varieties, with the intention of providing the reader with the theoretical tools necessary to attack some of the more advanced chapters found herein.

The main part of this survey is divided into three sections. In the first (Sect. \ref{sect:Hodgestructures}), we give basic results about Hodge structures that will be used throughout. The second section (Sect. \ref{sect:vhs}) gives an overview of the theory of variations of Hodge structure, which describes how Hodge structures vary in families. Here we focus heavily on the period map, with the aim of developing the theory needed to study the moduli of K3 surfaces; this will be discussed further in \cite{gmk3s}. Finally, the third section (Sect. \ref{sect:mhs}) gives an introduction to mixed Hodge structures, which we use to study the behaviour of Hodge structures under degeneration. This section aims to provide the reader with the background needed to study the Gross-Siebert approach to mirror symmetry, which will be discussed briefly in the final section of this survey (Sect. \ref{sect:Hodgetoric}) and in significantly more detail in \cite{eagsp}.

For readers interested in pursuing this subject further, as a standard reference for much of the material covered in Sects. \ref{sect:Hodgestructures} and \ref{sect:vhs} we recommend the book by Voisin \cite{htcagI}; we use this book as our main reference in the text wherever possible. When discussing the theory of the period map in Sect. \ref{sect:vhs}, we also find the book by Carlson, M\"{u}ller-Stach and Peters \cite{pmpd} to be an indispensable reference; this book also contains a good discussion of the motivation for introducing mixed Hodge structures. In Sect. \ref{sect:mhs} we switch to the book by Peters and Steenbrink \cite{PS08}, which provides a comprehensive introduction to the theory of mixed Hodge structures. Finally, a general introduction to the Gross-Siebert program may be found in several articles by Gross and Siebert, we recommend  \cite{Gr13} \cite{GS03a} \cite{GS11}.

\section{Hodge Structures} \label{sect:Hodgestructures}

The first section of these notes will introduce the basic notion of a \emph{Hodge structure} and give some fundamental results about them. Historically, Hodge structures were first developed to study compact K\"{a}hler manifolds, but they have since found much broader applications. In our treatment we will reverse the historical order, first giving a brief introduction to the formal aspects of Hodge structures, then specializing to discuss some of the results obtained in the context of compact K\"{a}hler manifolds. This discussion will culminate in the Hodge and Lefschetz decomposition theorems, which are both fundamental results in K\"{a}hler geometry. 

After this, we digress to introduce some machinery from the theory of spectral sequences, which will allow us to give a weakened version of the Hodge decomposition that does not require the K\"{a}hler assumption. Finally, we conclude with a discussion of polarized Hodge structures, which are of vital importance to the study of variations of Hodge structure in Sect. \ref{sect:vhs}.

Throughout this section we will refer frequently to Part II of the book by Voisin \cite[Chaps. 5-8]{htcagI}, which we use as our main reference; complete proofs of all theorems may be found therein.

We begin with the definition of a (pure) Hodge structure:

\begin{definition} A (\emph{pure}) \emph{Hodge structure of weight $n \in \Z$}, denoted $(H_{\Z},H^{p,q})$, consists of a finitely generated free abelian group $H_{\Z}$ (a \emph{lattice}) along with a decomposition $H_{\C} = \bigoplus_{p+q = n} H^{p,q}$ of the complexification  $H_\CC := H_{\Z} \otimes_{\Z} \C$, which satisfies $H^{p,q} = \overline{H^{q,p}}$.
\end{definition}

Note that one can also speak of rational (respectively real) Hodge structures, obtained by replacing the lattice $H_{\Z}$ with a rational (respectively real) vector space. In fact, one can further generalize this notion so as to allow Hodge structures on $R$-modules $H_R$ of finite type, with $R \subset \mathbb{R}$ an arbitrary subring. 

\begin{example}
Defining $H_{\C} = H^{k,k}$ and $H^{p,q} = 0, (p, q) \neq (k, k)$, one obtains the simplest example of a Hodge structure, called a \emph{trivial Hodge structure of weight $2k$}.
\end{example}

\begin{example} Another simple Hodge structure is given by taking $H_{\Z} = 2\pi i \Z$ (considered as a subgroup of $\C$) and setting $H_{\C} = H^{-1,-1}$. This is a pure Hodge structure of weight $-2$ and, in fact, is the unique $1$-dimensional pure Hodge structure of weight $-2$ up to isomorphism. This Hodge structure is called the \emph{Tate Hodge structure} and is often denoted by $\Z (1)$.
\end{example}

It is also common to see a pure Hodge structure of weight $n$ defined by a decreasing filtration $\{F^p\}$ on $H_\CC$
\[ H_\CC = F^0 \supset F^1 \supset \cdots \supset F^n \supset \{0\}\]
such that $H_\CC \cong F^p \oplus \overline{F^{n-p+1}}$. The two definitions are completely equivalent: given a decomposition $H_\CC = \bigoplus_{p+q = n} H^{p,q}$ we may define a filtration by setting $F^p := H^{n,0} \oplus \cdots \oplus H^{p,n-p}$, and given a filtration $\{F^p\}$, we may define a decomposition by setting $H^{p,q} := F^p \cap \overline{F^q}$. The Hodge filtration will prove to be a useful reformulation when we come to study Hodge structures associated to compact K\"{a}hler varieties, as it varies holomorphically in families (see Sect. \ref{sect:locmap}). 
\medskip

To produce different Hodge structures starting from given ones, it is natural\footnote{A third characterization of Hodge structures is given in terms of certain representations of $\mathrm{Res}_{\CC/\RR} \CC^*$ (see, for example, \cite{vG00}). More precisely, a rational Hodge structure of weight $n$ on a $\QQ$-vector space $H$ can be identified with an algebraic representation $\rho\colon \C^* \ra GL(H_\RR)$, where $H_\RR:=H\otimes_\QQ\RR$, such that the restriction of $\rho$ to $\RR^*$ is given by $\rho(\lambda) = \lambda^n$. From this point of view, it is clearly completely natural to use constructions from multi-linear algebra to produce new Hodge structures.} to use the following multi-linear algebra constructions:
\begin{enumerate}
\item Let $(H_{\Z},H^{p,q})$, $(H'_{\Z},H'^{p,q})$ be two Hodge structures, both of weight $n$. Then we can define their direct sum by taking the underlying lattice to be the direct sum of the two lattices $H_{\Z} \oplus H'_{\Z}$, and the $(p, q)$-components to be the direct sums of the $(p, q)$-components of each term $H^{p,q} \oplus H'^{p,q}$. The direct sum is thus a Hodge structure of weight $n$.
\item The dual of a Hodge structure $(H_{\Z},H^{p,q})$ of weight $n$ is a Hodge structure of weight $-n$, defined by taking as underlying lattice the dual $H_\Z^\vee := \Hom (H_\Z, \Z)$ with the dual Hodge decomposition $(H^{\vee})^{p,q} = (H^{-p,-q})^\vee.$ 
\item Let $(H_{\Z},H^{p,q})$, $(H'_{\Z},H'^{p,q})$ be two Hodge structures of weight $n$ and $n'$ respectively. Then we can define their tensor product by taking as underlying lattice $H''_{\Z} = H_{\Z} \otimes H'_{\Z}$ and defining the Hodge decomposition on its complexification as:
\[
H''^{p,q} = \bigoplus_{\substack{r+r' = p\\ s+s'=q}} H^{r,s} \otimes H'^{r',s'}. 
\]
The tensor product is a Hodge structure of weight $n + n'$.
\item From the previous two constructions it immediately follows that, for Hodge structures $(H_{\Z},H^{p,q})$ and $(H'_{\Z},H'^{p,q})$ of weights $n$ and $n'$, we also have Hodge structures on $\Hom(H_\Z , H'_\Z ) = H_\Z^\vee \otimes H'_\Z$  of weight $n'-n$ and on $\mathrm{Sym}^k (H_\Z )$ and $\bigwedge^k H_\Z$, both of weight $kn$.
\end{enumerate}

\begin{example}
Starting from a Hodge structure $(H_{\Z}, H^{p,q})$ of weight $n$, we can produce a new Hodge structure of weight $n - 2r$, which is referred to as the \emph{$r$-th Tate twist} of the original Hodge structure, by setting
\[
H(r)_{\Z} = H_{\Z}, \quad H(r)^{p,q} = H^{p-r,q-r}. 
\]
\end{example}

\subsection{The Hodge Decomposition} \label{sect:hodgedecomp}

One of the main applications of Hodge structures is to the study of the cohomology of K\"{a}hler manifolds, via the \emph{Hodge decomposition}. This decomposition will be described in this section.

Begin by letting $X$ denote an $m$-dimensional Riemannian manifold. Denote the sheaf of smooth $n$-forms on $X$ by $\calA^n_X$ and let $d\colon \calA^n_X \to \calA^{n+1}_X$ be the exterior derivative. Define the \emph{Laplacian}
\[\Delta_d = d\delta + \delta d,\]
where $\delta\colon \calA^n_X \to \calA^{n-1}_X$ is the \emph{codifferential}, defined by $\delta = (-1)^{nm+m+1}*d*$, where $*$ denotes the Hodge star operator.

Next define the set of \emph{harmonic forms of degree $n$} to be
\[\calH^n(X) := \{\alpha \in \calA^n_X \mid \Delta_d\alpha = 0\}.\]
Then we have:

\begin{theorem}[Hodge's Theorem]  \textup{\cite[Thm. 5.23]{htcagI}} There is an isomorphism
\[\calH^n(X) \cong H^n(X,\RR).\]
\end{theorem}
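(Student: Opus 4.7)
The plan is to establish the Hodge orthogonal decomposition of smooth forms on $X$ (which must implicitly be assumed compact and oriented, since otherwise $\delta$ is not globally adjoint to $d$ and the decomposition fails in general), and then identify the harmonic summand with a canonical representative of each de Rham cohomology class.

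First I would equip the global sections of $\calA^n_X$ with the $L^2$ inner product $\langle \alpha, \beta \rangle = \int_X \alpha \wedge *\beta$, and verify using Stokes' theorem that the operator $\delta$ defined in the text is indeed the formal adjoint of $d$. This yields the key identity
$$\langle \Delta_d \alpha, \alpha\rangle = \|d\alpha\|^2 + \|\delta\alpha\|^2,$$
so a form is harmonic if and only if it is both closed and coclosed.

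The analytic heart of the argument is to check that $\Delta_d$ is an elliptic operator (its principal symbol is essentially the metric on $T^*X$), and then to invoke the Fredholm theory of self-adjoint elliptic operators on a compact manifold to obtain an orthogonal decomposition
$$\calA^n_X = \calH^n(X) \oplus \im(\Delta_d),$$
with $\calH^n(X)$ finite-dimensional. Using $d^2 = \delta^2 = 0$ together with the orthogonality of $\im(d)$ and $\im(\delta)$, this refines to the full Hodge decomposition
$$\calA^n_X = \calH^n(X) \oplus d(\calA^{n-1}_X) \oplus \delta(\calA^{n+1}_X).$$

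Intersecting the above with $\ker d$ and noting that any $\delta\gamma \in \ker d$ satisfies $\|\delta\gamma\|^2 = \langle\gamma, d\delta\gamma\rangle = 0$, I obtain $\ker d = \calH^n(X) \oplus d(\calA^{n-1}_X)$. Passing to the quotient yields an isomorphism $\calH^n(X) \cong H^n_{dR}(X,\RR)$, which de Rham's theorem identifies with $H^n(X,\RR)$. The main obstacle is the analytic step: establishing finite-dimensionality of $\calH^n(X)$ and the closed-range property for $\Delta_d$ genuinely requires PDE input (elliptic regularity and compactness of Sobolev embeddings on a compact manifold), so in practice one would defer to a standard reference such as Warner, Wells, or indeed Voisin's book cited in the text, rather than reproving it from scratch.
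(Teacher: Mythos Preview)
The paper does not actually prove this statement; it merely cites \cite[Thm.~5.23]{htcagI} and moves on, as announced at the start of the section. Your outline is exactly the standard argument found in Voisin (formal adjointness of $\delta$, ellipticity of $\Delta_d$, Fredholm theory yielding the orthogonal decomposition, and then the identification with de Rham cohomology), so there is nothing to compare: your sketch \emph{is} the referenced proof. You are also right to flag that compactness (and orientedness) of $X$ is being assumed implicitly---the paper's preamble only says ``$m$-dimensional Riemannian manifold,'' but without compactness the $L^2$ adjoint relation and the elliptic Fredholm package do not apply and the statement is false as written.
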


To exploit the full power of Hodge's Theorem, we need to consider a complex manifold $X$ endowed with a Hermitian metric. In this setting we may decompose the sheaf of complex $n$-forms $\calA^n_X$ into a direct sum of sheaves of $(p,q)$-forms
\[\calA^n_X = \bigoplus_{p+q=n} \calA^{p,q}_X\]
and write $d = \partial + \overline{\partial}$, where $\partial\colon \calA^{p,q}_X \to \calA^{p+1,q}_X$ and $\overline{\partial}\colon \calA^{p,q}_X \to \calA^{p,q+1}_X$ are the \emph{Dolbeault operators}. As we defined the Laplacian, we may also define operators $\Delta_{\partial}$ and $\Delta_{\overline{\partial}}$, and these two operators both preserve the bidegree given by the decomposition of complex $n$-forms into $(p,q)$-forms. However it is important to note that, on an arbitrary complex manifold with Hermitian metric, the two operators $\Delta_{\partial}$ and $\Delta_{\overline{\partial}}$ are not necessarily related to the Laplacian $\Delta_d$, and $\Delta_d$ does not necessarily preserve the bidegree. 

To rectify this, we further restrict our attention to \emph{K\"{a}hler manifolds}. A Hermitian metric on a complex manifold $X$ is said to be \emph{K\"{a}hler} if its imaginary part $\omega$, which is a $(1,1)$-form on $X$, is closed. A complex manifold $X$ equipped with a K\"{a}hler metric is called a \emph{K\"{a}hler manifold} and the $2$-form $\omega$ on $X$ is called the associated \emph{K\"{a}hler form}. 

The extraordinariness of K\"{a}hler manifolds from the point of view of Hodge theory relies on the fact that, on a K\"{a}hler manifold, the operator $\Delta_d$ preserves the bidegree. Indeed, via the \emph{Hodge identities} \cite[Prop. 6.5]{htcagI}, we obtain:

\begin{theorem}  \textup{\cite[Thm. 6.7]{htcagI}} If $X$ is a K\"{a}hler manifold, then
\[\Delta_d = 2\Delta_{\partial}= 2\Delta_{\overline{\partial}}.\]
\end{theorem}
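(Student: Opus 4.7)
The plan is to derive both equalities formally from the Kähler identities cited just above (Voisin's Prop. 6.5), which read
\[ [\Lambda, \partial] = i\overline{\partial}^*, \qquad [\Lambda, \overline{\partial}] = -i\partial^*, \]
where $\Lambda$ denotes the formal adjoint of the Lefschetz operator $L = \omega \wedge \cdot$ and $\partial^*$, $\overline{\partial}^*$ are the formal adjoints of $\partial$ and $\overline{\partial}$ with respect to the Hermitian metric. (Up to a sign convention these are the pieces into which the codifferential $\delta = d^*$ decomposes under $d = \partial + \overline{\partial}$.)

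First I would expand $\Delta_d = dd^* + d^*d$ using $d = \partial + \overline{\partial}$ and $d^* = \partial^* + \overline{\partial}^*$, grouping terms by bidegree:
\[ \Delta_d \;=\; \Delta_\partial + \Delta_{\overline{\partial}} + \bigl(\partial\overline{\partial}^* + \overline{\partial}^*\partial\bigr) + \bigl(\overline{\partial}\partial^* + \partial^*\overline{\partial}\bigr). \]
The task reduces to showing the two cross-terms vanish and that $\Delta_\partial = \Delta_{\overline{\partial}}$.

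For the cross-terms, I would substitute $\overline{\partial}^* = -i[\Lambda,\partial]$ into the first bracket and expand. Two of the resulting four terms are proportional to $\partial^2$ and vanish immediately, while the remaining two cancel in pairs; the analogous manipulation with $\partial^* = i[\Lambda,\overline{\partial}]$ and $\overline{\partial}^2 = 0$ eliminates the second bracket. This yields $\Delta_d = \Delta_\partial + \Delta_{\overline{\partial}}$.

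For the equality $\Delta_\partial = \Delta_{\overline{\partial}}$, I would again insert the Kähler identities into both Laplacians and expand. After the obvious cancellations, the difference collapses to
\[ \Delta_\partial - \Delta_{\overline{\partial}} \;=\; i\bigl[\Lambda,\, \partial\overline{\partial} + \overline{\partial}\partial\bigr]. \]
Decomposing $0 = d^2 = \partial^2 + (\partial\overline{\partial} + \overline{\partial}\partial) + \overline{\partial}^2$ by bidegree forces $\partial\overline{\partial} + \overline{\partial}\partial = 0$, so the commutator vanishes. Combined with the previous step, this gives $\Delta_d = 2\Delta_\partial = 2\Delta_{\overline{\partial}}$. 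The entire obstacle is bookkeeping — carefully tracking signs and operator orderings through the commutator expansions. No analytic input beyond the Kähler identities themselves is required, which is precisely why those identities are the crux of the theorem: they reduce the nontrivial analytic statement that $\Delta_d$ preserves bidegree to a purely algebraic manipulation.
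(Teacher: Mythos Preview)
Your proposal is correct and follows exactly the route the paper indicates: the paper does not spell out the proof but simply invokes the Hodge identities \cite[Prop.~6.5]{htcagI}, and the standard derivation from those identities is precisely the algebraic manipulation you outline. Your bookkeeping of the cross-terms and of the difference $\Delta_\partial - \Delta_{\overline{\partial}} = i[\Lambda,\partial\overline{\partial}+\overline{\partial}\partial]$ is accurate.
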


As an easy corollary, we find:

\begin{corollary}  \textup{\cite[Cor. 6.9]{htcagI}} If $\alpha$ is a harmonic form of degree $n$ on a K\"{a}hler manifold $X$, then the components of $\alpha$ of type $(p,q)$ are also harmonic.
\end{corollary}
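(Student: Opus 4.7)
The plan is to leverage the key identity $\Delta_d = 2\Delta_{\overline{\partial}}$ from the preceding theorem, together with the fact (noted in the text) that $\Delta_{\overline{\partial}}$ preserves the bidegree on the decomposition $\calA^n_X = \bigoplus_{p+q=n} \calA^{p,q}_X$. The combination of these two facts immediately tells us that, on a K\"ahler manifold, $\Delta_d$ itself preserves bidegree, which is the crucial ingredient.

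First I would write the harmonic form $\alpha \in \calH^n(X)$ in its bidegree decomposition $\alpha = \sum_{p+q=n} \alpha^{p,q}$, with $\alpha^{p,q} \in \calA^{p,q}_X$. Applying $\Delta_d$ and using the theorem to replace $\Delta_d$ with $2\Delta_{\overline{\partial}}$, I get
\[
0 = \Delta_d \alpha = 2\Delta_{\overline{\partial}}\alpha = \sum_{p+q=n} 2\Delta_{\overline{\partial}}\alpha^{p,q},
\]
and since $\Delta_{\overline{\partial}}$ preserves the bidegree, each summand $2\Delta_{\overline{\partial}}\alpha^{p,q}$ lies in $\calA^{p,q}_X$. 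Because the decomposition $\calA^n_X = \bigoplus_{p+q=n}\calA^{p,q}_X$ is a direct sum, the vanishing of the total sum forces each summand to vanish individually: $\Delta_{\overline{\partial}}\alpha^{p,q} = 0$. Applying the theorem once more in the reverse direction yields $\Delta_d \alpha^{p,q} = 2\Delta_{\overline{\partial}}\alpha^{p,q} = 0$, so each component $\alpha^{p,q}$ is harmonic, as required.

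There is really no substantive obstacle here; the corollary is a direct formal consequence of the two ingredients (bidegree preservation of $\Delta_{\overline{\partial}}$ and the K\"ahler identity relating the three Laplacians). The only subtlety worth flagging is the appeal to the direct-sum decomposition to pass from the vanishing of the sum to the vanishing of each $(p,q)$-component, and this is automatic from the definition of the bidegree decomposition of complex $n$-forms.
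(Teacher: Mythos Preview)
Your argument is correct and is exactly the natural one: the paper does not spell out a proof here but simply cites Voisin and labels the statement ``an easy corollary'' of the identity $\Delta_d = 2\Delta_{\overline{\partial}}$, which is precisely what your computation makes explicit. There is nothing to add.
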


Thus, we may decompose
\[\calH^n(X) = \bigoplus_{p+q=n}\calH^{p,q}(X),\]
where $\calH^{p,q}(X)$ denotes the space of harmonic forms of type $(p,q)$, and this decomposition satisfies $\calH^{p,q}(X) = \overline{\calH^{q,p}(X)}$ \cite[Cor. 6.10]{htcagI}.

Finally, if $X$ is also assumed to be compact, then we have $\calH^n(X) \cong H^n(X,\C)$ and the decomposition of $\calH^n(X)$ into $\calH^{p,q}(X)$ induces a decomposition 
\[H^n(X,\C) = \bigoplus_{p+q=n}H^{p,q}(X).\]
Via Dolbeault's isomorphism, it can be shown that $H^{p,q}(X) \cong H^q(X,\Omega^p_X)$, where $\Omega^p_X$ is the sheaf of holomorphic $p$-forms on $X$ (we refer the interested reader to \cite[Lemma 6.18]{htcagI} for details). We thus find:

\begin{theorem}[Hodge Decomposition] \textup{\cite[Sect. 6.1.3]{htcagI}} \label{hodgedecomp} Let $X$ be a compact K\"{a}hler manifold. Then there exists a decomposition
\[H^n(X,\C) = \bigoplus_{p+q=n}H^{p,q}(X),\]
where $H^{p,q}(X) \cong H^q(X,\Omega^p_X)$ and $H^{p,q}(X) = \overline{H^{q,p}(X)}$. 
\end{theorem}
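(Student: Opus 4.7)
The plan is to assemble the Hodge Decomposition by stringing together the three key inputs the excerpt has already established: Hodge's theorem, the Kähler identity $\Delta_d = 2\Delta_{\overline{\partial}}$, and Dolbeault's isomorphism. First, I would complexify Hodge's theorem to obtain $\calH^n(X) \otimes_{\RR} \CC \cong H^n(X,\CC)$, where now $\calH^n(X)$ is understood to denote $\CC$-valued harmonic $n$-forms. This uses compactness of $X$ implicitly through the elliptic theory underlying Hodge's theorem (finite-dimensionality of the harmonic space and the Hodge orthogonal decomposition $\calA^n_X = \calH^n(X) \oplus d\calA^{n-1}_X \oplus \delta\calA^{n+1}_X$).

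Next, I would define $H^{p,q}(X) \subset H^n(X,\CC)$ as the image of $\calH^{p,q}(X)$ under the Hodge isomorphism. The Kähler hypothesis enters here in an essential way: by the theorem $\Delta_d = 2\Delta_{\overline{\partial}}$, the Laplacian $\Delta_d$ preserves bidegree, so by the corollary above we obtain the pointwise decomposition
\[
\calH^n(X) = \bigoplus_{p+q=n} \calH^{p,q}(X),
\]
and transporting this through Hodge's isomorphism yields the desired direct sum decomposition of $H^n(X,\CC)$. The complex-conjugation symmetry $H^{p,q}(X) = \overline{H^{q,p}(X)}$ is inherited directly from the corresponding symmetry $\calH^{p,q}(X) = \overline{\calH^{q,p}(X)}$ noted in the text, since complex conjugation on forms descends to complex conjugation on cohomology (the lattice $H^n(X,\ZZ) \subset H^n(X,\CC)$ is fixed by it).

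Finally, I would identify $H^{p,q}(X)$ with $H^q(X,\Omega^p_X)$. Since $\Delta_d = 2\Delta_{\overline{\partial}}$ on a Kähler manifold, a $(p,q)$-form is $\Delta_d$-harmonic iff it is $\Delta_{\overline{\partial}}$-harmonic, so $\calH^{p,q}(X)$ coincides with the space of $\overline{\partial}$-harmonic $(p,q)$-forms. Applying the $\overline{\partial}$-analogue of Hodge's theorem gives $\calH^{p,q}(X) \cong H^{p,q}_{\overline{\partial}}(X)$, and Dolbeault's theorem identifies the right-hand side with $H^q(X,\Omega^p_X)$, as stated in \cite[Lemma 6.18]{htcagI}.

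The delicate point, and what I would regard as the main conceptual obstacle, is showing that the decomposition $H^n(X,\CC) = \bigoplus_{p+q=n} H^{p,q}(X)$ is canonical, i.e.\ independent of the choice of Kähler metric used to define harmonic representatives. The cleanest way to handle this is to observe that the subspaces $H^{p,q}(X)$ are characterized intrinsically as $H^q(X,\Omega^p_X)$ via the above identification, and so the resulting decomposition is metric-independent; alternatively, one can describe $F^p H^n(X,\CC) = \bigoplus_{r \geq p} H^{r,n-r}(X)$ as the image of the hypercohomology spectral sequence associated to the stupid filtration on the holomorphic de Rham complex, which is manifestly intrinsic. Either reformulation also makes transparent why $H^{p,q}(X)$ deserves the name attached to it, completing the proof.
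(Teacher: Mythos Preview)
Your proposal is correct and follows essentially the same route the paper sketches: Hodge's theorem to identify cohomology with harmonic forms, the K\"{a}hler identity $\Delta_d = 2\Delta_{\overline{\partial}}$ to decompose harmonic forms by bidegree with the conjugation symmetry, and Dolbeault's isomorphism (via \cite[Lemma 6.18]{htcagI}) to identify $H^{p,q}(X)$ with $H^q(X,\Omega^p_X)$. Your closing paragraph on metric-independence goes beyond what the paper outlines and is a welcome addition, though note that your first argument for canonicity (appealing to the intrinsic description as $H^q(X,\Omega^p_X)$) is slightly circular as stated, since the identification $\calH^{p,q}(X) \cong H^q(X,\Omega^p_X)$ itself passes through the metric-dependent $\overline{\partial}$-harmonic theory; the cleaner argument is that $H^{p,q}(X)$ can be characterized directly inside $H^n(X,\CC)$ as the set of de Rham classes admitting a $d$-closed representative of pure type $(p,q)$, which is manifestly metric-free.
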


From this theorem, we see that if we let $H_{\Z}(X):=H^n(X,\Z)/\mathrm{torsion}$, then the data $(H_{\Z}(X),H^{p,q}(X))$ defines a pure Hodge structure of weight $n$. The integers $h^{p,q}(X) = \dim_\C H^{p,q}(X)$ are called the \emph{Hodge numbers} of $X$. Note that the Hodge decomposition implies that $h^{p,q}(X) = h^{q,p}(X)$ and the $n$th Betti number $b_n(X) = \sum_{p+q=n}h^{p,q}(X)$.

The Hodge Decomposition Theorem immediately constrains the cohomology of a K\"{a}hler manifold, as exhibited by the following:

\begin{corollary} \textup{\cite[Cor. 6.13]{htcagI}}
For every compact K\"{a}hler manifold $X$, the odd Betti numbers $b_{2k-1}(X)$ are even.
\end{corollary}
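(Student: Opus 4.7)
The plan is to read off the conclusion directly from the Hodge Decomposition (Theorem \ref{hodgedecomp}) combined with the conjugation symmetry $H^{p,q}(X) = \overline{H^{q,p}(X)}$. The key observation is that when $n$ is odd, no summand $H^{p,q}$ with $p+q=n$ can have $p=q$, so the Hodge summands come in genuinely distinct conjugate pairs.

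Concretely, I would proceed as follows. First, apply Theorem \ref{hodgedecomp} with $n = 2k-1$ to obtain
\[ H^{2k-1}(X,\C) = \bigoplus_{p+q = 2k-1} H^{p,q}(X), \]
so that taking complex dimensions yields
\[ b_{2k-1}(X) = \sum_{p+q=2k-1} h^{p,q}(X). \]
Second, I would use the relation $H^{p,q}(X) = \overline{H^{q,p}(X)}$ recorded just after the theorem, which implies $h^{p,q}(X) = h^{q,p}(X)$. Third, since $2k-1$ is odd, the indexing set $\{(p,q) : p+q = 2k-1, \ p,q \geq 0\}$ contains no diagonal pair, and thus splits as a disjoint union of two-element orbits $\{(p,q),(q,p)\}$ under the involution swapping the entries. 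Grouping the sum accordingly gives
\[ b_{2k-1}(X) = \sum_{\substack{p+q=2k-1 \\ p<q}} \bigl( h^{p,q}(X) + h^{q,p}(X) \bigr) = 2 \sum_{\substack{p+q=2k-1 \\ p<q}} h^{p,q}(X), \]
which is manifestly even.

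There is no real obstacle here: the entire content is already encoded in Theorem \ref{hodgedecomp}, and the argument is a pure bookkeeping exercise. The only thing to be mildly careful about is to phrase the pairing without accidentally double-counting or including a fictitious diagonal term — which is exactly why the parity of $n$ matters, and why the analogous statement fails in even degree (where $h^{k,k}$ can be odd, e.g.\ $h^{1,1} = 1$ on $\PP^1$).
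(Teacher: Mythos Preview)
Your proof is correct and is exactly the argument the paper has in mind: the corollary is stated immediately after Theorem \ref{hodgedecomp} and the observation that $b_n(X) = \sum_{p+q=n} h^{p,q}(X)$ with $h^{p,q}(X) = h^{q,p}(X)$, and the paper gives no further proof beyond this setup. Your pairing of conjugate Hodge summands, noting that no diagonal term $(p,p)$ occurs when $p+q$ is odd, is precisely the intended one-line deduction.
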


The Hodge numbers of a compact K\"{a}hler manifold $X$ are frequently displayed in the \emph{Hodge diamond}:
\[\begin{array}{ccccccc} & & & h^{0,0}(X) & & & \\
& & h^{1,0}(X) & & h^{0,1}(X) & & \\
&\iddots & & \vdots & & \ddots & \\
h^{m,0}(X) & & h^{m-1,1}(X) & \cdots  & h^{1,m-1}(X) & & h^{0,m}(X) \\
& \ddots & & \vdots & & \iddots & \\
& & h^{m,m-1}(X) & & h^{m-1,m}(X) & & \\
& & & h^{m,m}(X) & & &
\end{array}\]
where $m = \dim_{\C}(X)$.

\begin{example} Let $C$ be a curve of genus $g$. Then it is easy to see that the Hodge diamond of $C$ is given by
\[\begin{array}{ccc} & h^0(C,\calO_C) &  \\
h^0(C,\omega_C) & & h^1(C,\calO_C)  \\
& h^1(C,\omega_C) & 
\end{array}
\quad =\quad
\begin{array}{ccc} & 1 &  \\
g & & g  \\
& 1 & 
\end{array}\]
\end{example}

\begin{example} \label{K3Hodgeex} Now let $S$ be a K3 surface, i.e.\ a smooth compact complex surface with trivial canonical bundle and $h^1(S,\calO_S) = 0$. The Hodge diamond of $S$ is calculated in \cite[Prop. VIII.3.4]{bpv}, giving
\[\begin{array}{ccccc} & & h^0(S,\calO_S) & &  \\
& h^0(S,\Omega^1_S) & & h^1(S,\calO_S) &  \\
h^0(S,\omega_S) & & h^1(S,\Omega^1_S) && h^2(S,\calO_S) \\
& h^1(S,\omega_S) & & h^2(S,\Omega^1_S) & \\
& & h^2(S,\omega_S) & & 
\end{array}
=
\begin{array}{ccccc} & & 1 & & \\
& 0 & & 0 &  \\
1 & & 20  & & 1 \\
& 0 & & 0 & \\
& & 1 & &  
\end{array}\]
\end{example}

\begin{example} \label{CY3Hodgeex} Finally, let $X$ be a Calabi-Yau threefold, i.e.\ a smooth compact complex K\"ahler threefold with trivial canonical bundle and $h^i (X, \calO_X) = 0$ for $0 < i < 3$. From the definition of $X$ and Serre duality, we obtain the following Hodge diamond:
\[\begin{array}{ccccccc}
& & & 1 & & & \\
& & 0 &  & 0 & & \\
& 0\  & & h^{1,1}(X) & &\  0 & \\
1\quad &  & h^{2,1}(X) & & h^{2,1}(X) & &\quad 1\\
& 0\  & & h^{1,1}(X) & &\ 0 & \\
& & 0 & & 0 & & \\
& & & 1 & & &
 \end{array}
\]
There are two Hodge numbers in the centre of this diamond that are not determined by the general definition of a Calabi-Yau threefold. They can be interpreted as follows.

From the vanishing $h^{1}(X,\mathcal{O}_{X})=h^{2}(X,\mathcal{O}_X)=0$ and the exponential sheaf sequence, we get an abelian group isomorphism between $\mathrm{Pic}(X)=H^{1}(X,\mathcal{O}_{X}^{*})$ and $H^{2}(X,\mathbb{Z})$. Thus, by the Hodge decomposition, the Hodge number $h^{1,1}(X)$ is equal to the Picard number $\rho(X) = \dim (\mathrm{Pic}(X))$. 

By Serre duality, we see that $h^{2,1}(X)=h^{1}(X,\mathcal{T}_{X})$, where $\mathcal{T}_X =(\Omega_X)^{*}$ is the tangent bundle of $X$. Thus, by Kodaira-Spencer theory, the Hodge number $h^{2,1}(X)$ is equal to the dimension of the space of first order infinitesimal complex deformations of $X$. 

A well-known consequence of the mirror symmetry conjecture is that, for a mirror pair of Calabi-Yau threefolds $X$ and $\check{X}$, these two Hodge numbers are interchanged:
\[h^{2,1}(X) = h^{1,1}(\check{X}),  \qquad h^{1,1}(X) = h^{2,1}(\check{X}).\]
\end{example}

Note that the symmetries of the Hodge diamonds above are the result of general phenomena: the left-right symmetry is a consequence of the equality $h^{p,q}(X) = h^{q,p}(X)$, whereas the top-bottom symmetry is a consequence of Serre duality, which implies that $h^{p,q}(X)= h^{m-p,m-q}(X)$ for an $m$-dimensional compact K\"{a}hler manifold $X$ (see \cite[Sect. 5.3.2]{htcagI}).

\subsection{Morphisms of Hodge Structures}

We define a morphism of Hodge structures as follows:

\begin{definition} Let $(V_\Z,V^{p,q})$ and $(W_\Z,W^{p,q})$ denote two Hodge structures of weights $n$ and $n + 2r$ respectively (for some $r \in \Z$). Then a \emph{morphism of Hodge structures} of bidegree $(r,r)$ is a group homomorphism $\phi\colon V_\Z \to W_\Z$ such that $\phi(V^{p,q}) \subset W^{p+r,q+r}$ (or, equivalently, in terms of the Hodge filtrations, $\phi(F^p V_\C) \subset F^{p+r}W_\C$).
\end{definition}

\begin{example} 
Let $X$ and $Y$ be two compact K\"{a}hler manifolds and let $f\colon X \to Y$ be a holomorphic map. Then $f^*\colon H^n(Y,\Z) \to H^n(X,\Z)$ is a morphism of Hodge structures of bidegree $(0,0)$. 

The \emph{Gysin morphism} $f_*\colon H^n(X,\Z) \to H^{n-2r}(Y,\Z)$ is also a morphism of Hodge structures of bidegree $(r,r)$, where $r = \dim_\C(Y) - \dim_\C(X)$ \cite[Sect. 7.3.2]{htcagI}.
\end{example}

\begin{remark}
For every Hodge structure $(H_\Z, H^{p,q})$ of weight $2k-1$ with associated Hodge filtration $F^{\bullet}H_{\C}$ we can define a complex torus  by
\[
J^k(H) = \frac{H_\C}{F^k H_\C \oplus H_\Z}.
\]
Given a morphism of Hodge structures, we get an induced  morphism of complex tori. Appying this construction to $H^{2k-1}(X, \C)$, where $X$ is a compact K\"ahler manifold, we obtain, for each $k>0$, the \emph{Intermediate Jacobian}:
\[
J^k(X) = \frac{H^{2k-1}(X,\C)}{F^k H^{2k-1}(X, \C) \oplus H^k(X, \Z)},
\]
which is indeed a complex torus \cite[Sect. 12.1]{htcagI}.
\end{remark}

The following result tells us how the Hodge filtration behaves under morphisms of Hodge structures:

\begin{lemma} \textup{\cite[Lemma 7.23]{htcagI}}
A morphism of Hodge structures $\phi\colon V_\Z \to W_\Z$ is strict for the Hodge filtration, i.e. $\im \phi \cap F^{p+r}W_{\C}= \phi(F^{p}V_{\C}).$
\end{lemma}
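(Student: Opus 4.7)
The plan is to split the equality into two inclusions. One direction is immediate from the definition: since $\phi$ has bidegree $(r,r)$, it takes $F^{p}V_{\C}$ into $F^{p+r}W_{\C}$, so $\phi(F^{p}V_{\C}) \subset \im\phi \cap F^{p+r}W_{\C}$. All the work is in the reverse inclusion.

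To prove the reverse inclusion, I would fix $w \in \im \phi \cap F^{p+r}W_{\C}$ and pick any preimage $v \in V_{\C}$ with $\phi(v)=w$. Using the Hodge decomposition, I would write
\[ v = \sum_{a+b = n} v^{a,b}, \qquad v^{a,b} \in V^{a,b}. \]
Applying $\phi$ and using that $\phi(V^{a,b}) \subset W^{a+r,b+r}$, I get a decomposition of $w$ into components $\phi(v^{a,b}) \in W^{a+r,b+r}$ which sits inside the Hodge decomposition of $W_{\C}$.

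Next, since $w \in F^{p+r}W_{\C} = \bigoplus_{a' \geq p+r} W^{a',\,n+2r-a'}$, uniqueness of the Hodge decomposition on $W_{\C}$ forces $\phi(v^{a,b}) = 0$ for every pair $(a,b)$ with $a < p$. I would then define the truncated lift
\[ v' := \sum_{a \geq p} v^{a,b}, \]
which lies in $F^{p}V_{\C}$ by definition of the Hodge filtration. Since the discarded components $v^{a,b}$ with $a<p$ are annihilated by $\phi$, we have $\phi(v') = \phi(v) = w$, so $w \in \phi(F^{p}V_{\C})$, as required.

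The argument is essentially a one-step truncation; the only subtle point, which I would emphasize, is that strictness depends crucially on the fact that morphisms of Hodge structures respect the finer $(p,q)$-decomposition (not merely the filtration). Without this, one could not kill the low-$a$ components of the preimage individually. No deeper input is needed beyond the definition of a morphism of Hodge structures and the equivalence between the Hodge decomposition and the Hodge filtration recalled earlier in the text.
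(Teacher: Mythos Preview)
Your argument is correct and is precisely the standard proof: split into two inclusions, use the bidegree condition for the easy one, and for the reverse inclusion decompose a preimage into its $(a,b)$-components, observe that the components with $a<p$ map to zero by uniqueness of the Hodge decomposition on $W_{\C}$, and truncate. Note that the paper itself does not supply a proof of this lemma---it simply cites \cite[Lemma 7.23]{htcagI}---and the argument given there is exactly the one you wrote down, so there is nothing further to compare.
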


With the help of this lemma we can define a Hodge structure on the image of a morphism of Hodge structures: One takes the quotient filtration induced by $F^\bullet V_{\C}$, which coincides with the one inherited by $\im \phi$ from $F^{\bullet + r} W_{\C}$ \cite[Cor. 7.24]{htcagI}. It can further be shown that the kernels and cokernels of morphisms of Hodge structures are indeed Hodge (sub-)structures defined by the induced filtration \cite[Sect. 7.3.1]{htcagI}.

Restricting to the case of rational Hodge structures, we have the following:
\begin{definition}
A rational \emph{Hodge substructure} of $(H_{\mathbb{Q}}, H^{p,q})$ is a $\mathbb{Q}$-vector subspace $W_{\mathbb{Q}}$ of $H_{\mathbb{Q}}$ such that the decomposition of its complexification $W_{\C} = W_{\mathbb{Q}} \otimes_{\mathbb{Q}} \C$ satisfies:
\[W_{\C} = \bigoplus_{p+q=n} (W_{\C} \cap H^{p,q})\]
\end{definition}
The Hodge substructure on $W$ is said to be of weight $l$ if $F^{l+1} W = 0$. In particular, a Hodge substructure can have lower weight than the Hodge structure it is contained in.

\begin{remark}
With the help of the previous results on morphisms and the operations on Hodge structures defined above, it can be seen that the category of rational Hodge structures of a given weight, with morphisms given by morphisms of Hodge structures of type $(0,0)$, is an abelian category \cite[Sect. 7.3.1]{htcagI}.
\end{remark}

\subsection{The Lefschetz Decomposition}

In this section, we digress briefly to discuss a second important decomposition of the cohomology of a compact K\"{a}hler manifold: the \emph{Lefschetz decomposition}. Suppose that $X$ is a compact K\"{a}hler manifold and let $\eta \in H^k(X,\Z)$. Then $\eta$ induces a map $\eta\colon H^n(X,\Z) \to H^{k+n}(X,\Z)$, via the cup-product.

Let $\omega$ denote the K\"{a}hler form on $X$. Then $[\omega] \in H^2_{\mathrm{dR}}(X)$, the second de Rham cohomology of $X$. The cup-product with $\omega$ thus induces a map
\[L \colon H^n(X,\RR) \longrightarrow H^{n+2}(X,\RR).\]
We have:

\begin{theorem}[Hard Lefschetz] \textup{\cite[Sect. 6.2.3]{htcagI}} Let $X$ be a compact K\"{a}hler manifold of dimension $m$. Then
\[L^{m-n}\colon H^n(X,\RR) \longrightarrow H^{2m-n}(X,\RR)\]
is an isomorphism. Furthermore, if $n \leq j \leq m$, then
\[L^{m-j}\colon H^n(X,\RR) \longrightarrow H^{2m+n-2j}(X,\RR)\]
is injective.
\end{theorem}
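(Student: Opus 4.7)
The plan is to promote the K\"ahler form to part of an $\mathfrak{sl}_2(\RR)$-triple acting on $H^*(X,\RR)$ and then deduce both claims from the structure theory of finite-dimensional $\mathfrak{sl}_2$-representations. First I would introduce on the graded sheaf $\calA^*_X$ of complex forms the Lefschetz operator $L(\alpha) := \omega \wedge \alpha$, its formal adjoint $\Lambda := L^*$ with respect to the Hermitian inner product induced by the K\"ahler metric, and the degree operator $H$ acting as multiplication by $(k-m)$ on $\calA^k_X$. A direct, pointwise computation on forms yields
\[ [L,\Lambda] = H, \qquad [H,L] = 2L, \qquad [H,\Lambda] = -2\Lambda, \]
so that $L$, $\Lambda$, $H$ generate an $\mathfrak{sl}_2(\RR)$-action on $\calA^*_X$.

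To descend this action to cohomology, I would invoke the K\"ahler identities \cite[Prop. 6.5]{htcagI}, which imply that $L$ and $\Lambda$ each commute with the Laplacian $\Delta_d$; the same is trivially true for $H$, which acts by a scalar on each $\calA^k_X$. Hence all three operators preserve the space $\calH^*(X)$ of harmonic forms, and Hodge's Theorem then identifies $H^*(X,\RR) = \bigoplus_n H^n(X,\RR)$ with a finite-dimensional real $\mathfrak{sl}_2$-representation in which $H^n(X,\RR)$ is precisely the weight $(n-m)$ subspace. The K\"ahler hypothesis is essential at this step, and verifying that $L$ and $\Lambda$ commute with $\Delta_d$ will be the main technical obstacle; once that is in hand, the rest is formal representation theory.

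The conclusion then comes from the classification of finite-dimensional $\mathfrak{sl}_2(\RR)$-representations. Decompose $H^*(X,\RR)$ into irreducibles $V_\ell$ of highest weight $\ell$; on $V_\ell$ the operator $L^k$ sends the weight $w$ space isomorphically onto the weight $(w+2k)$ space provided $w+2k \leq \ell$, and vanishes otherwise. Any $V_\ell$ meeting $H^n(X,\RR)$ (with $n \leq m$) contains weight $n-m$, hence satisfies $\ell \geq m-n$, which is exactly the condition $w+2k \leq \ell$ for $w = n-m$, $k = m-n$; thus $L^{m-n}\colon H^n(X,\RR) \to H^{2m-n}(X,\RR)$ is an isomorphism on each $V_\ell$ and therefore globally, and the case $n \geq m$ follows by symmetry. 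For the injectivity assertion, with $n \leq j \leq m$ and $k = m - j$ the corresponding condition becomes $m+n-2j \leq \ell$, which is implied by $\ell \geq m-n$ together with the inequality $n \leq j$ (i.e.\ $m+n-2j \leq m-n$); hence $L^{m-j}$ is injective on each irreducible component of $H^n(X,\RR)$, and therefore on the whole group.
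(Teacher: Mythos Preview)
Your argument is correct and is precisely the standard $\mathfrak{sl}_2$-representation proof. Note, however, that the paper does not supply its own proof of Hard Lefschetz: it merely states the result with a citation to \cite[Sect.~6.2.3]{htcagI}. The approach you outline---building the triple $(L,\Lambda,H)$, using the K\"ahler identities to show these operators commute with $\Delta_d$ and hence act on harmonic forms, and then invoking the weight-space structure of finite-dimensional $\mathfrak{sl}_2$-modules---is exactly the argument given at the cited location in Voisin, so there is nothing to compare: your proposal coincides with the referenced proof.

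One small expository point: your aside ``the case $n \geq m$ follows by symmetry'' is unnecessary, since the statement as written already presupposes $n \leq m$ (otherwise $L^{m-n}$ is not defined); you can simply drop that clause.
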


The Hard Lefschetz Theorem provides us with further constraints on the topology of K\"{a}hler manifolds:

\begin{corollary} The odd Betti numbers $b_{2k-1}(X)$ increase for $2k - 1 < n$ and, similarly, the even Betti numbers $b_{2k}(X)$ increase for $2k < n$.
\end{corollary}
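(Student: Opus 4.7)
The plan is to deduce the corollary directly from the injectivity half of Hard Lefschetz, by specializing the exponent to $1$. Writing $m = \dim_\C X$, the idea is that once we establish that the cup-product map $L \colon H^k(X,\RR) \to H^{k+2}(X,\RR)$ is injective for every $k < m$, we immediately obtain $b_k(X) \leq b_{k+2}(X)$ in that range, and restricting to odd or to even values of $k$ yields the two stated chains of inequalities.

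To produce this injectivity, I would invoke the second assertion of Hard Lefschetz with the parameters $n = k$ and $j = m - 1$. The hypothesis $n \leq j \leq m$ reduces to $k \leq m - 1$, and under this constraint the map
\[ L^{m-j} = L \colon H^k(X,\RR) \lra H^{2m + k - 2(m-1)}(X,\RR) = H^{k+2}(X,\RR) \]
is injective. Taking real dimensions then gives $b_k(X) \leq b_{k+2}(X)$. Specializing to $k = 2\ell - 1$ with $2\ell - 1 < m$ produces the monotonicity of the odd Betti numbers, and to $k = 2\ell$ with $2\ell < m$ produces that of the even Betti numbers.

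There is no real obstacle here beyond bookkeeping with indices: one checks that the exponent $m - j$ and the target degree $2m + n - 2j$ in the Hard Lefschetz injectivity statement collapse to $1$ and $k + 2$ respectively when $j = m - 1$, and that the condition $n \leq j$ translates correctly into the advertised range $k < m$. All the substantive content is already contained in Hard Lefschetz; the corollary is then a one-line consequence of the fact that an injective linear map can only increase dimension.
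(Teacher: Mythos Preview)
Your argument is correct. The paper itself does not supply a proof of this corollary; it simply records it as a direct consequence of Hard Lefschetz, and your specialization $j = m-1$ in the injectivity statement is exactly the intended (and essentially only) way to extract it. One minor remark: the paper's statement of the corollary uses $n$ for the dimension whereas the preceding theorem uses $m$; you have silently and correctly resolved this notational clash by working with $m = \dim_\C X$ throughout.
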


Next, define:

\begin{definition} Let $X$ be a compact K\"{a}hler manifold of dimension $m$. Define the \emph{$n$th primitive cohomology of $X$} by
\[P^n(X,\RR) := \ker(L^{m-n+1}\colon H^n(X,\RR) \longrightarrow H^{2m-n+2}(X,\RR)).\]
\end{definition}

Then we have:

\begin{theorem}[Lefschetz Decomposition] \textup{\cite[Cor. 6.26]{htcagI}} Let $X$ be a compact K\"{a}hler manifold of dimension $m$. Then there is a decomposition
\[H^n(X,\RR) = \bigoplus_{2r \leq n} L^rP^{n-2r}(X,\RR).\]
\end{theorem}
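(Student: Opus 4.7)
The plan is to argue by induction that $H^n(X,\RR) = P^n(X,\RR) \oplus L(H^{n-2}(X,\RR))$ for $n \leq m$, and then to iterate this single-step decomposition. Once the one-step splitting is in hand, applying it to $H^{n-2}$ yields $L(H^{n-2}) = LP^{n-2} \oplus L^2(H^{n-4})$, and unwinding gives
\[
H^n(X,\RR) = \bigoplus_{\substack{r \geq 0 \\ 2r \leq n}} L^r P^{n-2r}(X,\RR),
\]
as desired. For the range $n > m$, I would set $P^k = 0$ for $k > m$ and transport the decomposition from $H^{2m-n}(X,\RR)$ (whose degree is $\leq m$) up to $H^n(X,\RR)$ along the Hard Lefschetz isomorphism $L^{n-m}$, reindexing $r \mapsto r + (n-m)$ to match the stated form.

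To prove the one-step decomposition, the main input is Hard Lefschetz applied to $H^{n-2}(X,\RR)$: for $n \leq m$ it gives that $L^{m-n+2}\colon H^{n-2}(X,\RR) \to H^{2m-n+2}(X,\RR)$ is an isomorphism. For directness of the sum, suppose $L\beta \in P^n(X,\RR)$ for some $\beta \in H^{n-2}(X,\RR)$; then $L^{m-n+1}(L\beta) = L^{m-n+2}\beta = 0$, forcing $\beta = 0$ by injectivity, and hence $P^n \cap L(H^{n-2}) = 0$. For surjectivity of $P^n + L(H^{n-2}) \to H^n$, take any $\alpha \in H^n(X,\RR)$ and consider $L^{m-n+1}\alpha \in H^{2m-n+2}(X,\RR)$; by surjectivity of $L^{m-n+2}$, there exists $\beta \in H^{n-2}(X,\RR)$ with $L^{m-n+2}\beta = L^{m-n+1}\alpha$, so that $\alpha - L\beta \in \ker(L^{m-n+1}) = P^n(X,\RR)$, giving the decomposition $\alpha = (\alpha - L\beta) + L\beta$.

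Because Hard Lefschetz is quoted as input, the proof reduces to a linear-algebra argument combined with induction; there is no further analytic or K\"ahler-geometric content to exploit. The main obstacle is purely bookkeeping: one must keep straight that the isomorphism relevant to the splitting at degree $n$ is the Hard Lefschetz isomorphism on $H^{n-2}$ (so the exponent is $m-n+2$, not $m-n$), and that the $n > m$ case can be treated uniformly via the convention $P^k = 0$ for $k > m$ together with the reindexing along $L^{n-m}$. A cleaner conceptual alternative would be to introduce the adjoint $\Lambda$ of $L$ and exhibit $(L,\Lambda,H)$ as an $\mathfrak{sl}_2(\RR)$-triple acting on $H^*(X,\RR)$, whereupon the decomposition becomes the standard isotypic decomposition of a finite-dimensional $\mathfrak{sl}_2$-representation into highest-weight (i.e.\ primitive) vectors; but given only Hard Lefschetz as input, the inductive argument above is the most direct route.
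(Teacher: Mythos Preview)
The paper does not actually prove this statement; it is stated with a citation to Voisin \textup{[Cor.~6.26]} and no argument is given. Your proposal is correct and is essentially the standard proof found in that reference: the one-step splitting $H^n = P^n \oplus L(H^{n-2})$ for $n \le m$ follows from Hard Lefschetz on $H^{n-2}$ exactly as you wrote, and the iteration goes through because $L$ is injective on $H^{n-2}$ in this range, so direct sums are preserved.

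One minor bookkeeping point in the $n > m$ case: after transporting the decomposition of $H^{2m-n}$ along $L^{n-m}$ and reindexing $s \mapsto r = s + (n-m)$, the resulting index range is $n-m \le r \le n/2$, not $0 \le r \le n/2$. To match the stated form $\bigoplus_{2r \le n}$ you should note that the extra terms $L^r P^{n-2r}$ with $0 \le r < n-m$ all vanish: either $n-2r > m$ (handled by your convention $P^k = 0$ for $k>m$), or else $n-2r \le m$ but then $r \ge m-(n-2r)+1$, so $L^r$ already factors through $L^{m-(n-2r)+1}$ and kills $P^{n-2r}$. With that check in place the argument is complete.
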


Furthermore, this decomposition is compatible with the Hodge decomposition, so that if we write $P^{p,q}(X) = P^n(X,\C) \cap H^{p,q}(X)$, where $n = p+q$ and $P^n(X,\C) = P^n(X,\RR) \otimes \C$, then the Hodge decomposition induces a decomposition $P^n(X,\C) = \bigoplus_{p+q=n} P^{p,q}(X)$.
We refer the interested reader to \cite[Rem. 6.27]{htcagI} for details.

\subsection{Spectral Sequences}\label{specseqsec}

Next we discuss an important spectral sequence and show how it can be used to deduce a weaker version of the Hodge decomposition theorem. These ideas will be revisited in the context of mixed Hodge structures in Section \ref{sect:mhsbc}.

We begin with some background on hypercohomology and spectral sequences; a more detailed discussion may be found in \cite[Ch. 8]{htcagI}. Let 
\[\cdots \stackrel{d}{\longrightarrow} A^0\stackrel{d}{\longrightarrow} A^1 \stackrel{d}{\longrightarrow} A^2 \stackrel{d}{\longrightarrow} \cdots\]
be a complex of sheaves on a space $X$. Recall that the \emph{hypercohomology} of the complex $(A^\bullet,d)$ is defined by choosing an acyclic resolution of $A^\bullet$ by a double complex $(I^{\bullet,\bullet},d,d')$, i.e. a diagram with exact rows and columns
\begin{center}
\centerline{
\xymatrix@C=30pt
{
 & \vdots&\vdots&\vdots\\
\cdots\ar[r] & I^{0,1} \ar_d[r]\ar_{d'}[u]& I^{1,1} \ar_d[r]\ar_{d'}[u]& I^{2,1} \ar_d[r]\ar_{d'}[u]&\cdots\\
\cdots\ar[r] & I^{0,0} \ar_d[r]\ar_{d'}[u]& I^{1,0} \ar_d[r]\ar_{d'}[u]& I^{2,0} \ar_d[r]\ar_{d'}[u]&\cdots\\
\cdots\ar[r] & A^0 \ar_d[r]\ar_{d'}[u]& A^1 \ar_d[r]\ar_{d'}[u]& A^2 \ar_d[r]\ar_{d'}[u]&\cdots\\
 & 0 \ar[u] & 0 \ar[u] & 0 \ar[u]
}}
\end{center}
and $H^n(X,I^{i,j})=0$ for $n\ge 0$. Such a resolution always exists. Let $I^n :=\bigoplus_{i+j=n} I^{i,j}$ denote the total complex with differential $\delta=d+(-1)^id'$, then the $n$th hypercohomology of $(A^\bullet,d)$ is defined to be the $n$th cohomology of the complex of global sections of $I^{\bullet}$,
\[\HH^n(X,A^\bullet):=H^n_\delta \Gamma(X,I^\bullet),\]
and this definition is independent of the choice of $I^{\bullet,\bullet}$.

Next assume that we have a decreasing filtration $F$ on $(A^\bullet,d)$ turning it into a filtered complex, i.e. a decreasing filtration 
\[\cdots \subset F^2A^k \subset F^1A^k \subset F^0A^k=A^k\] 
on each $A^k$, such that $d$ preserves the filtration  $d\colon F^pA^k\ra F^pA^{k+1}$. Replacing $I^{\bullet,\bullet}$ if necessary, we may assume that the filtration $F$ lifts to the resolution, in other words, that we have a filtration 
\[ I^{i,j}=F^0I^{i,j} \supset F^1I^{i,j} \supset F^2I^{i,j} \supset \cdots\] 
on the double complex $I^{\bullet,\bullet}$ that is compatible with differentials, such that $F^iI^{\bullet,\bullet}$ is an acyclic resolution of $F^iA^\bullet$. As before, we denote the total complex of $F^iI^{\bullet,\bullet}$ by $F^iI^{\bullet}$ and denote its differential by $\delta$. 

We may use this filtration $F$ to define a filtration on the hypercohomology of $A^\bullet$ as follows. The embedding $F^iI^{\bullet}\subseteq I^{\bullet}$ induces a map of hypercohomologies
\[\HH^n(X,F^pA^\bullet)\lra\HH^n(X,A^\bullet).\]
We may then simply define $F^p\HH^n(X,A^\bullet)$ to be the image of this map.

Now suppose that the filtration $F^pA^{\bullet}$ is bounded, so that for each $k$, there exists a $p$ with $F^pA^k = 0$. Then we have:

\begin{theorem}\label{thm:specseq} \textup{\cite[Thm. 8.21]{htcagI}}   There exist complexes 
\[(E^{p,q}_r,d_r), \quad \mbox{with differentials } \, d_r\colon E^{p,q}_r \rightarrow E^{p+r,q-r+1}_r\]
which satisfy the following conditions:
\begin{enumerate}
\item $E_0^{p,q} = \Gamma(X,\,\Gr_F^p I^{p+q}) := \Gamma(X,\,F^p I^{p+q}/F^{p+1} I^{p+q})$ and $d_0$ is induced by $\delta$.
\item $E^{p,q}_{r+1}$ can be identified with the cohomology of $(E_r^{p,q},d_r)$, i.e. with
\[\frac{\mathrm{ker}(d_r\colon E^{p,q}_r \to E^{p+r,q-r+1}_r)}{\im(d_r\colon E^{p-r,q+r-1} \to E^{p,q}_r)}.\]
\item For $p + q$ fixed and $r$ sufficiently large,
\[E^{p,q}_r = \Gr_F^p \HH^{p+q}(X,A^\bullet)\]
\end{enumerate}
\end{theorem}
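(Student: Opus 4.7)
The plan is to construct the spectral sequence associated to the filtered complex obtained by taking global sections of $I^\bullet$, and then translate its output into the form stated. Set $K^n := \Gamma(X, I^n)$, with differential $\delta$ and filtration $F^p K^n := \Gamma(X, F^p I^n)$. Because each $F^p I^{\bullet,\bullet}$ is an acyclic resolution of $F^p A^\bullet$, the short exact sequence of complexes of sheaves
\[0 \to F^{p+1}I^\bullet \to F^p I^\bullet \to \Gr_F^p I^\bullet \to 0\]
remains exact after applying $\Gamma(X,-)$, and hence $\Gr_F^p K^\bullet = \Gamma(X, \Gr_F^p I^\bullet)$. Moreover $F^p K^n$ inherits boundedness from $F^p A^n$, and $F^p K^\bullet = K^\bullet$ for $p \leq 0$ since $F^0 A^\bullet = A^\bullet$.

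I would then apply the standard construction of a spectral sequence of a filtered complex. Define
\[Z_r^{p,q} := F^p K^{p+q} \cap \delta^{-1}\bigl(F^{p+r}K^{p+q+1}\bigr), \qquad B_r^{p,q} := F^p K^{p+q} \cap \delta\bigl(F^{p-r}K^{p+q-1}\bigr),\]
and set
\[E_r^{p,q} := Z_r^{p,q}\,\big/\,\bigl(Z_{r-1}^{p+1,q-1} + B_{r-1}^{p,q}\bigr).\]
For $x \in Z_r^{p,q}$, the element $\delta x$ lies in $F^{p+r}K^{p+q+1}$ and satisfies $\delta(\delta x) = 0$, so $\delta x \in Z_r^{p+r,q-r+1}$; a short check shows this descends to a well-defined differential $d_r\colon E_r^{p,q} \to E_r^{p+r,q-r+1}$ with $d_r^2 = 0$. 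For $r=0$ one has $Z_0^{p,q} = F^pK^{p+q}$, $Z_{-1}^{p+1,q-1} = F^{p+1}K^{p+q}$, and $B_{-1}^{p,q} \subseteq F^{p+1}K^{p+q}$, giving $E_0^{p,q} = F^pK^{p+q}/F^{p+1}K^{p+q} = \Gamma(X, \Gr_F^p I^{p+q})$ with $d_0$ induced by $\delta$, which is condition (1).

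Condition (2) would be verified by a direct diagram chase: the class of $x \in Z_r^{p,q}$ lies in $\ker d_r$ iff $\delta x = z + \delta y$ with $z \in F^{p+r+1}K^{p+q+1}$ and $y \in F^{p+1}K^{p+q}$, which forces $x - y \in Z_{r+1}^{p,q}$ while $y \in Z_{r-1}^{p+1,q-1}$. Combined with the dual identification that $\im d_r$ is represented in $E_r^{p,q}$ by $B_r^{p,q}$, one obtains the isomorphism
\[H\bigl(E_r^{p,q}, d_r\bigr) \;\cong\; Z_{r+1}^{p,q}/\bigl(Z_r^{p+1,q-1} + B_r^{p,q}\bigr) \;=\; E_{r+1}^{p,q},\]
proving (2). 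This bookkeeping is routine but is the most technically involved portion of the argument.

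The main obstacle is condition (3), where the boundedness hypothesis must be used essentially. Fix $n = p + q$ and choose $N$ with $F^N K^{n+1} = 0$; then for $r \geq N - p$ the condition $\delta x \in F^{p+r}K^{n+1}$ collapses to $\delta x = 0$, and so $Z_r^{p,q}$ stabilizes to $F^p K^n \cap \ker \delta$. Similarly, for $r \geq p$ we have $F^{p-r}K^{n-1} = K^{n-1}$, so $B_r^{p,q}$ stabilizes to $F^p K^n \cap \im \delta$. Consequently
\[E_\infty^{p,q} \;=\; \frac{F^p K^n \cap \ker \delta}{(F^{p+1}K^n \cap \ker \delta) + (F^p K^n \cap \im \delta)},\]
and this quotient is exactly $\Gr_F^p H^n_\delta(K^\bullet) = \Gr_F^p \HH^n(X, A^\bullet)$ for the filtration defined in the excerpt as the image of $\HH^n(X, F^p A^\bullet) \to \HH^n(X, A^\bullet)$, proving (3).
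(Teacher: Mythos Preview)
Your proposal is correct and follows the standard construction of the spectral sequence of a filtered complex, which is precisely what the paper invokes: it does not give its own proof but refers to \cite[Thm.~8.21]{htcagI} and records the explicit formulas for $Z_r^{p,q}$ and $B_r^{p,q}$, which (after unwinding the cokernel definition of $B_r^{p,q}$) coincide with yours up to a harmless index shift. Your verification of (1)--(3) is the routine argument that underlies the cited reference.
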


We note here that the exactness of $\Gamma$ on acyclic objects implies that $\Gamma$ commutes with $\Gr_F$, so in part 1 of the above theorem we have
\[\Gamma\left(X,\Gr_F^p I^{p+q}\right) = \frac{\Gamma(X,\,F^p I^{p+q})}{\Gamma(X,\,F^{p+1} I^{p+q})}.\]

An explicit definition of $E^{p,q}_r$ may be given by defining $Z_r^{p,q}$ and $B_r^{p,q}$ to be
\begin{eqnarray*}
Z_r^{p,q} &:=& \ker(\delta \colon \Gamma(X,\,F^p I^{p+q})\ra \Gamma(X,\,I^{p+q+1}/F^{p+r}I^{p+q+1})),\\
\Gamma(X,\,I^{p+q})/B_r^{p,q} &:=& \coker(\delta \colon \Gamma(X,\,F^{p-r+1} I^{p+q-1})\ra \Gamma(X,\,I^{p+q}/F^{p+1}I^{p+q})),
\end{eqnarray*}
then setting
\[E^{p+q}_r=Z_r^{p,q}/(B_r^{p,q}\cap Z_r^{p,q});\]
we refer the interested reader to the proof of \cite[Thm. 8.21]{htcagI}.

The series of complexes $(E^{p,q}_r,d_r)$ is called the \emph{spectral sequence} associated to the filtered complex $(A^{\bullet},F)$. Part 3 of Thm. \ref{thm:specseq} says that this spectral sequence \emph{converges} to $\HH^{p+q}(X,A^\bullet)$. This group is often denoted by $E^{p+q}_{\infty}$, and its graded piece $\Gr_F^p \HH^{p+q}(X,A^\bullet)$ denoted by $E^{p,q}_{\infty}$. We write
\[ E_0^{p,q}=\Gamma(X,\Gr_F^p I^{p+q}) \Rightarrow \HH^{p+q}(X,A^\bullet),\]
which should be read as ``the spectral sequence with $E_0^{p,q}=\Gamma(X,\Gr_F^p I^{p+q})$ converges to $ \HH^{p+q}(X,A^\bullet)$''. Note here that it is also not uncommon to see a spectral sequence defined using $E_r^{p,q}$, for some $r>0$, on the left hand side instead of $E_0^{p,q}$.

We say that the spectral sequence associated to a filtered complex $(A^\bullet, F)$ \emph{degenerates at $E_r$} if $d_k = 0$ for all $k \geq r$. For such $r$, from Thm. \ref{thm:specseq} we obtain 
\[ E_r^{p,q} = E_\infty^{p,q} = \Gr_F^p \HH^{p+q}(X,A^\bullet).\]

Now we specialize this discussion to the case that interests us. Begin by letting $X$ be a complex manifold (that is not necessarily K\"{a}hler). With notation as in Sect. \ref{sect:hodgedecomp}, we consider the holomorphic de Rham complex $(\Omega^\bullet_X, \partial)$. Equip this complex with the naive filtration given by truncation 
\[F^p \Omega^\bullet_X = \Omega^{\geq p}_X = 0 \longrightarrow \cdots \longrightarrow 0 \longrightarrow \Omega^p_X \stackrel{\partial}{\longrightarrow} \Omega^{p+1}_X \stackrel{\partial}{\longrightarrow} \cdots. \]
We will use the theory above to associate a spectral sequence to this filtered complex, which will allow us to give a weaker form of the Hodge decomposition theorem.

To define the spectral sequence that we need, note first that the double complex $(\calA^{p,q}_X,\partial,(-1)^p\overline{\partial})$ (as defined in Sect. \ref{sect:hodgedecomp}) provides an acyclic resolution of $\Omega^{\bullet}_X$
\begin{center}
\centerline{
\xymatrix@C=30pt
{
 & \vdots&\vdots&\vdots\\
0 \ar[r] & \calA_X^{0,1} \ar_{\partial}[r]\ar_{\overline{\partial}}[u]& \calA_X^{1,1} \ar_{\partial}[r]\ar_{-\overline{\partial}}[u]& \calA_X^{2,1} \ar_{\partial}[r]\ar_{\overline{\partial}}[u]&\cdots\\
0 \ar[r] & \calA_X^{0,0} \ar_{\partial}[r]\ar_{\overline{\partial}}[u]& \calA_X^{1,0} \ar_{\partial}[r]\ar_{-\overline{\partial}}[u]& \calA_X^{2,0} \ar_{\partial}[r]\ar_{\overline{\partial}}[u]&\cdots\\
0 \ar[r] &  \Omega^0_X \ar_{\partial}[r]\ar@{^(->}[u]& \Omega_X^1 \ar_{\partial}[r]\ar@{^(->}[u]& \Omega_X^2 \ar_{\partial}[r]\ar@{^(->}[u]&\cdots\\
 & 0 \ar[u] & 0 \ar[u] & 0 \ar[u]
}}
\end{center}
and the associated total complex is precisely the de Rham complex $\calA_X^{\bullet}$ with the exterior derivative $d = \partial + \overline{\partial}$.

The filtration $F^p\Omega^\bullet_X$ lifts to a filtration on the double complex $\calA_X^{\bullet,\bullet}$, again given by truncation, which induces the filtration $F^p\calA^{\bullet}_X$ defined on the de Rham complex $(\calA^{\bullet}_X,d)$ by
\[F^p\calA^n_X := \bigoplus_{\substack{i \geq p \\ i+j = n}} \calA^{i,j}_X.\]

This filtration induces a filtration on the hypercohomology of $(\Omega_X^{\bullet},\partial)$, as described above. Moreover, by \cite[Cor. 8.14]{htcagI}, the hypercohomology of this complex agrees with the cohomology of $X$, i.e. $\mathbb{H}^n(X,\Omega_X^{\bullet}) = H^n(X,\C)$. So we can define an analogue of the Hodge filtration on the cohomology of $X$ using the filtration on the hypercohomology
\[F^p H^n(X, \C) := F^p\mathbb{H}^n (X, \Omega^\bullet_X).\]

Furthermore, by Thm. \ref{thm:specseq}, the filtered complex $(\calA_X^{\bullet},F)$ has an associated spectral sequence
\[E_0^{p,q} = \Gamma(X,\calA_X^{p,q}) \Rightarrow \Gr_F^p \HH^{p+q}(X,\Omega_X^{\bullet}) = \frac{F^pH^{p+q}(X,\C)}{F^{p+1}H^{p+q}(X,\C)}.\] 
This spectral sequence is called the 
\emph{Fr\"{o}licher spectral sequence}.

Now suppose that the Fr\"{o}licher spectral sequence degenerates at $E_1$. It follows from \cite[Prop. 8.25]{htcagI} that $E_1^{p,q} = H^q(X,\Omega_X^p)$, so we thus obtain the equality
\[H^q(X,\Omega_X^p) = F^pH^{p+q}(X,\C)\, / \, F^{p+1}H^{p+q}(X,\C).\]
This implies the existence of a decomposition
\[H^n(X,\C) \cong \bigoplus_{p+q=n} H^q (X, \Omega_X^p ),\]
but this isomorphism is not necessarily canonical. This is a weaker form of the Hodge decomposition. 

\begin{remark}Unfortunately, this weaker result does not imply the equality of the Hodge numbers $h^{p,q} = h^{q,p}$, nor the Hodge decomposition in the usual form
\[H^n(X,\C) = \bigoplus_{p+q=n}H^{p,q}(X), \ \mathrm{where} \ H^{p,q}(X) := F^pH^{p+q}(X,\C) \cap \overline{F^{q+1}H^{q}(X,\C)}.\]
\end{remark}

In the K\"{a}hler case, the Hodge decomposition (Thm. \ref{hodgedecomp}) and Dolbeault's isomorphism $H^{p,q}(X) \cong H^q(X,\Omega^p_X)$ imply the existence of this weaker decomposition, so the following result can be expected.

\begin{theorem} \textup{\cite[Thm. 8.28]{htcagI}} The Fr\"{o}licher spectral sequence of a compact K\"ahler manifold degenerates at $E_1$.
\end{theorem}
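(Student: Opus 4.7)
The plan is to prove degeneration by a dimension-counting argument, using the Hodge decomposition (Thm. \ref{hodgedecomp}) as the essential input that distinguishes the Kähler case from a general complex manifold.

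First, I would identify the $E_1$ page. The differential $d_0$ on $E_0^{p,q} = \Gamma(X,\calA_X^{p,q})$ is induced by $\delta = \partial + (-1)^p \overline{\partial}$ modulo $F^{p+1}$, so it reduces to $\pm \overline{\partial}$. By Dolbeault's theorem (this is the content of \cite[Prop. 8.25]{htcagI}), one obtains
\[ E_1^{p,q} = H^q(X, \Omega_X^p), \qquad \dim_\C E_1^{p,q} = h^{p,q}(X). \]
Summing over $p+q = n$ and invoking the Hodge decomposition for the compact Kähler manifold $X$ yields
\[ \sum_{p+q=n} \dim_\C E_1^{p,q} \;=\; \sum_{p+q=n} h^{p,q}(X) \;=\; b_n(X) \;=\; \dim_\C H^n(X,\C). \]

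Second, I would compare this with the abutment. The spectral sequence converges to $\Gr_F^{\bullet} H^{p+q}(X,\C)$, so
\[ \sum_{p+q=n} \dim_\C E_\infty^{p,q} \;=\; \dim_\C H^n(X,\C) \;=\; \sum_{p+q=n} \dim_\C E_1^{p,q}. \]
On the other hand, for every $r \geq 1$ the page $E_{r+1}^{p,q}$ is the cohomology at $E_r^{p,q}$ of the differentials $d_r$ entering and leaving this term, so
\[ \dim_\C E_{r+1}^{p,q} \;=\; \dim_\C E_r^{p,q} - \rk\bigl(d_r \colon E_r^{p,q} \to E_r^{p+r,q-r+1}\bigr) - \rk\bigl(d_r \colon E_r^{p-r,q+r-1} \to E_r^{p,q}\bigr). \]
In particular $\dim_\C E_\infty^{p,q} \leq \dim_\C E_1^{p,q}$ for each $(p,q)$, with equality if and only if every differential $d_r$ ($r \geq 1$) with source or target $E_r^{p,q}$ vanishes.

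Third, I combine the two displays. Since the sums of the two sides agree, each individual inequality $\dim_\C E_\infty^{p,q} \leq \dim_\C E_1^{p,q}$ must be an equality. Therefore all differentials $d_r$ with $r \geq 1$ are zero, which is precisely the statement that the Frölicher spectral sequence degenerates at $E_1$.

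The only nontrivial ingredient is the identification of $\sum h^{p,q}$ with $b_n$, which is exactly where the Kähler hypothesis enters (via Thm. \ref{hodgedecomp}); everything else is formal. I do not expect any real obstacle beyond being careful that $d_0$ on $E_0^{p,q}$ really is (up to sign) $\overline{\partial}$, so that $E_1^{p,q}$ is correctly identified with $H^q(X,\Omega_X^p)$ via Dolbeault.
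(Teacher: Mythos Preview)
Your argument is correct and is precisely the standard dimension-counting proof that the paper is alluding to in the sentence immediately preceding the theorem (and that Voisin gives in the cited reference): the Hodge decomposition together with Dolbeault's isomorphism forces $\sum_{p+q=n}\dim E_1^{p,q}=b_n=\sum_{p+q=n}\dim E_\infty^{p,q}$, so no nonzero differential can survive. There is nothing to add.
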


\subsection{Polarized Hodge Structures}

We conclude Sect. \ref{sect:Hodgestructures} by defining an important concept: that of a \emph{polarized Hodge structure}. Imposing the additional condition of a polarization will later allow us to classify polarized Hodge structures by points in a \emph{period domain}, which will in turn prove to be a very powerful tool in the study of Hodge structures that vary in families (see Sect. \ref{sect:vhs}).

We begin by letting $X$ be a compact K\"{a}hler manifold with K\"{a}hler form $\omega$. Fix once and for all an integer $n \geq 0$. Let $H_{\Z}:=H^n(X,\Z)/\mathrm{torsion}$ and $H_\CC := H_{\Z}\otimes_{\Z}\C \cong H^n(X,\C)$.

The Hodge decomposition (Thm. \ref{hodgedecomp}) shows that we may decompose $H_\CC$ as a direct sum
\[H_\CC = \bigoplus_{p+q=n} H^{p,q},\]
so that the data $(H_{\Z},H^{p,q})$ defines a pure Hodge structure of weight $n$.

Now, we can use $\omega$ to define a nondegenerate bilinear form $Q\colon H_{\Z} \times H_{\Z} \to \Z$ by
\[ Q(\xi,\eta) := \int_X \xi \wedge \eta \wedge \omega^{\dim(X)-n}.\]

$Q$ extends to $H_\CC$ by linearity and has the following properties \cite[Sect. 7.1.2]{htcagI}:
\begin{enumerate}
\item $Q$ is symmetric if $n$ is even and skew-symmetric if $n$ is odd.
\item $Q(\xi,\eta) = 0$ for $\xi \in H^{p,q}$ and $\eta \in H^{p',q'}$ with $p \neq q'$.
\item $(-1)^{\frac{n(n-1)}{2}} i^{p-q} Q(\xi,\overline{\xi}) > 0$ for $\xi \in H^{p,q}$ non-zero.
\end{enumerate}
Conditions (2) and (3) are called the \emph{Hodge-Riemann bilinear relations}.

This motivates the following definition:

\begin{definition} A \emph{polarized Hodge structure of weight $n$} consists of a pure Hodge structure $(H_{\Z},H^{p,q})$ of weight $n$ together with a nondegenerate integral bilinear form $Q$ on $H_{\Z}$ which extends to $H_\CC$ by linearity and satisfies (1)--(3) above.
\end{definition}

\begin{remark} It is also common to see the Hodge-Riemann bilinear relations written in terms of the filtration $\{F^p\}$. In this case they become:
\begin{enumerate}[(1')]\addtocounter{enumi}{1}\setlength{\itemindent}{\parindent}
\item $Q(F^p,F^{n-p+1}) = 0$.
\item $(-1)^{\frac{n(n-1)}{2}} Q(C\xi,\overline{\xi}) > 0$ for any nonzero $\xi \in H_\CC$, where $C \colon H_\CC \to H_\CC$ is the \emph{Weil operator} defined by $C|_{H^{p,q}} = i^{p-q}$.
\end{enumerate}
\end{remark}

To illustrate the theory presented here, we will discuss three examples. These examples are studied in depth in the book by Barth, Hulek, Peters and van de Ven \cite{bpv}; we will return to them repeatedly in Sect. \ref{sect:vhs}.

\begin{example} \label{example-ell-curve} For our first example, let $E$ denote an elliptic curve. We will study the polarized Hodge structure of weight $1$ on the first cohomology $H^1(E,\Z)$.

Define $H_{\Z} := H^1(E,\Z) \cong \Z^2$ and $H_\CC := H^1(E,\C) \cong \C^2$. The Hodge decomposition (Thm. \ref{hodgedecomp}) implies that we may write
\[ H_\CC = H^{1,0} \oplus H^{0,1},\]
where $H^{p,q} = H^q(E,\Omega^p_E)$ and $\dim (H^{1,0}) = \dim (H^{0,1}) = 1$. The data $(H_{\Z},H^{p,q})$ defines a pure Hodge structure of weight $1$.

The polarization on $H_\CC$ is defined by
\[ Q(\xi,\eta) := \int_E \xi \wedge \eta\]
and there exists a canonical basis $\alpha,\beta \in H^1(E,\Z)$ (given by taking the Poincar\'{e} dual of the canonical basis for $H_1(E,\Z)$) so that the matrix of $Q$ with respect to this basis is
\[\left[ \begin{array}{cc} 0 & 1 \\ -1 & 0 \end{array} \right],\]
i.e. $Q(\alpha,\alpha) = Q(\beta,\beta) = 0$ and $Q(\alpha,\beta) = 1$.
\end{example}

\begin{example} We can generalize this result to a arbitrary curve $C$ of genus $g \geq 1$. As before, define $H_{\Z} := H^1(C,\Z) \cong \Z^{2g}$ and $H_\CC:= H^1(C,\C) \cong \C^{2g}$. Again, the Hodge decomposition (Thm. \ref{hodgedecomp}) implies that we may write
\[ H_\CC = H^{1,0} \oplus H^{0,1},\]
where $H^{p,q} = H^q(C,\Omega^p_C)$ and $\dim (H^{1,0}) = \dim (H^{0,1}) = g$. The data $(H_{\Z},H^{p,q})$ defines a pure Hodge structure of weight $1$.

The polarization on $H_\CC$ is again defined by
\[ Q(\xi,\eta) := \int_C \xi \wedge \eta\]
and there exists a canonical basis $\alpha_1,\ldots,\alpha_g,\beta_1,\ldots,\beta_g \in H^1(C,\Z)$ so that the matrix of $Q$ with respect to this basis is
\[\left[ \begin{array}{cc} 0 & I_g \\ -I_g & 0 \end{array} \right] \]
where $I_g$ denotes the $g \times g$ identity matrix, i.e. $Q(\alpha_i,\alpha_j) = Q(\beta_i,\beta_j) = 0$ for all $i,j$ and $Q(\alpha_i,\beta_j) = \delta_{ij}$ (where $\delta_{ij}$ is defined to equal $1$ if $i = j$ and $0$ otherwise).
\end{example}

\begin{example} For our final example, consider a K3 surface $S$. This time, we will study the polarized Hodge structure of weight $2$ on the second cohomology $H^2(S,\Z)$.

Define $H_{\Z} := H^2(S,\Z)$ and $H_\CC := H^2(S,\C)$. The Hodge decomposition (Thm. \ref{hodgedecomp}) gives
\[ H_\CC = H^{2,0} \oplus H^{1,1} \oplus H^{0,2},\]
where $H^{p,q} = H^q(E,\Omega^p_E)$. In this case, Ex. \ref{K3Hodgeex} gives $\dim (H^{2,0}) = \dim (H^{0,2}) = 1$ and $\dim (H^{1,1}) = 20$, and the data $(H_{\Z},H^{p,q})$ defines a pure Hodge structure of weight $2$.

The polarization on $H_\CC$ is defined by
\[ Q(\xi,\eta) := \int_S \xi \wedge \eta.\]
This defines a lattice structure on $H_{\Z} \cong \Z^{22}$. By \cite[Prop. VIII.3.3]{bpv}, the lattice thus obtained is an even, unimodular lattice of rank $22$ and signature $(3,19)$, isometric to 
\[ \Lambda_{\mathrm{K3}} := U \oplus U \oplus U \oplus (-E_8) \oplus (-E_8),\]
where $U$ denotes the hyperbolic plane lattice, an even, unimodular, indefinite lattice of rank $2$ with bilinear form given by the matrix $\left[ \begin{array}{cc} 0 & 1 \\ 1  & 0 \end{array} \right]$, and $E_8$ is the root lattice corresponding to the Dynkin diagram $E_8$, an even, unimodular, positive definite lattice of rank $8$.
\end{example}

\section{Variations of Hodge Structure}\label{sect:vhs}

Our next aim is to make rigorous the idea of polarized Hodge structures that vary in families. This will lead us naturally to the concept of a \emph{polarized variation of Hodge structure} and the \emph{period map} associated to it. Motivated by this definition, we will then be able to define a more general \emph{variation of Hodge structure}. 

Our starting point and main motivation in studying this theory is to understand what happens to the Hodge structure on the cohomology of a K\"{a}hler manifold as that manifold is deformed in a family. This has many uses in the study of Calabi-Yau varieties: in particular, it is crucial to the construction of the moduli space of K3 surfaces (see \cite{gmk3s}) and will be one of the foundations of the Gross-Siebert approach to mirror symmetry, as discussed later in this survey.

Much of this theory was originally developed by Griffiths in the late 1960's in his seminal papers \cite{Gr68a} \cite{Gr68b}, \cite{Gr69} \cite{Gr70b}. As our main reference for many results in this section, we will refer to Part III of the book by Voisin \cite[Chaps. 9-10]{htcagI}. However, for results on period mappings we will sometimes instead refer to the book by Carlson, M\"{u}ller-Stach and Peters \cite{pmpd}, which gives a more comprehensive treatment.  Further details of the three examples presented here may be found in \cite{bpv}.

\subsection{The Local Period Mapping} \label{sect:locmap}

Let $f\colon \calX \to \Delta$ be a proper smooth surjective morphism onto a complex polydisc $\Delta$, whose fibres $X_b$ are compact K\"{a}hler manifolds for all $b \in \Delta$. Assume that there exists $\omega \in H^2(\calX,\Z)$ such that, for each $b \in \Delta$, the restriction $\omega_b := \omega|_{X_b}$ is a K\"{a}hler class. This induces a polarized Hodge structure on the cohomology of the fibres $H^n(X_b,\Z)$ as above, which varies with $b \in \Delta$. This is an example of a \emph{variation of polarized Hodge structure}. By studying it in the forthcoming sections, we will be lead naturally to a general definition for such objects.

Note that smoothness implies that the fibres $X_b$ are all diffeomorphic, and Ehresmann's theorem \cite[Thm. 9.3]{htcagI} shows that $f\colon \calX \to \Delta$ has a local topological trivialization, i.e. $\calX$ is diffeomorphic to $X_b \times \Delta$ over $\Delta$, for any $b \in \Delta$. Thus, there is a \emph{unique} isomorphism $H^n(X_b,\Z) \cong H^n(X_{b'},\Z)$ for any $b,b'\in\Delta$. Therefore, without ambiguity we may define $H_{\Z} := H^n(X_b,\Z)/\mathrm{torsion}$ and $H_\CC := H_{\Z} \otimes_{\Z} \C \cong H^n(X_b,\C)$, where these definitions do not depend upon the choice of $b \in \Delta$. The class $\omega$ induces a bilinear form $Q$ on $H_{\Z}$ as above, which extends to $H_\CC$ by linearity.

However, the isomorphisms $H^n(X_b,\C) \cong H^n(X_{b'},\C)$ \emph{do not} preserve the Hodge decompositions of these spaces; instead, the Hodge decomposition of $H^n(X_b,\C)$ varies continuously with $b \in \Delta$. In particular, although the subspaces $H^{p,q}$ arising from the Hodge decomposition vary with $b \in \Delta$, their dimensions $h^{p,q} := \dim(H^{p,q})$ are fixed. As the $X_b$ are all diffeomorphic, this variation of the Hodge decomposition may be thought of as reflecting a variation of complex structure on a fixed manifold.

We can thus define a classifying space for these Hodge decompositions:

\begin{definition} \label{Ddefn} Let $\calD$ denote the set of all collections of subspaces $\{H^{p,q}\}$ of $H_\CC$ such that $H_\CC = \bigoplus_{p+q=n} H^{p,q}$ and $\dim (H^{p,q}) = h^{p,q}$, on which $Q$ satisfies the Hodge-Riemann bilinear relations (2) and (3).
\end{definition}

\begin{remark} In terms of filtrations, $\calD$ may be defined as the set of all filtrations 
\[ H_\CC =F^0 \supset F^1 \supset \cdots \supset F^n \supset \{0\},\]
with $\dim(F^p) = h^{n,0} + \cdots + h^{p,n-p}$, on which $Q$ satisfies (2') and (3').
\end{remark}

$\calD$ is called the \emph{local period domain}. It can be realized as a homogeneous domain $G/K$, where $G$ is the (real) Lie group of linear automorphisms of $H_{\RR} := H_{\ZZ} \otimes_{\ZZ} \RR$ which preserve $Q$, and $K$ is the subgroup of elements fixing a reference structure in $\calD$ (see \cite[Sect. 4.3]{pmpd}). $\calD$ is thus a (real) manifold.

We may associate to $\calD$ a second manifold $\check{\calD}$, called its \emph{compact dual}, which is defined to be the set of all collections of subspaces $\{H^{p,q}\}$ as in Defn. \ref{Ddefn} that satisfy the first Hodge-Riemann bilinear relation (2) but not necessarily the second (3). It can be shown that $\check{\calD}$ is not just a smooth complex manifold, but also a projective algebraic variety. Moreover, the local period domain $\calD$ may be embedded into $\check{\calD}$ as an open subset, thereby endowing $\calD$ with the structure of a smooth complex manifold. We refer the interested reader to \cite[Sect. 4.3]{pmpd}  for details.

There is a well-defined morphism $\phi\colon \Delta \to \calD$, where $\phi$ takes $b \in \Delta$ to the point in $\calD$ corresponding to the Hodge decomposition of $H^n(X_b,\C)$. This morphism is called the  \emph{local period mapping}.

If $\{F^p_b\}$ is the Hodge filtration on $H^n(X_b,\C)$, we find that $\{F^p_b\}$ has the following properties \cite[Sect. 4.4]{pmpd}:
\begin{align*}
\frac{\partial F^p_b}{\partial \overline{b}} &\subset F^p_b && \mathrm{(\emph{holomorphicity})},\\
\frac{\partial F^p_b}{\partial {b}} &\subset F^{p-1}_b && \mathrm{(\emph{Griffiths transversality})}.
\end{align*}
In particular, the first of these properties implies that the local period mapping $\phi$ is holomorphic \cite[Thm. 4.4.5]{pmpd}.

To illustrate these ideas, we now compute the local period domains for the three examples studied in the previous section.

\begin{example} Consider first the example of an elliptic curve $E$. The polarized Hodge structure on the first cohomology $H^1(E,\Z)$ has Hodge numbers $h^{1,0} = h^{0,1} = 1$. The Hodge filtration is
\[ H = F^0 \supset F^1 \supset \{0\},\]
where $F^1 = H^{1,0}$. We see that $\calD$ is the set of all filtrations $\C^2 \supset F^1 \supset \{0\}$ with $\dim(F^1) = 1$, on which $Q$ satisfies the conditions (2') and (3').

To specify a point in $\calD$, it suffices to give $\lambda \in H_\CC$ that spans $F^1$. The Hodge-Riemann bilinear relations (2') and (3') become
\begin{enumerate}[(1')]\addtocounter{enumi}{1}\setlength{\itemindent}{\parindent}
\item $Q(\lambda,\lambda) = 0$,
\item $i Q(\lambda,\overline{\lambda}) > 0$.
\end{enumerate}
Write $\lambda$ in terms of the canonical basis as $\lambda = z_1\alpha + z_2 \beta$, for $z_1,z_2 \in \C$. The relations become
\begin{enumerate}[(1')]\addtocounter{enumi}{1}\setlength{\itemindent}{\parindent}
\item $z_1z_2 - z_2z_1 = 0$,
\item $i (z_1\overline{z_2} - z_2\overline{z_1} ) > 0$.
\end{enumerate}
(2') is vacuous in this case and (3') implies that $z_1 \neq 0$. We may therefore scale $\lambda$ so that $\lambda = \alpha + z_2 \beta$ (i.e. set $z_1 = 1$). Then (3') says
\begin{enumerate}[(1')]\addtocounter{enumi}{2}\setlength{\itemindent}{\parindent}
\item $\mathfrak{Im} (z_2) > 0$.
\end{enumerate}
Since specifying $z_2$ is equivalent to specifying $\lambda$, which uniquely determines $F^1$, we find that in this case the local period domain for elliptic curves is the complex upper half-plane
\[\calD \cong \mathfrak{H} := \{z \in \C \mid \mathfrak{Im}(z) > 0\}.\]
\end{example}

\begin{example} Now consider the case of a curve $C$ of genus $g \geq 1$. This time the polarized Hodge structure on the first cohomology $H^1(C,\Z)$ has Hodge numbers $h^{1,0} = h^{0,1} = g$. As in the case of the elliptic curve, the Hodge filtration is
\[ H_\CC = F^0 \supset F^1 \supset \{0\},\]
where $F^1 = H^{1,0}$. We see that $\calD$ is the set of all filtrations $\C^2 \supset F^1 \supset \{0\}$ with $\dim(F^1) = g$, on which $Q$ satisfies the Hodge-Riemann bilinear relations (2') and (3').

To specify a point in $\calD$, it is sufficient to give a basis $\{\lambda_1,\ldots,\lambda_g\}$ for the subspace $F^1$. Write $(\lambda_1,\ldots,\lambda_g)$ in terms of the canonical basis $\{\alpha_i,\beta_j\}$ as $(\lambda_1,\ldots,\lambda_g) = (\alpha_1,\ldots,\alpha_g,\beta_1,\ldots,\beta_g)Z$, where $Z = \left[\begin{array}{c} Z_1 \\ Z_2 \end{array} \right]$ is a $2g \times g$ complex matrix. Relations (2') and (3') may then be written in terms of the $g \times g$ matrices $Z_1$ and $Z_2$ as follows:
\begin{enumerate}[(1')]\addtocounter{enumi}{1}\setlength{\itemindent}{\parindent}
\item $Z_1^TZ_2 - Z_2^TZ_1 = 0$, i.e. the matrix $Z_1^TZ_2$ is symmetric, and
\item $i (Z_1^T\overline{Z_2} - Z_2^T\overline{Z_1} )$ is a positive-definite matrix.
\end{enumerate}

In particular, (3') here implies that $Z_1$ is invertible. We may therefore replace $(\lambda_1,\ldots,\lambda_g) \mapsto (\lambda_1,\ldots,\lambda_g)Z_1^{-1}$, so that $Z$ becomes $Z = \left[\begin{array}{c} I_g \\ Z_2 \end{array} \right]$. Then the conditions above become
\begin{enumerate}[(1')]\addtocounter{enumi}{1}\setlength{\itemindent}{\parindent}
\item $Z_2^T - Z_2 = 0$, i.e. the matrix $Z_2$ is symmetric, and
\item $\mathfrak{Im} (Z_2)$ is positive definite.
\end{enumerate}
Thus, we find that in this case the local period domain for genus $g$ curves is the \emph{Siegel upper half-space of degree $g$}
\[\calD \cong \mathfrak{H}_g := \{Z \in M_{g\times g}(\C) \mid Z\ \mathrm{is}\ \mathrm{symmetric}\ \mathrm{and}\ \mathfrak{Im}(Z)\ \mathrm{is}\ \mathrm{positive}\ \mathrm{definite}\}.\]
\end{example}

\begin{example} Finally, in the case of a K3 surface $S$ the polarized Hodge structure on the second cohomology $H^2(S,\Z)$ has Hodge numbers $h^{2,0} = h^{0,2} = 1$ and $h^{1,1} = 20$.

Let $\sigma \in H^{2,0}$ be any non-zero element. We claim that $\sigma$ uniquely determines the subspaces $H^{1,1}$ and $H^{0,2}$. The subspace $H^{0,2}$ is easy, it is spanned by the complex conjugate $\overline{\sigma}$. The remaining subspace $H^{1,1}$ is then uniquely defined by the fact that it is orthogonal to $\sigma$ with respect to the bilinear form $Q$: to be precise, $H^{1,1}$ is the complexification of the real vector subspace of $H_{\Z} \otimes \mathbb{R} = \Lambda_{\mathrm{K3}}\otimes \mathbb{R}$ given as the orthogonal complement of the plane spanned by $\mathfrak{Re}(\sigma)$ and $\mathfrak{Im}(\sigma)$.

Thus to specify a point in $\calD$, it suffices to give $\sigma \in \Lambda_{\mathrm{K3}}\otimes \C$ that spans $H^{2,0}$ (in fact, since $\sigma$ is only defined up to non-zero scalar multiples, we only need the class of $\sigma$ in the projectivisation $\Proj(\Lambda_{\mathrm{K3}}\otimes \C)$). The Hodge-Riemann bilinear relations (2) and (3) become
\begin{enumerate}\addtocounter{enumi}{1}
\item $Q(\sigma,\sigma) = 0$,
\item $Q(\sigma,\overline{\sigma}) < 0$.
\end{enumerate}
Thus the local period domain for K3 surfaces is
\[ \calD :=\{[\sigma] \in \Proj(\Lambda_{\mathrm{K3}} \otimes \C) \mid Q(\sigma,\sigma) = 0, \ Q(\sigma,\overline{\sigma}) < 0 \}.\] 
It is a smooth $20$-dimensional quasi-projective variety.

\end{example}

\subsection{The Global Period Mapping} \label{sect:globmap}

Studying families over a polydisc $\Delta$ does not allow us to consider the situation where we have a family over a base that is not topologically trivial. If our base $B$ is a more general complex manifold, then the isomorphism $H^n(X_b,\Z) \cong H^n(X_{b'},\Z)$ for $b,b' \in B$ is not necessarily unique. This means that the period mapping is no longer well-defined. To compensate for this, we must quotient the period domain $\calD$ by the action of monodromy.

Let
\begin{equation} \label{Geq} \mathrm{Aut}(H_{\ZZ},Q) := \{g\colon H_{\Z} \to H_{\Z} \mid Q(g\xi, g\eta) = Q(\xi, \eta)\ \mathrm{for}\ \mathrm{all}\ \xi,\eta \in  H_{\Z}\}.\end{equation}
This group acts on $\calD$ in the obvious way. We have a monodromy representation
\[\varrho \colon \pi_1(B) \to \mathrm{Aut}(H_{\ZZ},Q).\]

Suppose that $\Gamma \subset \mathrm{Aut}(H_{\ZZ},Q)$ contains the image of $\varrho$. Then we have a well-defined map $\phi\colon B \to \Gamma \backslash \calD$. This is the \emph{global period mapping}. The quotient $ \Gamma \backslash \calD$ is called the \emph{period domain}.

We now return to our three examples.

\begin{example} In the case of an elliptic curve $E$, the group $\mathrm{Aut}(H_{\ZZ},Q)$ is the group of linear transformations $\Z^2 \to \Z^2$ that preserve the bilinear form $Q$. This is precisely the group $\mathrm{SL}(2,\Z)$. It acts on $\calD \cong \mathfrak{H}$ by
\[ \left[ \begin{array}{cc} a & b \\ c & d \end{array} \right] \cdot z = \frac{c + dz}{a + bz}. \]
Note that the negative identity matrix acts trivially, so we get an induced action of the modular group $\Gamma := \mathrm{PSL}(2,\Z)$ on $\calD$. The period domain for elliptic curves is therefore
\[ \Gamma \backslash \calD \cong \mathrm{PSL}(2,\Z) \backslash \mathfrak{H}.\]
This is the classical modular curve.
\end{example}

\begin{example} The case of a curve $C$ of genus $g \geq 1$ is similar. Here the group $\mathrm{Aut}(H_{\ZZ},Q)$ is the group of linear transformations $\Z^{2g} \to \Z^{2g}$ that preserve the bilinear form $Q$, which is precisely the group $\mathrm{Sp}(2g,\Z)$. It acts on $\calD \cong \mathfrak{H}_g$ by
\[ \left[ \begin{array}{cc} A & B \\ C & D \end{array} \right] \cdot Z = (C + DZ)(A + BZ)^{-1}, \]
where $A$, $B$, $C$, $D$ are $g \times g$ matrices. Note that the negative identity matrix $-I_{2g}$ acts trivially, so we get an induced action of the group $\Gamma_g := \mathrm{Sp}(2g,\Z) / \{\pm I_{2g}\}$ on $\calD$. The period domain for curves of genus $g \geq 1$ is therefore
\[ \Gamma \backslash \calD \cong \Gamma_g \backslash \mathfrak{H}_g.\]
It is a normal, quasi-projective variety.
\end{example}

\begin{example} Finally, in the case of a K3 surface $S$ the group $\mathrm{Aut}(H_{\ZZ},Q)$ of linear transformations $\Lambda_{\mathrm{K3}} \to \Lambda_{\mathrm{K3}}$ that preserve the bilinear form $Q$ does not act properly discontinuously on $\calD$. Thus the period domain $\mathrm{Aut}(H_{\ZZ},Q) \backslash \calD$ for K3 surfaces will not be a Hausdorff space. This can be rectified by restricting oneself to \emph{algebraic} K3 surfaces, which has the effect of shrinking the local period domain to a $19$-dimensional quasi-projective variety. Once this restriction has been performed, the group $\mathrm{Aut}(H_{\ZZ},Q)$ acts properly discontinuously and the resulting period domain is a quasi-projective variety with only finite quotient singularities. The interested reader may find more details in \cite[Chap. VIII]{bpv} or \cite{gmk3s}.
\end{example}

\subsection{Variations of Hodge Structure} \label{subsecvhs}

We can reverse engineer this theory to define abstract variations of polarized Hodge structure. Heuristically, a variation of polarized Hodge structure may be defined to be a map from a complex manifold into a period domain that satisfies the properties we have observed in Sects. \ref{sect:locmap} and \ref{sect:globmap}.

More rigorously, let $H_{\Z}$ be a finitely generated free abelian group equipped with a nondegenerate bilinear form $Q$. Let $\calD$ be a local period domain classifying Hodge structures of weight $n$ on $H_\CC = H_{\Z} \otimes_{\Z} \C$ polarized by the bilinear form $Q$, with given Hodge numbers $\{h^{p,q}\}$ (defined as in Definition \ref{Ddefn}). Define the group $\mathrm{Aut}(H_{\ZZ},Q)$ as in Eq. \eqref{Geq} and let $\Gamma \subset \mathrm{Aut}(H_{\ZZ},Q)$ be a subgroup. Finally, let $B$ be a complex manifold.

\begin{definition} \label{defn:PVHS} A map $\phi\colon B \to \Gamma \backslash \calD$ defines a \emph{polarized variation of Hodge structure of weight $n$} on $B$ if
\begin{enumerate}[(i)]
\item for every point $b \in B$, the map $\phi$ restricted to a small polydisc around $b$ lifts to a holomorphic map $\tilde{\phi}_b \colon \Delta \to \calD$ ($\phi$ is said to be \emph{locally liftable} and the maps $\tilde{\phi}_b$ are called \emph{local lifts}), and
\item the local lifts $\tilde{\phi}_b$ around any point $b \in B$ satisfy Griffiths transversality.
\end{enumerate}
\end{definition}

\begin{remark} We note that the Griffiths transversality condition (ii) is, in general, non-trivial. The exception occurs when $\calD$ is a \emph{Hermitian symmetric domain} \cite[Rem. 4.4.8]{pmpd}. This happens in two cases \cite[Exs. 4.3.5, 4.3.6]{pmpd}: when the weight $n = 1$ and $\calD$ is the Siegel upper half-space $\mathfrak{H}_g$ (known as \emph{Abelian variety type}), and when $n = 2$ and $\calD$ is a Type IV domain (known as \emph{K3 surface type}, as they arise in the study of variations of Hodge structures arising from families of algebraic K3 surfaces).
\end{remark}

In more generality, let $B$ be a complex manifold and let $\calE_{\Z}$ be a locally constant system of finitely generated free $\Z$-modules on $B$. Define $\calE := \calE_{\Z} \otimes \calO_B$. Then $\calE$ is a complex vector bundle which carries a natural flat connection $\nabla\colon \calE \to \calE \otimes \Omega_B^1$ (the \emph{Gauss-Manin connection}, see \cite[Sect. 9.2]{htcagI}),  induced by $\partial \colon \calO_B \to \Omega_B^1$. Let $\{\calF^p\}$ be a filtration of $\calE$ by holomorphic sub-bundles.

\begin{definition} \label{defn:VHS} The data $(\calE_\ZZ,\calF)$ defines a \emph{variation of Hodge structure of weight $n$} on $B$ if
\begin{enumerate}[(i)]
\item $\{\calF^p\}$ induces Hodge structures of weight $n$ on the fibres of $\calE$, and
\item if $s$ is a section of $\calF^p$ and $\zeta$ is a vector field of type $(1,0)$, then $\nabla_{\zeta}s$ is a section of $\calF^{p-1}$ (this is a reformulation of Griffiths transversality).
\end{enumerate}

Furthermore, if $\calE_{\Z}$ carries a nondegenerate bilinear form $Q \colon \calE_{\Z} \times \calE_{\Z} \to \Z$, we have a \emph{polarized variation of Hodge structure of weight $n$} if, additionally, the linear extension of $Q$ to $\calE$ satisfies
\begin{enumerate}[(i)]\addtocounter{enumi}{2}
\item $Q$ defines a polarized Hodge structure on the fibres of $\calE$, and
\item $Q$ is flat with respect to $\nabla$, i.e.
\[\partial Q(s,s') = Q(\nabla s, s') + Q(s, \nabla s')\]
for any sections $s$, $s'$ of $\calE$.
\end{enumerate}
\end{definition}

Then we find:

\begin{lemma} \textup{\cite[Lemma 4.5.3]{pmpd}} The definitions of polarized variation of Hodge structure of weight $n$ from Definitions \ref{defn:PVHS} and \ref{defn:VHS} agree.
\end{lemma}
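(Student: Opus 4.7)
The plan is to construct explicit mutually inverse correspondences between the two packages of data on a common underlying topological space and then verify that the conditions (i)--(ii) of Defn \ref{defn:PVHS} match the conditions (i)--(iv) of Defn \ref{defn:VHS} pairwise.

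Starting from the data $(\calE_\ZZ, \calF, Q)$ of Defn \ref{defn:VHS}, I would work locally. On a contractible polydisc $\Delta \subset B$ centered at a point $b_0$, I would trivialize $\calE_\ZZ|_\Delta$ using flat sections of the Gauss-Manin connection $\nabla$; this identifies every fibre $\calE_b$ canonically with a fixed reference lattice $H_\ZZ$, and transports $Q$ to the fixed bilinear form on $H_\ZZ$. Condition (iii) then says the fibrewise filtrations $\{\calF^p_b\}$ land in $\calD$, producing a set-theoretic map $\tilde\phi_{b_0}\colon\Delta\to\calD$. A change of flat trivialization differs by the monodromy representation $\varrho\colon \pi_1(B)\to\mathrm{Aut}(H_\ZZ,Q)$ preserving $Q$ (condition (iv) ensures $\varrho$ lands here), so if $\Gamma$ contains the image of $\varrho$ the local maps glue to a well-defined global $\phi\colon B\to\Gamma\backslash\calD$. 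Conversely, starting from $\phi$ as in Defn \ref{defn:PVHS}, the composition with the covering $\calD\to\Gamma\backslash\calD$ gives a monodromy representation into $\Gamma$, hence a local system $\calE_\ZZ$ with form $Q$; the tautological filtration on the flag variety $\check\calD$ pulls back under the local lifts $\tilde\phi_b$ to a filtration $\{\calF^p\}$ on $\calE:=\calE_\ZZ\otimes\calO_B$. These two constructions are inverse up to canonical isomorphism.

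The substantive content is the matching of the properties. The holomorphicity condition $\partial F^p_b/\partial\bar{b}\subset F^p_b$ on local lifts is precisely the statement that $\calF^p$ is a holomorphic sub-bundle of $\calE$, since in the flat trivialization $\nabla$ acts on antiholomorphic directions as $\bar\partial$, so preservation of the filtration by $\bar\partial$ is equivalent to its holomorphicity as a sub-bundle. The Griffiths transversality conditions in the two formulations also coincide: given a section $s$ of $\calF^p$ and a $(1,0)$-vector field $\zeta$, the class of $\nabla_\zeta s$ in $\calE/\calF^{p-1}$, computed in the flat frame, is exactly the image under $\partial/\partial b$ of the map $b\mapsto F^p_b\subset H_\CC$ modulo $F^{p-1}_b$. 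Finally, the polarization matches because the Hodge-Riemann relations cutting out $\calD$ are exactly the fibrewise polarization conditions, and flatness of $Q$ with respect to $\nabla$ is tautological since $Q$ is defined on the underlying local system (flat sections have constant pairings).

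The main obstacle is the careful bookkeeping identifying the infinitesimal variation of the subspace $F^p_b\subset H_\CC$, as $b$ moves in $\calD$, with the Gauss-Manin covariant derivative of sections of $\calF^p$; once one sets up the flat trivialization and the pullback of the tautological filtration, this is essentially a chain-rule computation, but it is where the geometric equivalence of the two Griffiths transversality conditions actually lives. Everything else is formal transport of structure between a classifying map and a filtered bundle with connection.
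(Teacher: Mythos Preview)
The paper does not supply its own proof of this lemma; it simply states the result and cites \cite[Lemma 4.5.3]{pmpd}. Your outline is correct and is essentially the standard argument one finds in that reference: trivialize $\calE_\ZZ$ by flat sections over a contractible neighbourhood to produce local lifts to $\calD$, glue via monodromy to get the map to $\Gamma\backslash\calD$, and match holomorphicity and Griffiths transversality of the filtration against the corresponding infinitesimal conditions on the local lifts. The only point worth tightening is the converse direction: from a locally liftable $\phi\colon B\to\Gamma\backslash\calD$ one does not quite get a monodromy representation by ``composition with the covering'', since $\calD\to\Gamma\backslash\calD$ need not be a covering map (the $\Gamma$-action can have fixed points); rather, one obtains $\varrho$ by comparing local lifts along a loop, which differ by a well-defined element of $\Gamma$ precisely because of the local liftability hypothesis. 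With that caveat, your sketch is sound.
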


Finally, we have:

\begin{definition} The \emph{Hodge bundles} $\calE^{p,n-p}$ associated to a variation of Hodge structure $(\calE_\ZZ,\calF)$ of weight $n$ are defined by $\calE^{p,n-p} = \calF^p/\calF^{p+1}$. 
\end{definition}

As one might expect, there is a $C^{\infty}$ (but \emph{not} holomorphic) decomposition 
\[\calE = \bigoplus_{p+q=n} \calE^{p,q}, \qquad \calE^{p,q} = \overline{\calE^{q,p}}.\]

\section{Mixed Hodge Structures}\label{sect:mhs}

The next section of these notes will be concerned with what happens to the Hodge structures on the cohomology of a K\"{a}hler manifold as that manifold degenerates to a singular variety. This provides the main motivation for the development of \emph{limiting mixed Hodge structures}. In this section we will give an overview of some of this theory.

The basic setup consists of a proper holomorphic map $f\colon \calX\ra\Delta$ onto the unit disc $\Delta \subset \CC$, that is smooth away from the fibre over zero. Assume that $X_b$, the fibre over $b\in\Delta$, is K\"ahler for each $b\neq 0$. Then for any $n$, the cohomology $H^n(X_b,\CC)$ carries a natural pure Hodge structure of weight $n$. This gives rise to a variation of Hodge structures, as discussed in the previous section.

Under this setup, in 1970 Griffiths \cite{Gr70} conjectured that monodromy around $0 \in \Delta$ should give a filtration $W$, called a \emph{weight filtration}, on the cohomology $H_{\QQ}=H^n(X_b,\QQ)$ of a nearby fibre and, moreover, that there should exist a suitable Hodge filtration $F_\llim$ on $H_{\QQ}$ so that the triple $(H_{\QQ},W,F)$ defines a so-called \emph{mixed Hodge structure}, i.e. $F_\llim$ induces a pure Hodge structure of weight $k+n$ on the $k$th $W$-graded piece $\Gr_k^W H_{\QQ}$ .

The first progress on this conjecture came in 1971, when Deligne \cite{DelTH2} proved that, for every $n$, the $n$th cohomology of a smooth variety $U$ over $\CC$ carries a natural and functorial mixed Hodge structure. Using this, in 1973 a seminal work by Schmid \cite{Sc73} defined the filtration $F_\llim$, verified Griffiths' prediction and studied abstract aspects of limiting mixed Hodge structures.

The next year, in 1974, Deligne \cite{DelTH3} extended his previous result to arbitrary varieties over $\CC$ (including singular ones) and Malgrange \cite{Ma74} studied the asymptotic behaviour of flat sections of the Gauss-Manin connection near the singular points of a regular function. 

Based on these results, in 1976, Steenbrink \cite{St75} was able to give a comprehensive account of the case where the central fibre $X_0$ is a semi-stable normal crossing divisor and the map $f$ is projective. He then proved that the sequence
\[H^n(X_0,\CC)\stackrel{r^*}{\lra} H^n(X_t,\CC)\stackrel{\log T}{\lra} H^n(X_t,\CC)\]
is an exact sequence of mixed Hodge structures, where $r\colon X_t\ra X_0$ is a retraction map and $T\colon  H^n(X_t,\CC)\ra H^n(X_t,\CC)$ is the monodromy operator given by parallel transport around $0$. This \emph{local invariant cycle theorem} had been conjectured by Griffiths \cite{Gr70}. Steenbrink's results were later extended to the K\"ahler case in 1977, by Clemens \cite{Cl77}.

Finally (for us at least!), in 1980, Var\v{c}enko \cite{Va80} showed that near a singularity of a regular function, the asymptotics of relative differential forms determine a mixed Hodge structure.
 
Our aim is to give an overview of some of this theory. The exposition in this section will follow closely the comprehensive book by Peters and Steenbrink \cite{PS08}, with additional examples to illustrate the main ideas.

\subsection{Mixed Hodge Structures on Smooth Varieties}\label{mhssmooth}

Before we can start studying degenerations of Hodge structures, we first need to introduce some definitions.

\label{sec:2}
\begin{definition}
\begin{enumerate}
\item A \emph{mixed Hodge structure} $(H_\ZZ,W,F)$ consists of a $\ZZ$-mod\-ule $H_\ZZ$ together with an increasing filtration $W$
\[\cdots\subset W_0\subset W_1\subset W_2\subset \cdots\] 
of $H_\QQ := H_\ZZ\otimes_\ZZ\QQ$ and a decreasing filtration $F$
\[H_{\CC} =F^0\supset F^{1}\supset F^2\supset\cdots\] 
of $H_{\CC} :=H_\ZZ\otimes_\ZZ\CC$ such that $F$ defines a (pure) Hodge structure of weight $k$ on the graded piece $\Gr^W_k H_\QQ=W^{k}H_\QQ/W^{k+1}H_\QQ$.
\item A mixed Hodge structure $(H_\ZZ,W,F)$ is \emph{graded-polarizable} if each graded piece $\Gr^W_k H_\QQ$ is polarizable.
\item The \emph{Hodge numbers} of a mixed Hodge structure $(H_\ZZ,W,F)$ are defined to be
\[h^{p,q}(H)=\dim_\CC \Gr^p_F\Gr^W_{p+q}H_\CC\]
\item A \emph{mixed Hodge structure of weight $n$}, for $n \in \ZZ$, is a triple $(H_\ZZ,W,F)$ such that $(H_\ZZ,W[-n],F)$ is a mixed Hodge structure, where $W[-n]$ denotes the shifted filtration defined by $W[-n]_{\bullet} := W_{\bullet - n}$. In particular, note that the Hodge numbers of a mixed Hodge structure of weight $n$ are given by $h^{p,q}(H)=\dim_\CC \Gr^p_F\Gr^W_{p+q-n}H_\CC$
\end{enumerate}
\end{definition}

One of the simplest examples of a mixed Hodge structure is induced by the Hodge filtration on the cohomology of a compact K\"{a}hler manifold.

\begin{example} 
\label{example-MHS1}
Let $X$ be a compact K\"ahler manifold and set 
\begin{eqnarray*}
H_\ZZ &=& \bigoplus_i H^i(X,\ZZ)/\mathrm{torsion},\\
W_kH_\QQ &=& \bigoplus_{i\le k} H^i(X,\QQ),\\
F^pH_\CC &=& \bigoplus_{i} F^pH^i(X,\CC),
\end{eqnarray*}
where $F^pH^i(X,\ZZ)$ denotes the usual Hodge filtration of $H^i(X,\CC)$. Then $(H_\ZZ,W,F)$ is a mixed Hodge structure.
\end{example}

The first main result about mixed Hodge structures is the following, originally due to Deligne \cite{DelTH2}. We will discuss this result further in Ex. \ref{omegalogDex}, a more detailed discussion may also be found in \cite[Ch. 4]{PS08}.

\begin{theorem}\textup{\cite{DelTH2}}
\label{deligne-on-smooth}
Let $U$ be a smooth variety \textup{(}that is not necessarily compact\textup{)}. Then $H_\ZZ=H^k(U,\ZZ)$ carries a natural mixed Hodge structure, which is functorial with respect to maps of algebraic manifolds $U \to V$. Moreover, $\Gr_i^W H_\QQ=0$ unless $k\le i\le \min(\dim U,2k)$.
\end{theorem}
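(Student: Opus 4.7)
The plan is to follow Deligne's construction in \cite{DelTH2}: compactify $U$ by a smooth projective variety $X$ with simple normal crossings boundary $D := X\setminus U$, introduce two filtrations on the logarithmic de Rham complex $\Omega_X^\bullet(\log D)$, and show that the induced filtrations on $H^k(U,\CC) \cong \HH^k(X, \Omega_X^\bullet(\log D))$ define a mixed Hodge structure.

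First, I would invoke Hironaka's theorem to produce the compactification $(X,D)$, with smooth irreducible components $D_1,\dots,D_N$ crossing normally. Deligne's quasi-isomorphism $Rj_*\CC_U \simeq \Omega_X^\bullet(\log D)$, where $j \colon U \hookrightarrow X$, identifies the complex cohomology of $U$ with hypercohomology of the log de Rham complex. I would then equip this complex with the Hodge filtration $F^p := \Omega_X^{\geq p}(\log D)$, defined by stupid truncation as in Sect. \ref{specseqsec}, together with the weight filtration $W_m\Omega_X^p(\log D)$ consisting of forms with at most $m$ logarithmic poles. The crucial property of $W$ is the Poincar\'e residue isomorphism
\[
\mathrm{Res} \colon \Gr_m^W \Omega_X^\bullet(\log D) \;\stackrel{\sim}{\longrightarrow}\; (a_m)_* \Omega_{\widetilde{D}^{(m)}}^{\bullet - m}[-m],
\]
where $a_m \colon \widetilde{D}^{(m)} \to X$ is the normalization of the $m$-fold intersection locus of $D$, a disjoint union of smooth projective varieties of dimension $\dim U - m$.

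Next I would analyze the two spectral sequences abutting to $H^k(U,\CC)$. Via the residue, the weight spectral sequence has
\[
{}_W E_1^{-m,\,k+m} \;=\; H^{k-m}(\widetilde{D}^{(m)},\CC)(-m),
\]
which by Thm. \ref{hodgedecomp} applied to the smooth projective $\widetilde{D}^{(m)}$ carries a pure Hodge structure of weight $k+m$. The $d_1$-differentials are (alternating sums of) Gysin maps, hence morphisms of pure Hodge structures of the same weight; by strictness for such morphisms the weight spectral sequence degenerates at $E_2$. Separately, I would argue that the Fr\"olicher-type spectral sequence
\[
{}_F E_1^{p,q} \;=\; H^q(X, \Omega_X^p(\log D)) \;\Rightarrow\; H^{p+q}(U, \CC)
\]
degenerates at $E_1$, which reduces via the residue and the $E_2$-degeneration above to the classical Hodge-to-de-Rham degeneration on each $\widetilde{D}^{(m)}$ (Thm. 8.28 of \cite{htcagI}). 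Applying Deligne's lemma on the compatibility of two filtrations on a bifiltered complex then shows that the induced $W$ and $F$ on $H^k(U,\ZZ)/\mathrm{torsion}$ form a mixed Hodge structure. The weight bounds fall out of the $E_1$-term: $H^{k-m}(\widetilde{D}^{(m)},\CC)(-m)$ vanishes unless $0 \leq k-m$ (yielding $i=k+m \geq k$) and $\widetilde{D}^{(m)}$ is nonempty with $k-m \leq 2\dim \widetilde{D}^{(m)}$, giving the asserted upper bound on $i$.

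The main obstacle, in my view, is proving that this structure is intrinsic to $U$: independent of the choice of smooth NCD compactification and functorial under morphisms $f \colon U \to V$. For independence, I would observe that any two such compactifications $(X,D)$ and $(X',D')$ are dominated by a third, obtained as a Hironaka resolution of the closure of the graph of $U \hookrightarrow X \times X'$; the induced comparison maps of log de Rham complexes are bifiltered quasi-isomorphisms, so the resulting filtrations on $H^k(U,\CC)$ coincide. For functoriality of a morphism $f \colon U \to V$, I would, after further blow-ups, extend $f$ to a morphism $\bar f \colon \bar U \to \bar V$ of NCD compactifications sending boundary into boundary; pullback on log de Rham complexes then preserves both $F$ and $W$, giving a morphism of mixed Hodge structures. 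The delicate point throughout is verifying canonicity in the bifiltered derived category, which is ultimately where the bulk of Deligne's original argument lives.
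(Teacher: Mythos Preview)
The paper does not give its own proof of this theorem; it is stated with a citation to Deligne \cite{DelTH2}, and the relevant construction---the bifiltered log de Rham complex $(\Omega_X^\bullet(\log D), W, F)$ on a normal crossing compactification---is recorded later in Example~\ref{example-MHS2} without argument. Your sketch is a correct outline of Deligne's original proof, consistent with and considerably more detailed than what the paper presents; there is nothing further in the paper to compare against beyond the bare construction, which you reproduce.

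One incidental remark: your derivation of the weight range from the $E_1$-term $H^{k-m}(\widetilde D^{(m)},\CC)(-m)$ gives $k \le i \le \min(2k,\, 2\dim U)$, which is the standard bound. The upper bound as printed in the paper, $\min(\dim U, 2k)$, is evidently a typo---it is already violated by the paper's own Example~\ref{example-MHS3}, where $H^1$ of a punctured $\PP^1$ carries pure weight $2 > 1 = \dim U$.
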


To illustrate the usefulness of such mixed Hodge structures, consider the situation where $X$ is a smooth projective variety and $Y\subset X$ is a closed subvariety. Let $U=X\setminus Y$ denote the open complement of $Y$ in $X$. We obtain a long exact sequence of relative cohomology groups
\begin{equation}
\label{relative-coho-seq}
\cdots\longrightarrow H^i(X,U)\lra H^i(X)\lra H^i(U)\lra H^{i+1}(X,U)\lra \cdots
\end{equation}

An important feature of this sequence is that it upgrades to an exact sequence of mixed Hodge structures, where the Hodge structure on $H^i(X)$ is the usual pure Hodge structure on the cohomology of a smooth projective variety and $H^i(U)$ and $H^{i}(X,U)$ both carry mixed Hodge structures. 

We will use this to study mixed Hodge structures in two specific examples. Both will feature elliptic curves, as their cohomologies carry fairly simple Hodge structures (Ex. \ref{example-ell-curve}).

\begin{example}[Punctured elliptic curves] \label{puncturedellipticex}
It follows from Torelli's theorem (see, for instance, \cite[Thm. 3.5.2]{pmpd}) that an elliptic curve $E$ is determined uniquely up to isomorphism by the Hodge structure on $H^1(E,\ZZ)\cong \ZZ^2$. This statement remains true if we consider the non-compact variety $E\setminus \{p\}$, where $p$ is a point on $E$. Indeed, puncturing once does not change $H^1(E,\ZZ)$ (instead it kills $H^2(E,\ZZ)$), so 
\[H^1(E,\ZZ) = H^1(E\setminus\{p\},\ZZ).\] 

Things become more interesting when we puncture the elliptic curve more than once. Let $U := E\setminus\{p_1,...,p_k\}$, for $k>0$ and $p_j$ distinct. Then
\[H^1(U,\QQ)\cong \QQ^{k+1}\]
When $k$ is even, we see that $H^1(U,\QQ)$ has odd rank. It therefore cannot support a pure Hodge structure of weight one, as any lattice supporting such a Hodge structure must necessarily have even rank $h^{1,0}+h^{0,1} = 2h^{1,0}$.

Instead, there is a mixed Hodge structure on $H^1(U,\QQ)$ (computed using the method of Ex. \ref{omegalogDex}), with
\[\Gr_0^WH^1(U,\QQ)=0,\qquad \Gr_1^WH^1(U,\QQ)\cong\QQ^2,\qquad \Gr_2^WH^1(U,\QQ)\cong\QQ^{k-1}.\]
 
Now consider the long exact sequence of relative cohomology \eqref{relative-coho-seq}.  The maps in this sequence respect the weight filtration, so we can decompose it into graded pieces. Noting that 
\[H^k(E,U;\QQ)=H^k_{\{p_1,\cdots,p_k\}}(E,\QQ)\cong\left\{\begin{array}{ll} \QQ^{k} & k=2\\ 0&k\neq 2,\end{array}\right. \]
we see that decomposition into graded pieces gives the three exact sequences
\[ \begin{array}{lcc}
\Gr^W_0\colon&\qquad& 0\lra H^0(E)\stackrel{\sim}{\longrightarrow} H^0(U) \lra 0 \\[2mm]
\Gr^W_1\colon&& 0 \lra H^1(E) \stackrel{\alpha}{\longrightarrow} \Gr^W_1H^1(U)\lra  \Gr^W_1H^2(E,U) \lra 0 \\[2mm]
\Gr^W_2\colon&& 0\lra \Gr^W_2H^1(U)\lra \Gr^W_2H^{2}(E,U) \ra H^2(E)  \lra 0 \\[2mm]
\end{array}\]

As $\Gr_1^WH^1(U,\QQ)\cong\QQ^2$, we see that $\Gr^W_1H^2(E,U)=0$ and $\alpha$ must be an isomorphism. So the pure Hodge structure on $\Gr_1^WH^1(U,\QQ)\cong\QQ^2$ is precisely the same as the one on the cohomology $H^1(E,\QQ)$ of the unpunctured curve; intuitively, we see that the mixed Hodge structure of the punctured curve still knows which elliptic curve it is.  

We also see that the third sequence above is isomorphic to an exact sequence of the form
\[0\lra \QQ^{k-1} \lra \QQ^{k} \lra \QQ  \ra 0.\]
From this we can see that the mixed Hodge structure of $H^2(E,U)$ must be concentrated in $\Gr_2^W$.
\end{example}

\begin{example}[Affine complement of an elliptic curve] 
Let $E$ be an elliptic curve embedded in $\PP^2$ and let $U:=\PP^2\setminus E$. 
It is a fun application of the techniques in \cite{RSTZ14} to show that
$$H_k(U,\ZZ)\cong\left\{\begin{array}{ll} 0&k\ge 3\\ \ZZ^{2} & k=2\\ \ZZ/3 &k= 1\\ \ZZ&k=0.\end{array}\right.$$
In particular $\dim H_2(U,\QQ)=2$ and hence, by Poincar\'e duality, $\dim H^2_c(U,\QQ)=2$, where $H^\bullet_c$ denotes cohomology with compact support.

Since $H^1(\PP^2)=0$, the long exact sequence of cohomology with compact support reads
\[ 0 \lra H^1(E) \stackrel{\alpha}{\longrightarrow} H_c^2(U)\lra H^2(\PP^2)\lra \cdots\]
and, since we know the ranks of the first few terms, we see that $\alpha$ must be an isomorphism. 

This sequence also upgrades to a sequence of mixed Hodge structures, giving
\[\Gr_1^W H_c^2(U) = H^1(E).\]
Moreover, Poincar\'e duality is compatible with mixed Hodge structures and in this case yields
\[\Gr_3^W H^2_c(U) = H^1(E)^{\vee},\]
where $H^1(E)^{\vee}$ denotes the dual of the Hodge structure $H^1(E)$. The upshot is that the mixed Hodge structure on the cohomology of $U$ knows about the elliptic curve $E$ that was removed from $\PP^2$ to obtain $U$.
\end{example} 

\subsection{Mixed Hodge Structure via a Bifiltered Complex} \label{sect:mhsbc}

A common way to construct mixed Hodge structures is via bifiltered complexes. As in Sect. \ref{specseqsec}, let 
\[\cdots \stackrel{d}{\longrightarrow} A^0\stackrel{d}{\longrightarrow} A^1 \stackrel{d}{\longrightarrow} A^2 \stackrel{d}{\longrightarrow} \cdots\]
be a complex of sheaves on a space $X$. 
Assume that we have an increasing filtration $W$ and a decreasing filtration $F$ on $A^\bullet$ turning it into a bifiltered complex, i.e. $d$ preserves the filtrations so that $d\colon W_iA^k\ra W_iA^{k+1}$ and $d\colon F^iA^k\ra F^iA^{k+1}$. Then $W$ and $F$ induce filtrations (also called $W$ and $F$) on the hypercohomologies $\HH^n(X,A^{\bullet})$ in the way described in Sect. \ref{specseqsec}.
 
In order for $(\HH^n(X,A^\bullet),W,F)$ to give a mixed Hodge structure (of weight $m$), we also need to have a rational version $(A_\QQ^\bullet,W_\QQ)$ of $(A^\bullet,W)$ and a $\ZZ$-version $A_\ZZ^\bullet$ of $A^\bullet$. Furthermore we require that, for any $k$, the spectral sequence of the filtered complex $(\Gr_k^WA^\bullet,F)$ degenerates at $E_1$ and induces a Hodge structure of weight $k+m$ on $\Gr_k^W\HH^n(X,A^\bullet)$ (see Sect. \ref{specseqsec}).

If this is satisfied, one calls $(A^{\bullet},W,F)$ a \emph{cohomological mixed Hodge complex}, see \cite[Sect. 8]{DelTH3}; most known mixed Hodge structures are constructed this way. An important example of this is the following.


\begin{example} \label{omegalogDex}
\label{example-MHS2}
Let $U$ be a smooth variety and $X\supseteq U$ a compactification, such that $X\setminus U=D$ is a normal crossing divisor in $X$. Let $\Omega^\bullet_X(\log D)$ denote the differential forms on $X$ with at most logarithmic poles along $D$, i.e. $\Omega^r_X(\log D)$ is generated by forms of the shape
$\frac{dz_1}{z_1}\wedge\cdots\wedge\frac{dz_j}{z_j}\wedge \alpha$, where $z_i$ is a local equation of a component of $D$, $j\le r$ and $\alpha\in\Omega^{r-i}_X$. Then $A^\bullet=\Omega_X^\bullet(\log D)$ is a complex of sheaves under the de Rham differential. Furthermore, one can show \cite[Thm. 4.2]{PS08} that
\[\HH^n(X,\Omega_X^\bullet(\log D)) = H^n(U,\CC).\]

There are two filtrations on $\Omega^\bullet_X(\log D)$, given by
\begin{eqnarray*}
W_m\Omega^i_X(\log D) &=& \left\{\begin{array}{ll} \Omega^i_X(\log D)&\hbox{if }i\le m\\ \Omega_X^{i-m}\wedge\Omega^m_X(\log D)\quad&\hbox{if }0\le m\le i\\0&\hbox{if }m<0\end{array}\right.,\\
F^p\Omega^i_X(\log D) &=& \left\{\begin{array}{ll} \Omega^i_X(\log D)\quad&\hbox{if }p\le i\\ 0&\hbox{otherwise.}\end{array}\right.
\end{eqnarray*}
The main result of \cite{DelTH2} states that this defines a cohomological mixed Hodge complex $(\Omega_X^\bullet(\log D)),W,F)$. This cohomological mixed Hodge complex gives rise to the natural mixed Hodge structure on $H^n(U,\C)$ given by Thm. \ref{omegalogDex}.
\end{example}

\begin{example} 
\label{example-MHS3}
As a more concrete version of Ex. \ref{omegalogDex}, let us compute the mixed Hodge structure of $U = \PP^1\setminus\{p_1,\ldots,p_k\}$, with $p_i$ distinct points and $k\ge 1$. The bifiltered complex $A^\bullet = \Omega_{\Proj^1}^\bullet(\log \{p_1,\ldots,p_k\})$ is given by
\[\shO_{\PP^1}\stackrel{d}{\lra}\Omega^1_{\PP^1}(\log\{p_1,\ldots,p_k\}).\]
As both terms have no higher cohomology, we can just take the total complex $I^\bullet$ of the resolution to coincide with $A^\bullet$.

Taking global sections, the complex becomes
\[\CC\cong\Gamma(\PP^1,\shO_{\PP^1})\stackrel{d}{\lra}\Gamma(\PP^1,\Omega^1_{\PP^1}(\log\{p_1,\ldots,p_k\}))\]
and the differential becomes trivial. The right hand side is isomorphic to $\CC^{k-1}$ (by taking residues and finding that they sum to zero).
Computing $\HH^n(\Proj^1,A^\bullet)$, we therefore conclude that $H^2(U,\CC)=0$ and
\begin{eqnarray*}
h^{p,q}H^1(U,\CC) =& \dim \Gr_F^p\Gr^W_{p+q}\HH^1(\Proj^1,A^\bullet) &= \left\{\begin{array}{ll} k-1 &\quad p=q=1\\ 0&\quad\hbox{otherwise,}\end{array}\right.\\
h^{p,q}H^0(U,\CC) =& \dim \Gr_F^p\Gr^W_{p+q}\HH^0(\Proj^1,A^\bullet) &= \left\{\begin{array}{ll} 1 &\quad p=q=0\\ 0&\quad\hbox{otherwise.}\end{array}\right.
\end{eqnarray*}
The main difference between this and Ex. \ref{example-MHS1} is that here $H^1(U,\CC)$ is concentrated in weight two rather than weight one. In other words, the Hodge structure is still pure, just in a different weight. 

More generally, several different weights may contribute to the cohomology $H^1(U,\CC)$, as happened in Ex. \ref{puncturedellipticex}. However, Thm. \ref{omegalogDex} shows that only weights greater than or equal to $n$ can contribute to the mixed Hodge structure of $H^n(U,\CC)$ for a smooth variety $U$. To get contributions from weights below the cohomology degree, one needs to look at Hodge structures of singular varieties; see Ex.~\ref{example-MHS4}.
\end{example}

\subsection{Extending Vector Bundles with Connection and Limiting Mixed Hodge Structures}
\label{schmidswork}

We can also define a mixed Hodge structure on the central fibre of a degeneration. The mixed Hodge structure obtained in this way is called the \emph{limiting mixed Hodge structure}. For a more detailed discussion, we refer the interested reader to \cite[Ch. 11]{PS08}.

We begin with some general results about vector bundles over punctured discs. The reader should keep in mind the special case of vector bundles $\calE$ arising from variations of Hodge structure $(\calE_{\Z},\calF)$, as we will specialize to this case shortly. 

Begin by letting $\Delta \subset \CC$ denote the unit disk and let $\calE$ be a vector bundle on $\Delta$ with connection $\nabla$ on $\calE|_{\Delta\setminus \{0\}}$. 

\begin{definition} \label{logpolesdefn}
$\nabla$ is said to have \emph{logarithmic poles along $\{0\}$} if $\nabla$ extends to a \emph{logarithmic connection}
\[\nabla\colon \calE\lra\calE\otimes_{\shO_\Delta}\Omega^1_\Delta(\log \{0\})\]
(still satisfying the Leibniz rule $\nabla(f\cdot s) = f \nabla(s) + df \otimes s$).
\end{definition}

Now define a map $R$, the \emph{Poincar\'{e} residue map}, by
\begin{eqnarray*}
R\colon \Omega^1_\Delta(\log\{0\})&\lra&\shO_{\{0\}}\\
\omega &\longmapsto& f|_{\{0\}},
\end{eqnarray*}
where $\omega=f\wedge\frac{dz}z + \eta$, for $z$ a coordinate on $\Delta$ vanishing at the origin and some $f\in\shO_\Delta$ and $\eta\in\Omega^1_\Delta$.

It is easy to check \cite[Sect. 11.1]{PS08} that the map $(1 \otimes R) \circ \nabla \colon \calE \to \calE \otimes \calO_{\{0\}}$ induces an $\calO_{\{0\}}$-linear endomorphism
\[\res(\nabla) \in\End(\calE \otimes\calO_{\{0\}}).\]
This endomorphism is called the \emph{residue} of $\nabla$ at $\{0\}$.

Now suppose that $\calE$ is a vector bundle with connection $\nabla$ on $\Delta\setminus\{0\}$. The next theorem shows that we can extend $\calE$ to a vector bundle with logarithmic connection over $\Delta$,  whose residue at $\{0\}$ has a particular form.

\begin{theorem} \textup{\cite[Prop. 11.3]{PS08}}
Let $\tau\colon \CC/\ZZ\ra\CC$ be a section of the projection $\CC\ra\CC/\ZZ$. 
Then there is a unique extension $(\calE_\tau,\nabla_\tau)$ of $(\calE,\nabla)$ to $\Delta$, such that $\nabla_\tau$ has logarithmic poles along $\{0\}$ and $\res(\nabla)$ has eigenvalues in the image of $\tau$.
\end{theorem}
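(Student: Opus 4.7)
The plan is to run the classical Deligne canonical-extension construction: on the universal cover of $\Delta^*:=\Delta\setminus\{0\}$, trivialise $(\calE,\nabla)$ by flat sections, use $\tau$ to select a distinguished logarithm of the monodromy, and then twist the flat sections into single-valued holomorphic sections on $\Delta^*$ whose frame defines $\calE_\tau$.

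In detail, let $\pi\colon\HH\to\Delta^*$ denote the universal cover via $z=e^{2\pi iu}$. Flatness of $\nabla$ together with simple-connectedness of $\HH$ implies that $\pi^*\calE$ is trivialised by its finite-dimensional space $V$ of global flat sections, and the deck transformation $u\mapsto u+1$ induces the monodromy operator $T\in\End(V)$. Using the Jordan decomposition $T=T_sT_u$, I build $\Gamma\in\End(V)$ as follows: on each generalised $\mu$-eigenspace of $T_s$, its semisimple part acts as the scalar $\tau(\overline{\alpha})\cdot\id$, where $\overline{\alpha}\in\CC/\ZZ$ is the unique class with $e^{-2\pi i\overline{\alpha}}=\mu$, and its nilpotent part is $-\tfrac{1}{2\pi i}\log T_u$. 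By construction $\exp(-2\pi i\Gamma)=T$, and the eigenvalues of $\Gamma$ all lie in $\im(\tau)$. For each $v\in V$ set
\[
\widetilde{v}(z) := \exp\bigl(\Gamma\cdot\log z\bigr)\cdot v,
\]
which is single-valued on $\Delta^*$ because under analytic continuation around $0$ the factor $\log z$ increases by $2\pi i$ while $v$ is replaced by $Tv$, and the two effects cancel since $\exp(2\pi i\Gamma)\cdot T=1$. Choosing a basis $v_1,\dots,v_r$ of $V$, I define $\calE_\tau$ to be the locally free $\shO_\Delta$-sheaf of which $\widetilde{v}_1,\dots,\widetilde{v}_r$ is a frame over $\Delta$. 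A direct differentiation, using $\nabla v_i=0$, gives
\[
\nabla\widetilde{v}_i \;=\; \Gamma\cdot\widetilde{v}_i\otimes\frac{dz}{z},
\]
so $\nabla$ extends to a logarithmic connection $\nabla_\tau$ whose residue at $\{0\}$ acts as $\Gamma$, with eigenvalues in $\im(\tau)$ as required.

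For uniqueness, given a second extension $(\calE',\nabla')$ satisfying the hypotheses, restriction to $\Delta^*$ identifies it canonically with $(\calE,\nabla)$, so the change-of-frame between $\calE'$ and $\calE_\tau$ is a holomorphic gauge transformation $g$ on $\Delta^*$. In a common multi-valued flat frame, $g$ satisfies a first-order equation whose coefficients are controlled by the two residue endomorphisms; expanding $g$ as a Laurent series at $0$ and examining the lowest-order term, one sees by a standard induction that $g$ and $g^{-1}$ extend holomorphically across $0$ as soon as no two eigenvalues of the two residues differ by a nonzero integer. Because $\tau$ is a set-theoretic section of $\CC\to\CC/\ZZ$, elements of $\im(\tau)$ that are congruent modulo $\ZZ$ coincide, so the required non-resonance condition holds automatically, forcing $\calE'\cong\calE_\tau$ as extensions.

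The main obstacle is the uniqueness step: the existence is essentially a direct linear-algebra recipe, but ruling out ``spurious'' extensions requires the careful no-integer-difference argument on residue eigenvalues, and it is precisely the section property of $\tau$ that makes this work. A subtler auxiliary point in the construction is combining the semisimple and unipotent parts of $T$ into a single $\Gamma$ without introducing integer ambiguities in the resulting eigenvalues.
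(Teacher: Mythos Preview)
Your argument is correct and is precisely the classical Deligne construction. Note, however, that the paper does not actually supply a proof of this statement: it is quoted directly from \cite[Prop.~11.3]{PS08} as a background result, so there is no in-paper proof to compare against. Your write-up matches the standard proof given in that reference (trivialise on the universal cover, choose a logarithm of monodromy via $\tau$, twist flat sections to a single-valued frame, then use the no-integer-difference condition on residue eigenvalues for uniqueness).
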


\begin{definition}
The \emph{canonical extension} is the unique extension where $\res(\nabla)$ has eigenvalues in $[0,1)$, i.e. the extension obtained by choosing $\tau\colon \CC/\ZZ\ra\CC$ to have image in $\{z\mid 0\le\mathfrak{Re}(z)<1\}$.
\end{definition}


Next we specialize this discussion to the setting where the vector bundle $\calE$ arises from a variation of Hodge structure over $\Delta \setminus \{0\}$; a more detailed discussion of these concepts may be found in \cite[Sect. 11.2.1]{PS08}.

So let $(\calE_\ZZ,\calF)$ be a polarized variation of Hodge structure on $\Delta\setminus\{0\}$ (see Sect. \ref{subsecvhs}). Let $T\colon \calE_\ZZ \to \calE_\ZZ$ denote the monodromy automorphism defined by parallel transport along a counterclockwise loop about $0 \in \Delta$. Then $T$ induces an automorphism of $\calE := \calE_{\Z} \otimes \calO_{\Delta\setminus\{0\}}$. Let $T=T_sT_u$ be the Jordan decomposition of this automorphism into semi-simple and unipotent parts, i.e.
\[T=\begin{pmatrix}\lambda & 1\\ &\lambda\end{pmatrix}= \underset{T_s}{\begin{pmatrix}\lambda & \\ & \lambda\end{pmatrix}} \underset{T_u}{\begin{pmatrix}1 & \frac1\lambda\\ & 1\end{pmatrix}}.\] The following theorem gives an important property of $T$.

\begin{theorem}[Monodromy theorem] \label{monodromythm} \textup{\cite[Lemma 4.5, Thm. 6.1]{Sc73}} The monodromy operator $T$ is quasi-unipotent. More precisely, if $m:=\max\{p-q\mid\calE^{p,q}\neq 0\}$, then $T_s$ has finite order and 
\[(T_u-\id)^{m+1}=0.\]
\end{theorem}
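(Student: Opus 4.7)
The two assertions in the monodromy theorem have rather different flavours, and I would handle them in turn. The overall strategy is: first, use integrality of the monodromy together with asymptotic information from the period map to show the eigenvalues of $T$ are roots of unity; second, pass to the canonical extension and use Griffiths transversality to bound the nilpotency.

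For quasi-unipotency, observe that $T$ preserves both the lattice $\calE_\ZZ$ and the polarization $Q$, so it lies in the arithmetic group $\mathrm{Aut}(H_\ZZ, Q)$; in particular its characteristic polynomial is monic with integer coefficients, and the eigenvalues of $T$ are algebraic integers. By Kronecker's theorem it then suffices to show every eigenvalue and all its Galois conjugates have absolute value $1$, i.e.\ that $T_s$ has eigenvalues on the unit circle. For this one appeals to Schmid's nilpotent orbit theorem: although $T$ preserves no single fibre's Hodge structure, it does preserve (in the limit) a polarized structure built from the limit filtration $F_\llim$ and the weight filtration $W(N)$, which realises $T_s$ as an element of a compact subgroup of $\mathrm{Aut}(H_\RR, Q)$ preserving a positive-definite form assembled from $Q$ and the limit data. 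An integral matrix lying in a compact subgroup has finite order, so $T_s$ has finite order.

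For the nilpotency bound, replace $T$ by a suitable power so that $T$ is unipotent, and work on the canonical extension $(\calE_\tau,\nabla_\tau)$ of Defn.\ \ref{logpolesdefn}. A theorem of Schmid shows that the Hodge subbundles $\calF^p$ extend to subbundles of $\calE_\tau$, and that Griffiths transversality lifts to the logarithmic statement
\[ \nabla_\tau \calF^p \subset \calF^{p-1} \otimes_{\shO_\Delta} \Omega^1_\Delta(\log \{0\}). \]
Applying the Poincar\'e residue and restricting to the central fibre gives $\res(\nabla_\tau)(F^p) \subset F^{p-1}$ on the limit filtration; setting $N := \log T_u = -2\pi i\,\res(\nabla_\tau)$, we see that $N$ shifts the limit filtration down by one. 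To upgrade this ``shift by one'' to actual nilpotency of order at most $m+1$, I would invoke Schmid's limiting mixed Hodge structure: the monodromy weight filtration $W(N)$ satisfies $N W_k \subset W_{k-2}$ with $N^j \colon \Gr^W_{n+j} \stackrel{\sim}{\to} \Gr^W_{n-j}$, and the Hodge-theoretic constraint that $\calE^{p,q}$ is nonzero only for $p-q \in [-m,m]$ forces $W(N)$ to be supported in the interval $[n-m,n+m]$. Consequently $N^{m+1}$ maps $W_{n+m} = H_\CC$ into $W_{n-m-2}=0$, so $N^{m+1}=0$ and hence $(T_u-\id)^{m+1} = (\exp(N)-\id)^{m+1} = 0$.

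The main obstacle in both halves is the analytic input: producing the compact-type structure preserved by $T_s$ and bounding the range of $W(N)$ both rest on Schmid's deep asymptotic analysis of the period map at a degeneration. Once that machinery is available, the remaining work is essentially linear-algebraic bookkeeping with the Hodge filtration on the canonical extension, together with the elementary Kronecker observation on integral matrices with unit-circle eigenvalues.
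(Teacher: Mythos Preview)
The paper does not prove this statement; it merely cites Schmid \cite[Lemma~4.5, Thm.~6.1]{Sc73}. So there is no in-paper argument to compare against, and one must measure your sketch against Schmid's actual proof.

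Your treatment of quasi-unipotency contains a genuine circularity. You propose to place $T_s$ inside a compact group by invoking the nilpotent orbit theorem and the limiting mixed Hodge structure built from $F_\llim$ and $W(N)$. But Schmid's nilpotent orbit theorem and the construction of $F_\llim$ take as \emph{input} that the monodromy is unipotent (one first passes to a cyclic cover killing $T_s$); you cannot use them to establish that $T_s$ has finite order. (A minor additional slip: $T_s$ need not be integral, so ``an integral matrix in a compact group has finite order'' does not apply to $T_s$ directly; the Kronecker step should be phrased via the eigenvalues of $T$ itself.) Schmid's actual argument for Lemma~4.5, due to Borel, is quite different and avoids all of this: the lifted period map $\tilde\phi\colon\mathfrak{H}\to\calD$ is distance-decreasing for the Poincar\'e metric and a $G$-invariant metric on $\calD$; since the hyperbolic distance between $z$ and $z+1$ tends to $0$ as $\mathfrak{Im}(z)\to\infty$, so does $d(\tilde\phi(z),T\tilde\phi(z))$, which forces the $\mathrm{Ad}$-orbit of $T$ to accumulate on the compact isotropy group $K$ and hence the eigenvalues of $T$ to lie on the unit circle. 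Kronecker then finishes.

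Your argument for the nilpotency bound is correct as stated and the computation that $W(N)$ is supported in $[n-m,n+m]$ is fine, but be aware that this inverts Schmid's logical order: in \cite{Sc73} the bound $(T_u-\id)^{m+1}=0$ (Thm.~6.1) is proved \emph{before} the limiting mixed Hodge structure (Thm.~6.16), directly from the $\mathrm{SL}_2$-orbit theorem by producing an $\mathfrak{sl}_2$-triple containing $N$ whose representation on $H_\CC$ is constrained by the Hodge decomposition. Your route is a legitimate a~posteriori reorganization (since Thm.~6.16 needs only that $N$ be nilpotent, not the specific index), but it does not reflect how the result is actually established.
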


Now let  $\tilde{\calE}$ denote the canonical extension of $\calE$ to $\Delta$ and let $\calE_\infty=\tilde{\calE}|_{\{0\}}$ denote its central fibre. There is an integral structure $(\calE_\infty)_\ZZ$ on $\calE_\infty$, induced from $\calE_\ZZ$ by the canonical extension. 

Moreover, by \cite[Prop 11.2]{PS08}, $T$ extends to an automorphism of $\tilde{\calE}$, whose restriction to $\calE_{\infty}$ is given by $\exp(-2\pi i \res(\nabla))$. Let
\[N := \log T_u =\sum_{i\ge 1} (-1)^{i+1}\frac1i(T_u-\id)^i\]
denote the logarithm of the unipotent part $T_u$ of $T$; this is well-defined as $(T_u - \id)$ is nilpotent. We can use this $N$ to define a mixed Hodge structure on $\calE_{\infty}$ as follows:

\begin{theorem}[Schmid's limiting mixed Hodge structure] \textup{\cite[Thm. 6.16]{Sc73}} \label{limmhsthm}
The subbundles given by the Hodge filtration
\[\shF^n\subseteq\shF^{n-1}\subseteq\cdots\subseteq\shF^1\subseteq\shF^0=\calE\]
extend to holomorphic subbundles
\[\tilde{\shF}^n\subseteq\tilde{\shF}^{n-1}\subseteq\cdots\subseteq\tilde{\shF}^1\subseteq\tilde{\shF}^0=\tilde{\calE}\]
over $\Delta$. Define a decreasing filtration  $F^{\bullet}_{\operatorname{lim}}$ of $\calE_\infty$ by $F^k_{\operatorname{lim}}=\tilde{\shF}^k|_{\{0\}}$ and let $W_{\bullet}$ denote the weight filtration on $\calE_\infty$ induced by the restriction of $N$ to $\calE_\infty$, as described in the following subsection. Then
\[(\calE_\infty,W_\bullet,F^\bullet_{\operatorname{lim}})\]
defines a mixed Hodge structure, called the \emph{limiting mixed Hodge structure}.
\end{theorem}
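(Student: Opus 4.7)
The overall plan is to lift the picture to the universal cover $\HH \to \Delta\setminus\{0\}$, $u \mapsto z = \exp(2\pi i u)$, pull back the variation of Hodge structure, and exploit the fact that the monodromy action becomes a translation $u\mapsto u+1$. By the Monodromy Theorem (Thm. \ref{monodromythm}), the semisimple part $T_s$ has finite order, so after a finite base change (which does not affect the statement) one may assume $T = T_u = \exp N$ is unipotent. Let $e$ denote a multivalued flat integral frame for $\calE_\ZZ$. Twisting by $\exp(-\tfrac{1}{2\pi i}\log(z)\,N)$ kills the monodromy, producing a single-valued holomorphic frame of $\calE$ on $\Delta\setminus\{0\}$ that, by construction, extends across $0$ to a frame of the canonical extension $\tilde{\calE}$, and realises the residue of $\nabla$ at $0$ as $\tfrac{1}{2\pi i}N$.

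The first substantive step is to show that the Hodge subbundles $\shF^p$ extend holomorphically to subbundles $\tilde{\shF}^p \subset \tilde{\calE}$. For this I would consider the lifted period map $\tilde{\varphi}\colon \HH \to \check{\calD}$ and form the twisted map $\psi(u) := \exp(-uN)\cdot\tilde{\varphi}(u)$. Since $\tilde{\varphi}(u+1) = T\cdot\tilde{\varphi}(u) = \exp(N)\cdot\tilde{\varphi}(u)$, the map $\psi$ is $\ZZ$-invariant and descends to a holomorphic $\bar{\psi}\colon \Delta\setminus\{0\}\to\check{\calD}$. The content of Schmid's \emph{Nilpotent Orbit Theorem} \textup{\cite{Sc73}} is precisely that $\bar\psi$ extends holomorphically across $0$, with limit $F^\bullet_\llim$. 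Pulling back the tautological sub-flag bundles on $\check{\calD}$ along the extended $\bar\psi$ then yields the desired extensions $\tilde{\shF}^p$, whose central fibres are by definition $F^p_\llim$.

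For the weight filtration, I would invoke the purely linear-algebraic fact (a consequence of Jacobson--Morozov applied to the Jordan structure of $N$) that any nilpotent endomorphism $N$ of a finite-dimensional $\QQ$-vector space with $N^{m+1}=0$ admits a unique increasing filtration $W_\bullet$ centered at $n$, characterized by $N(W_k)\subset W_{k-2}$ together with the requirement that $N^k \colon \Gr^W_{n+k}\to\Gr^W_{n-k}$ be an isomorphism for all $k\ge 0$. Applied to $N|_{\calE_\infty}$ this produces the monodromy weight filtration on the central fibre.

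The main obstacle, and by far the deepest content of the theorem, is verifying that $F^\bullet_\llim$ actually induces \emph{pure} Hodge structures of weight $k$ on each graded piece $\Gr^W_k\calE_\infty$, in a way compatible with the polarization. The nilpotent orbit theorem alone is insufficient, since it controls only leading asymptotics of $\tilde\varphi$ along the orbit of $\exp(uN)$. Here I would appeal to Schmid's $\mathrm{SL}(2)$-\emph{Orbit Theorem} \textup{\cite{Sc73}}, which refines the nilpotent orbit by comparing it to higher order with an orbit of a real $\mathrm{SL}(2)$ acting through an $\mathfrak{sl}_2$-triple $(N,H,N^+)$ inside $\End(\calE_\infty)$. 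The eigenspace decomposition of $H$ provides a canonical splitting of $W_\bullet$, and the polarization $Q(\cdot, N^k\cdot)$ becomes positive-definite on the primitive pieces $\ker(N^{k+1})\cap\Gr^W_{n+k}$. Combined with Griffiths transversality, which forces $N$ to send $F^p_\llim$ into $F^{p-1}_\llim$ and so act as a morphism of Hodge structures of type $(-1,-1)$, this positivity yields the required decomposition $\Gr^W_k\calE_\infty\otimes\CC = \bigoplus_{p+q=k} H^{p,q}$ with $H^{p,q} = \overline{H^{q,p}}$, completing the proof.
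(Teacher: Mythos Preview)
The paper does not actually supply a proof of this theorem: it is stated with the citation \cite[Thm.~6.16]{Sc73} and left at that, with only the auxiliary construction of the monodromy weight filtration spelled out in the subsection that follows. So there is no ``paper's own proof'' to compare against. That said, your outline is an accurate high-level sketch of Schmid's original argument: reduction to the unipotent case, the twisted frame producing the canonical extension, the Nilpotent Orbit Theorem to extend the Hodge flag across $0$ and define $F^\bullet_{\llim}$, the linear-algebraic existence of the weight filtration, and finally the $\mathrm{SL}(2)$-Orbit Theorem to establish purity on the graded pieces. One minor point: with the conventions used in this paper (see the line before Thm.~\ref{limmhsthm} and Thm.~\ref{monodromy-as-connecting-homo}), the residue is $\res(\nabla) = -\tfrac{1}{2\pi i}\log T$, so your sign on the residue is off; this is a convention issue rather than a gap.
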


\subsubsection{Weight Filtration from an Endomorphism}
\label{sec-weights-from-endo}

It remains to construct the weight filtration $W$ from Thm. \ref{limmhsthm}. Let $V$ be a vector space and let $N\in\End(V)$ be nilpotent, i.e. $N^{n+1}=0$ for some $n$. Then we have:

\begin{lemma} \textup{\cite[Lemma 11.9]{PS08}}
There is an increasing filtration 
\[W_0\subseteq W_1\subseteq\cdots\subseteq W_{2n}\]
of subspaces of $V$ uniquely determined by the properties
\begin{eqnarray*}
N(W_i)\subset W_{i-2}&\quad&\ \hbox{ for }i\ge 2\hbox{ and} \\
N^k\colon \Gr^W_{n+k}\lra \Gr^W_{n-k}&&\ \hbox{  is an isomorphism,}
\end{eqnarray*}
where we use the notation $\Gr^W_{i}=W_i/W_{i-1}$. We call $W$ the \emph{weight filtration centered at $n$}.
\end{lemma}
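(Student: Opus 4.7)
My approach has two independent halves: construct an explicit candidate filtration and check both axioms, then show uniqueness by reconstructing any admissible filtration intrinsically from $N$.

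For existence, I would first reduce to a single Jordan block of $N$. Write $V = \bigoplus_\alpha J_\alpha$ as a direct sum of cyclic $\mathbb{C}[N]$-modules, and on a block $J$ of size $r+1 \leq n+1$ with cyclic basis $e_0, \ldots, e_r$ satisfying $N e_j = e_{j-1}$ and $N e_0 = 0$, assign the vector $e_j$ the weight $n - r + 2j \in \{n-r,\,n-r+2,\,\ldots,\,n+r\}$; then put $W_i \cap J := \mathrm{span}\{e_j \mid n - r + 2j \leq i\}$ and sum over blocks. A direct check shows $N(W_i) \subset W_{i-2}$ because $N$ lowers the weight by $2$ on each block, and $N^k \colon \Gr^W_{n+k} \to \Gr^W_{n-k}$ restricts to an isomorphism on each block of size $\geq k+1$ (with both sides vanishing on smaller blocks). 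More conceptually, one can instead invoke Jacobson--Morozov to embed $N$ in an $\mathfrak{sl}_2$-triple $(N,H,N^+)$ on $V$, decompose $V = \bigoplus_\lambda V_\lambda$ by $H$-eigenspaces, and set $W_i := \bigoplus_{\lambda \leq i-n} V_\lambda$; the two axioms then follow from $[H,N] = -2N$ and the standard $\mathfrak{sl}_2$-fact that $N^k \colon V_k \to V_{-k}$ is an isomorphism on each irreducible summand.

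For uniqueness, suppose $W$ and $W'$ both satisfy the axioms. The key is that condition (ii) lets one define the primitive part in weight $n+k$ as
\[P_{n+k} := \ker\bigl(N^{k+1} \colon \Gr^W_{n+k} \longrightarrow \Gr^W_{n-k-2}\bigr),\]
and iterating (ii) yields a Lefschetz-type decomposition $\Gr^W_i = \bigoplus_{k \geq 0} N^k P_{i+2k}$. Combined with condition (i), this lets me express $W_i$ intrinsically via a descending induction on the filtration index in terms of $\ker N^j$ and $\im N^j$ for various $j$; concretely I expect a closed form of the shape $W_i = \sum_j N^j\bigl(\ker N^{i-2j+n+1}\bigr)$, with $j$ ranging over the relevant nonnegative integers. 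Since the same formula applies to $W'$, the two filtrations must coincide.

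The main obstacle, in my view, is the uniqueness argument: the two axioms look mild, and extracting a genuinely $W$-independent (purely $N$-intrinsic) expression for each $W_i$ requires a careful Lefschetz-style analysis of the graded pieces, inductively peeling off primitives from the top. The existence half, by contrast, reduces cleanly to block-by-block bookkeeping once Jordan normal form (or Jacobson--Morozov) is available, and its verification is routine.
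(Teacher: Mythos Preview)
Your plan is sound and matches the paper's sketch: the paper likewise points to a Jordan-block (``basis-splitting'') construction for existence and then writes down closed formulas for the $W_i$ purely in terms of $\ker N^a$ and $\im N^b$, which is exactly your proposed mechanism for uniqueness. The Jacobson--Morozov/$\mathfrak{sl}_2$ variant you offer is a pleasant alternative the paper does not mention, but it leads to the same filtration and the same verification.

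One concrete correction: the closed form you guess, $W_i = \sum_j N^j\bigl(\ker N^{\,i-2j+n+1}\bigr)$, cannot be right as stated --- take $i=0$, $j=0$ to get $\ker N^{n+1}=V$, forcing $W_0=V$. The exponent should have the opposite dependence on $j$ and $n$: the paper's list
\[
W_0=\im N^n,\quad W_1=\im N^{n-1}\cap\ker N,\quad W_2=\im N^{n-2}\cap\ker N+\im N^{n-1},\ \ldots,\ W_{2n-1}=\ker N^n
\]
is equivalent to
\[
W_i \;=\; \sum_{k\ge 1}\,\im N^{\,n-i+k-1}\cap\ker N^{k}
\;=\; \sum_{j\ge \max(0,\,n-i)} N^{j}\bigl(\ker N^{\,2j+i-n+1}\bigr).
\]
With that fix your Lefschetz/primitive-decomposition argument for uniqueness goes through exactly as you describe; it is also the engine behind the ``induction argument'' the paper alludes to as the standard proof.
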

This is typically proved using an induction argument. Somewhat more hands-on, one can also prove it by splitting $V$ using an appropriate choice of basis; this is visualized in Fig. \ref{fig:weight-filt}, which is inspired by \cite[Sect. II.2.7]{Ku98}.

\begin{figure}[t]
\includegraphics[width=0.63\textwidth]{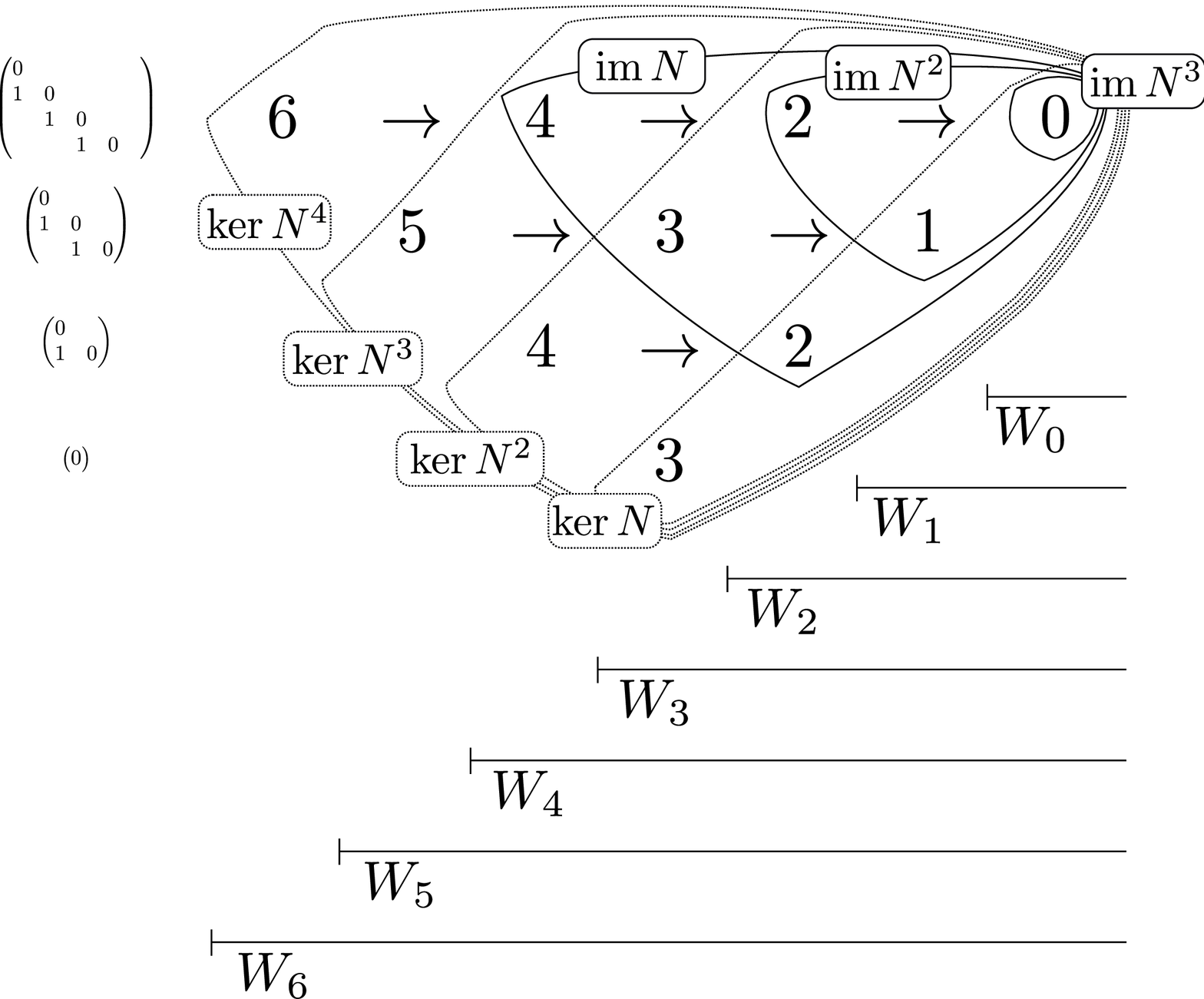}
\caption{A weight filtration centered at $3$ given by splitting a vector space $V$. It shows the schematic structure of the weight filtration in relation to kernels and images of powers of $N$. The horizontal arrows are $N$. The numbers indicate which graded piece of $W$ the part of $V$ lies in. The leftmost column shows a Jordan block for the row.} 
\label{fig:weight-filt}    
\end{figure}

One deduces the following for the weight filtration:
\begin{eqnarray*}
W_0&=&\im N^n\ \ \,(\cap \ker N),\\
W_1&=&\im N^{n-1}\cap \ker N,\\
W_2&=&\im N^{n-2}\cap \ker N\,+\,\im N^{n-1}(\cap \ker N^2),\\
W_3&=&\im N^{n-3}\cap \ker N\,+\,\im N^{n-2} \cap \ker N^2,\\
W_4&=&\im N^{n-4}\cap \ker N\,+\,\im N^{n-3} \cap \ker N^2\,+\,\im N^{n-2}(\cap \ker N^3),\\
&\vdots&\\
W_{2n-1}&=&\ker N^n,\\
W_{2n}&=&V
\end{eqnarray*}

\subsection{Limiting Mixed Hodge Structure of a Normal Crossing Degeneration}

Deligne \cite{DelTH3} extended the proof of the existence of a natural mixed Hodge structure on the cohomology of smooth varieties (Thm. \ref{deligne-on-smooth}) to work for arbitrary varieties. In particular, there is a mixed Hodge structure on the cohomology of a normal crossing divisor, that we will define in this section. We will then see that, for a degeneration of a smooth variety to a variety with normal crossings, Schmid's limiting mixed Hodge structure can be given concretely as the hypercohomology of a bifiltered complex; this parallels the method we used to compute the mixed Hodge structure of a smooth variety in Ex. \ref{omegalogDex}. Finally, we will see that these two mixed Hodge structures are related by several exact sequences.

\subsubsection{Mixed Hodge Structures of a Normal Crossing Divisor}\label{mhsncd}

We begin by defining Deligne's \cite{DelTH3} mixed Hodge structure on the cohomology of a normal crossing variety, following Steenbrink \cite[Sect. 3]{St75}. Let $Y$ be a simple normal crossing space with irreducible components $Y_1,\ldots,Y_N$. For $k \geq 0$, define the \emph{codimension $k$ stratum} of $Y$ to be
\[\tilde{Y}^{(k)} = \coprod_{i_0 < \cdots < i_{k}} Y_{i_0} \cap \cdots \cap Y_{i_{k}}\]
and let $a_k \colon \tilde{Y}^{(k)} \ra Y$ denote the natural map. Note that $a_1 \colon \tilde{Y}^{(0)} \to Y$ is the usual normalization map. Let $\delta_j\colon \tilde{Y}^{(k)} \to \tilde{Y}^{(k-1)}$ ($0 \leq j \leq k$) denote the map whose components are the inclusions
\[Y_{i_0} \cap \cdots \cap Y_{i_{k}} \lra Y_{i_0} \cap \cdots \cap \hat{Y}_{i_j} \cap \cdots \cap Y_{i_{k}},\]
where $\hat{Y}_{i_j}$ denotes the omission of $Y_{i_j}$.

Define a double complex
\[B^{p,q}= (a_{q})_*\Omega^p_{\tilde{Y}^{(q)}}\]
with $d\colon B^{p,q}\ra B^{p+1,q}$ induced by the de Rham differential $\partial\colon \Omega_{\tilde{Y}^{(q)}}^p \to \Omega_{\tilde{Y}^{(q)}}^{p+1}$, and $\delta\colon B^{p,q}\ra B^{p,q+1}$ given by the alternating restriction map
\[\delta :=\sum_{j=0}^{q+1} (-1)^{q+j+1}\delta_j^*.\] 

Let $B^{\bullet}$ denote the associated total complex. Then the discussion in \cite[Ex. 3.5]{St75} shows that $H^n(Y,\CC) \cong \HH^n(Y,B^{\bullet})$.

Next consider the two filtrations $W_\bullet$ and $F^\bullet$ defined on $B^{\bullet,\bullet}$ via
\[ W_k B^{p,q}=\left\{\begin{array}{lcl} B^{p,q} &\quad&\hbox{for } q\le -k\\ 0 &&\hbox{otherwise},\end{array}\right.
\qquad\qquad
F^k B^{p,q}=\left\{\begin{array}{lcl} B^{p,q} &\quad&\hbox{for } p\ge k\\ 0 &&\hbox{otherwise}.\end{array}\right.\]
This induces a cohomological mixed Hodge complex $(B^{\bullet},W,F)$ on $B^{\bullet}$, which gives a mixed Hodge structure of weight $n$ on the cohomology $H^n(Y,\CC)$. Indeed, if we denote the irreducible components of $\tilde{Y}^{(q)}$ by $\tilde{Y}^{(q)}_i$, then
\begin{equation}\Gr_{-q}^WB^{p+q}=B^{p,q}= \bigoplus_i(a_{q})_*\Omega^p_{\tilde{Y}^{(q)}_i},
\label{B-gradeds} \end{equation}
so $\Gr_{-q}^WB^{\bullet}$ is just a direct sum of de Rham complexes of smooth projective manifolds. Thus, by the discussion in Sect. \ref{specseqsec}, we have
\[\HH^n(Y,\Gr_{-q}^WB^\bullet) = \bigoplus_i \HH^n(Y, \Omega^{\bullet - q}_{\tilde{Y}^{(q)}_i}) = \bigoplus_i H^{n-q}(\tilde{Y}^{(q)}_i,\CC),\] 
and the natural pure Hodge structure on $H^{n-q}(\tilde{Y}^{(q)}_i,\CC)$ is induced by $F$. This is exactly what is required for the $W$-graded pieces of a mixed Hodge structure of weight $n$ (as well as for a cohomological mixed Hodge complex).

\begin{example} 
\label{example-MHS4}
Consider the subvariety $Y=V(xyz)$ of $\PP^2=\CC\Proj[x,y,z]$ given as the union of the three coordinate lines; this is a necklace of three copies of $\PP^1$. Our aim is to compute the mixed Hodge structure of $H^n(Y,\CC) = \HH^n(Y,B^{\bullet})$ (for $B^{\bullet}$ as above). 

In order to do this, consider the filtered complex $(B^\bullet, W_\bullet)$. The filtration $W_{\bullet}$ is increasing, but we can convert it into a decreasing filtration $W^{\bullet}$ by setting $W^i:=W_{-i}$. From this, by Thm.~\ref{thm:specseq}, we obtain a spectral sequence
\[E_1^{p,q}=\HH^{p+q}(Y,\Gr^W_{-p} B^\bullet) \Rightarrow \HH^{p+q}(Y,B^\bullet)\]
for the filtered complex $(B^\bullet, W^\bullet)$. By \cite[Thm.\,3.18]{PS08}, this sequence degenerates at $E_2$, so we have $E_2^{p,q} = \Gr^W_{-p}\HH^{p+q}(Y,B^{\bullet})$.
 
To compute this spectral sequence, consider the complex $B^{\bullet,\bullet}$ (for $B^{\bullet,\bullet}$ as above), given by
\[\Omega^\bullet_{V(x)} \oplus \Omega^\bullet_{V(y)}\oplus \Omega^\bullet_{V(z)}\stackrel{\delta}{\longrightarrow} \Omega^\bullet_{V(x,y)}\oplus \Omega^\bullet_{V(y,z)}\oplus \Omega^\bullet_{V(x,z)}.\]
Noting that $E_1^{p,q} = \HH^{p+q}(Y,\Gr^W_{-p} B^\bullet) = \HH^q(Y,B^{\bullet,p})$, taking hypercohomology of the columns yields
\begin{center}
\fbox{
\begin{minipage}[c][0.25\textheight]{1\textwidth}
\xymatrix@C=30pt
{
E_1^{p,2}\colon \HH^2(B^{\bullet,0}) \ar^\delta[r]&\HH^2(B^{\bullet,1})\\
E_1^{p,1}\colon \HH^1(B^{\bullet,0}) \ar^\delta[r]&\HH^1(B^{\bullet,1})\\
E_1^{p,0}\colon \HH^0(B^{\bullet,0}) \ar^\delta[r]&\HH^0(B^{\bullet,1})\\
}
\end{minipage}}
\ $\leadsto$\ 
\fbox{
\begin{minipage}[c][0.25\textheight]{1\textwidth}
\xymatrix@C=30pt
{
 \protect\substack{H^2(V(x),\CC)\\ \oplus H^2(V(y),\CC)\\ \oplus H^2(V(z),\CC)} \ar^\delta[r]&0\\
 0&0\\
 \protect\substack{H^0(V(x),\CC)\\ \oplus H^0(V(y),\CC)\\ \oplus H^0(V(z),\CC)}\ar^\delta[r] & \protect\substack{H^0(V(x,y),\CC)\\ \oplus H^0(V(y,z),\CC)\\ \oplus H^0(V(x,z),\CC)}
}
\end{minipage}}
\end{center}
To compute $E_2^{p,q}$, and hence $\Gr^W_{-p}\HH^{p+q}(Y,B^{\bullet})$, we compute the cohomology of these complexes. Noting that the bottom row computes the usual cohomology of a circle $S^1$, we find
\begin{eqnarray*}h^{p,q}H^2(Y,\CC) =& \Gr_F^p\Gr^W_{p+q-2}\HH^{2}(Y,B^{\bullet}) &= \left\{\begin{array}{ll} 3 &\quad p=q=1\\ 0&\quad\hbox{otherwise,}\end{array}\right.\\
h^{p,q}H^1(Y,\CC) =& \Gr_F^p\Gr^W_{p+q-1}\HH^{1}(Y,B^{\bullet}) &= \left\{\begin{array}{ll} 1 &\quad p=q=0\\ 0&\quad\hbox{otherwise,}\end{array}\right.\\
h^{p,q}H^0(Y,\CC)  =&  \Gr_F^p\Gr^W_{p+q}\HH^{0}(Y,B^{\bullet}) &= \left\{\begin{array}{ll} 1 &\quad p=q=0\\ 0&\quad\hbox{otherwise.}\end{array}\right.
\end{eqnarray*}

Note that the Hodge structures on all these cohomology groups are pure, but the one on $H^1(Y,\CC)$ is concentrated in weight zero rather than weight one, so it lies below the cohomology degree which is typical for compact spaces. To obtain an example with mixed weights, one may take a product of $Y$ with an elliptic curve.
\end{example}

Before we can continue our discussion of mixed Hodge structures on a normal crossing space, we need a definition.

\begin{definition} 
The \emph{dual intersection complex} $\Gamma$ of $Y$ is the simplicial complex with
\begin{itemize}
\item vertices indexed by the components $Y_i$ of $Y$, and
\item one $k$-simplex, with vertices $Y_{i_1},...,Y_{i_{k+1}}$, for each connected component of $Y_{i_1}\cap...\cap Y_{i_{k+1}}$.
\end{itemize} 
In a slight abuse of notation, we also use $\Gamma$ to denote the topological space given by the dual intersection complex.
\end{definition}

In Ex.~\ref{example-MHS4}, the dual intersection complex is a necklace of three intervals, glued to give a circle $\Gamma \simeq S^1$. In Sect. \ref{sect:Hodgetoric}, we will discuss another kind of degeneration, called a \emph{toric degeneration}. In such degenerations, the dual intersection complex will be endowed with further structure in addition to being a topological space (and will be denoted by $B$).
 
We saw in Ex.~\ref{example-MHS4} that the cohomology of the dual intersection complex $\Gamma \simeq S^1$ played a role in the calculation of the hypercohomology of $B^{\bullet}$. This is an example of a general phenomenon. Indeed, the lemma below shows that the dual intersection complex $\Gamma$ determines the ranks of the lowest grade pieces of the mixed Hodge structure on $H^n(Y,\CC)$, as follows.

\begin{lemma} 
\label{gradeds-of-Y}
$\Gr_{-p}^W H^p(Y,\CC)=H^p(\Gamma,\CC)$.
\end{lemma}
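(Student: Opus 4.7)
The plan is to compute both sides by running the weight spectral sequence of the cohomological mixed Hodge complex $(B^\bullet, W, F)$ introduced in the preceding paragraphs, exactly mirroring the calculation done in Ex.~\ref{example-MHS4}. From Thm.~\ref{thm:specseq} applied to the filtered complex $(B^\bullet, W)$ (after reindexing $W$ as a decreasing filtration), one obtains a spectral sequence
\[
E_1^{p,q} = \HH^{p+q}\bigl(Y, \Gr^W_{-p} B^\bullet\bigr) \Rightarrow \HH^{p+q}(Y, B^\bullet) = H^{p+q}(Y,\CC),
\]
and by \cite[Thm.~3.18]{PS08} this sequence degenerates at $E_2$, so $E_2^{p,q} = \Gr^W_{-p} H^{p+q}(Y,\CC)$.

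Next I would identify the $E_1$-page using the description of the $W$-graded pieces from equation \eqref{B-gradeds}: since $\Gr^W_{-p} B^{\bullet}$ is, up to the shift that places $B^{r,p}$ in total degree $r+p$, the direct sum of the de Rham complexes of the smooth components of $\tilde{Y}^{(p)}$, one gets
\[
E_1^{p,q} = \HH^{p+q}\bigl(Y,\Gr^W_{-p}B^\bullet\bigr) = H^q\bigl(\tilde{Y}^{(p)},\CC\bigr).
\]
Specializing to $q=0$ yields $E_1^{p,0} = H^0(\tilde{Y}^{(p)},\CC)$, which is a copy of $\CC$ for each connected component of a $(p+1)$-fold intersection $Y_{i_0}\cap\cdots\cap Y_{i_p}$, i.e.\ one copy of $\CC$ for each $p$-simplex of the dual intersection complex $\Gamma$.

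The key observation is then that the differential $d_1\colon E_1^{p,0}\to E_1^{p+1,0}$, induced by the restriction map $\delta = \sum_{j=0}^{p+1} (-1)^{p+j+1}\delta_j^*$ on the double complex, is precisely the simplicial coboundary operator on the cochain complex of $\Gamma$ with $\CC$-coefficients. Therefore $(E_1^{\bullet,0},d_1)$ is the simplicial cochain complex of $\Gamma$, giving $E_2^{p,0} = H^p(\Gamma,\CC)$. Combining with $E_2$-degeneration,
\[
\Gr^W_{-p} H^p(Y,\CC) = E_2^{p,0} = H^p(\Gamma,\CC),
\]
as claimed. The only subtle point — and the step I would verify most carefully — is the sign/indexing check identifying $d_1$ with the simplicial coboundary of $\Gamma$; once that is in hand, the rest is bookkeeping with the spectral sequence.
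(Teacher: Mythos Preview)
Your proposal is correct and follows essentially the same approach as the paper: run the weight spectral sequence for $(B^\bullet,W)$, invoke \cite[Thm.~3.18]{PS08} for $E_2$-degeneration, identify the $q=0$ row of the $E_1$-page with the simplicial (\v{C}ech) cochain complex of $\Gamma$ via equation \eqref{B-gradeds} and the alternating restriction map $\delta$, and conclude. The paper phrases the identification as the ``\v{C}ech complex of $\Gamma$'' rather than the simplicial cochain complex, but these coincide here, and your extra remark about checking signs on $d_1$ is a reasonable caution but not a genuine difference in strategy.
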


\begin{proof} 
Consider the spectral sequence
\[E_1^{p,q}=\HH^{p+q}(Y,\Gr^W_{-p} B^\bullet) \Rightarrow \HH^{p+q}(Y,B^\bullet)\]
as constructed in Ex. \ref{example-MHS4}. By \cite[Thm.\,3.18]{PS08}, this sequence degenerates at $E_2$.

Now, recall from Eq. \eqref{B-gradeds} that
\[\Gr_{-p}^WB^{p+q}=B^{q,p}= \bigoplus_i(a_{p})_*\Omega^q_{\tilde{Y}^{(p)}_i},\]
so the complex
\[E_1^{0,0}\stackrel{d_1}{\longrightarrow}  E_1^{1,0}\stackrel{d_1}{\longrightarrow}  E_1^{2,0}\stackrel{d_1}{\longrightarrow} \cdots\]
can be identified with the complex
\[
0\lra\HH^0(B^{\bullet,0})\stackrel{\delta}{\longrightarrow} \HH^0(B^{\bullet,1}) \stackrel{\delta}{\longrightarrow}\HH^0(B^{\bullet,2}) \lra \cdots
\]
which, by our earlier discussion, can be identified with
\[
0\lra 
\bigoplus_{1\le i_0\le N}H^0(Y_{i_0},\CC)\stackrel{\delta}{\longrightarrow} 
\bigoplus_{1\le i_0<i_1\le N}H^0(Y_{i_0}\cap Y_{i_1},\CC)
\lra \cdots.\]
This in turn can be identified with the \v{C}ech complex of $\Gamma$, so its cohomology is $H^\bullet(\Gamma,\CC)$. Thus, we see that $E_2^{p,0} = H^p(\Gamma,\CC)$.

But the spectral sequence $E_2^{p,q}$ degenerates at $E_2$, so we also have 
\[E_2^{p,0} = \Gr_{-p}\HH^p(Y,B^{\bullet}) = \Gr_{-p}H^p(Y,\CC),\]
as required.\end{proof}

\subsubsection{Cohomological Mixed Hodge Complex of the Limiting Mixed Hodge Structure}

Next we consider the case where our normal crossing divisor arises as the central fibre of a degeneration. Our aim is to give a cohomological mixed Hodge complex $A^\bullet$ that computes the limiting mixed Hodge structure, as defined in Sect. \ref{schmidswork}, as the limit of the polarized variation of Hodge structures on $\Delta\setminus\{0\}$ coming from the variation of cohomology of the (smooth) nearby fibres. Moreover, we will see that there is a close relationship between $A^{\bullet}$ and the cohomological mixed Hodge complex $B^{\bullet}$ introduced in the last subsection. The interested reader may find a more detailed discussion in  \cite[Sect. 4]{St75} and \cite[Sect. 11.2]{PS08}.

Let $\Delta$ be the unit disc and let $b$ be a coordinate on $\Delta$. Assume that we have a projective map $f\colon X\ra \Delta$ onto $\Delta$ that is smooth away from a reduced simple normal crossing central fibre $Y=f^{-1}(0)$. Assume further that $X$ is smooth, so that locally $f$ has the form $f=z_1\ldots  z_k$, for $z_i$ part of a set of local coordinates.

Begin by considering the logarithmic de Rham complex $\Omega^\bullet_X(\log Y)$. In a similar way to Ex. \ref{omegalogDex}, it carries an increasing filtration $W^Y$ by the order of poles,
\begin{align*}
W^Y_k\Omega^{p}_{{X}}(\log Y) = {} &
\Omega^{k}_{{X}}(\log Y)\wedge \Omega^{p-k}_{{X}}.
\end{align*}
Moreover, there is also the Hodge filtration
\[F^k\Omega^{\bullet}_{{X}}(\log Y)=\Omega^{\bullet\ge k}_{{X}}(\log Y).\]

Consider the double complex\footnote{Note that here we use the original notation by Steenbrink \cite{St75}; the two indices $p,q$ are swapped in \cite{PS08}.}
\[A^{p,q}= \Omega^{p+q+1}_{{X}}(\log Y)\,/\,W^Y_q \Omega^{p+q+1}_{{X}}(\log Y),\]
where the first differential is the usual holomorphic de Rham differential and the second differential is given by wedging with $\dlog f = f^*\dlog b$.

Let $A^\bullet$ denote the total complex of $A^{\bullet,\bullet}$. We have three filtrations $W$, $W^Y$ and $F$ on $A^{\bullet}$, given by the rule
\[W_k A^r = \bigoplus_{p+q=r} W^Y_{2q+k+1} A^{p,q} =  \bigoplus_{p+q=r}W^Y_{2q+k+1} \Omega^{p+q+1}_{{X}}(\log Y)\,/\,W^Y_q\Omega^{p+q+1}_{{X}}(\log Y)\]
for $W$, and respectively for $W^Y$ and $F$ in terms of the corresponding filtrations on $\Omega^{p+q+1}_{{X}}(\log Y)\,/\,W^Y_q\Omega^{p+q+1}_{{X}}(\log Y)$, yielding
\[W^Y_kA^r = \bigoplus_{p+q=r} W^Y_{k+q+1} A^{p,q}
\qquad \mathrm{and} \qquad
F^kA^r = \bigoplus_{p+q=r} F^{k+q+1}A^{p,q}.  \]

The injection
\[\dlog f\wedge\colon \Omega^p_{{X}/\Delta}(\log Y)\otimes \calO_Y \lra A^{p,0}\]
turns $A^{\bullet,\bullet}$ into a resolution of $\Omega^\bullet_{ X/\Delta}(\log Y)\otimes_{\shO_X} \calO_Y$, so
\[\HH^n(Y,A^\bullet)= \HH^n(Y,\,\Omega^\bullet_{ X/\Delta}(\log Y)\big|_Y)\]
and one can show that this is equal to the $n$th cohomology of the pullback of $X\setminus Y$ to the universal cover of $\Delta\setminus \{0\}$, which is in turn homotopic to the nearby fibre. In fact, the association
\[\Delta\ni b\longmapsto \HH^n (f^{-1}(b),\,\Omega^\bullet_{ X/\Delta}(\log Y)\big|_{f^{-1}(b)})\]
gives a vector bundle with connection that has log poles on $\Delta$. This vector bundle is the canonical extension (see Sect. \ref{schmidswork}) of the flat bundle defined by the $n$th cohomology variation of Hodge structure over the punctured disc
\[(\Delta\setminus\{0\})\ni b \longmapsto \HH^n(f^{-1}(b),\,\Omega^\bullet_{ (X\setminus Y)/(\Delta\setminus\{0\})}\big|_{f^{-1}(b)}).\]

By \cite[Thm. 4.19]{St75}, $A^\bullet$ is the $\CC$-part of a cohomological mixed Hodge complex that computes the limiting mixed Hodge structure of Thm. \ref{limmhsthm}.
Furthermore, there is an endomorphism of this double complex $\nu\colon A^{p,q}\ra A^{p-1,q+1}$,  given by the natural projection modulo $W^Y_{q+1}$. 
One can show that it is related to the monodromy operator $T$ by
\[\log T=2\pi i \nu,\]
so $\nu$ coincides with $\res(\nabla)$ up to sign; further details may be found in \cite[Thm. 11.21 and Cor. 11.17]{PS08}. 

In this case, one can show that $T$ is actually unipotent; otherwise (e.g. when $Y$ is not reduced) we would have to take its unipotent part as in Sect. \ref{schmidswork}. We find that
\[\ker(\nu)^{\bullet}= W^Y_0 A^{\bullet}\]
is itself a cohomological mixed Hodge complex with the filtrations $W$ and $F$ induced from $A^{\bullet}$, i.e. the injection
\[\operatorname{spe}\colon \ker(\nu)^{\bullet}\lra A^\bullet\]
induces an injection on bigraded pieces. 
In fact, there is an isomorphism
\[\ker(\nu)^{\bullet}\cong B^\bullet,\]
where $B^\bullet$ is the cohomological mixed Hodge complex computing the mixed Hodge structure on the cohomology of $Y$ introduced in Sect. \ref{mhsncd}. This isomorphism is given by taking residues
\[W_0^YA^{p,q}=\frac{W_{q+1}^Y\Omega^{p+q+1}_X(\log Y)}{W_{q}^Y\Omega^{p+q+1}_X(\log Y)}\stackrel{R}{\longrightarrow} B^{p,q} = (a_{q})_*\Omega^p_{\tilde{Y}^{(q)}}.\]

Using this it can be shown \cite[Lemma 4.4]{GKR12} that there is a short exact sequence of cohomological mixed Hodge complexes
\[0\lra B^\bullet\stackrel{\operatorname{spe}}{\longrightarrow} A^\bullet \lra \frac{A^\bullet}{B^\bullet}\lra 0\]
that gives a long exact sequence of mixed Hodge structures on the cohomology sequence
\[\cdots\lra H^i(Y,\CC)\lra H^i(Y,\psi_f\CC)\lra H^{i}(Y,\phi_f\CC)\lra\cdots,\]
where $\psi_f\CC$ and $\phi_f\CC$ are the (perverse) sheaves of nearb$\psi$ and v$\phi$nishing cycles respectively; we refer the interested reader to \cite{Del73b} \cite{Del73a} or \cite[Ch. 13]{PS08} for more information about these sheaves. 

In particular, we note that
\[H^i(Y,\psi_f\CC)\cong H^i(f^{-1}(b),\CC)\] 
for $b\neq 0$, and its mixed Hodge structure is the limiting mixed Hodge structure from Thm. \ref{limmhsthm}. We will call this long exact sequence \emph{Steenbrink's sequence}. 

\begin{example} 
\label{example-MHS5}
Consider the degenerating family given by projecting 
\[X=V(xyz-b)\subset\PP^2\times\Delta \to \Delta.\] 
Note that the central fibre $Y$ of this family is exactly the variety $Y$ from Ex. \ref{example-MHS4}. 

The cohomology of the nearby fibre is the hypercohomology of the logarithmic de Rham complex
\[0\lra\shO_Y\stackrel{d}{\longrightarrow} \left.\Omega_{X/\Delta}(\log Y)\right|_Y\lra 0\]
where $\Omega^2_{X/\Delta}(\log Y)|_Y=0$ because the relative dimension of $f\colon X\ra\Delta$ is one.
To obtain the weight filtration, we use the resolution of this complex by the double complex $A^{\bullet,\bullet}$ as given above. It takes the shape
\begin{center}
\centerline{
\xymatrix@C=30pt
{
0\\
\frac{\Omega^2_{X}(\log Y)}{W^Y_1\Omega^2_{X}(\log Y)}\ar^d[r]\ar^{\wedge \frac{db}b}[u] & 0\\
\frac{\Omega_{X}(\log Y)}{\Omega_{X}}\ar^d[r]\ar^{\wedge \frac{db}b}[u]& \frac{\Omega^2_{X}(\log Y)}{\Omega^2_{X}}\ar[r]\ar[u]&0\\
}}
\end{center}
and by taking residues this becomes
\begin{center}
\centerline{
\xymatrix@C=30pt
{
0\\
\bigoplus_{1\le i<j\le 3} \shO_{Y_i\cap Y_j}\ar^d[r]\ar^{\delta}[u] & 0\\
\bigoplus_{i=1}^3 \shO_{Y_i}\ar^d[r]\ar^{\delta}[u]& \Omega_Y(\log Y) \ar[r]\ar[u]&0\\
}}
\end{center}
where $\Omega_Y(\log Y)$ denotes the subsheaf of $\bigoplus_{i=1}^3 \Omega_{Y_i}(\log \bigcup_{k\neq i} Y_i\cap Y_k)$ given by sections whose residues at the same point add up to zero. A choice of non-trivial global section of $\Omega_Y(\log Y)$ identifies it with $\shO_Y$, for which we already know an acyclic resolution: namely the left column of $A^{\bullet,\bullet}$ 
\[0\lra\shO_Y\ra \bigoplus_{1\le i\le 3} \shO_{Y_i}\stackrel{\delta}{\lra} \bigoplus_{1\le i<j\le 3} \shO_{Y_i\cap Y_j}\lra 0.\]

Now, note that 
\[\CC^3\cong \bigoplus_{1\le i\le 3} \Gamma(Y,\shO_{Y_i})\stackrel{\delta}{\lra} \bigoplus_{1\le i<j\le 3} \Gamma(Y,\shO_{Y_i\cap Y_j})\cong\CC^3\]
computes the usual cohomology of a circle, so we obtain the following Hodge numbers
\begin{eqnarray*}h^{p,q}\HH^2(A^\bullet)&=&\left\{\begin{array}{ll} 1 &\quad p=q=1\\ 0&\quad\hbox{otherwise,}\end{array}\right.\\
h^{p,q}\HH^1(A^\bullet)&=&\left\{\begin{array}{ll} 1 &\quad p=q=0\hbox{ or }p=q=1\\ 0&\quad\hbox{otherwise,}\end{array}\right.\\
h^{p,q}\HH^0(A^\bullet)&=&\left\{\begin{array}{ll} 1 &\quad p=q=0\\ 0&\quad\hbox{otherwise.}\end{array}\right.
\end{eqnarray*}
We see that this has the same ranks as the cohomology of an elliptic curve, which is indeed the nearby fibre. However, the Hodge structure on $\HH^1(A^\bullet)$ is genuinely mixed, unlike the pure Hodge structure of an elliptic curve.

The logarithm of the monodromy $N$ gives an isomorphism
$$\Gr_{1}^W\HH^1(A^\bullet)\lra \Gr_{-1}^W\HH^1(A^\bullet)(-1),$$
so $T\in \End(H^1(Y,\psi_f\CC))$ is given by the matrix
$$\begin{pmatrix} 1 & 1\\ 0 & 1\end{pmatrix}$$
which is the cohomological result of a Dehn twist.

As an enlightening exercise, the reader may wish to give a description of Steenbrink's long exact sequence of mixed Hodge structures in this case, by combining the results of this example with those of Ex. \ref{example-MHS4}.
\end{example}

In this section, we have defined the monodromy operation using the operator $\nu\colon A^{p,q}\ra A^{p+1,q-1}$. In Sect. \ref{sect:Hodgetoric}, however, we will deal with a situation where $A^{\bullet,\bullet}$ is not available, but log differential forms still are. In this case, to define the monodromy operation, one considers the exact sequence
\begin{equation}
\label{relative-log-diff-seq}
0\ra f^*\Omega^1_\Delta(\log \{0\})\otimes_{\shO_X}  \Omega^r_{X/\Delta}(\log Y)\ra \Omega^{r+1}_X(\log Y) \ra \Omega^{r+1}_{X/\Delta}(\log Y)\ra 0
\end{equation}
given by wedging forms. Then one has:
 
\begin{theorem}\textup{\cite[(2.19)-(2.21)]{St75}}
\label{monodromy-as-connecting-homo}
The connecting homomorphism on cohomology
\[\HH^r(Y,\Omega^{\bullet}_{X/\Delta}(\log Y)\big|_Y)\longrightarrow \Omega^1_\Delta(\log \{0\})\otimes\HH^r(Y,\Omega^{\bullet}_{X/\Delta}(\log Y)\big|_Y)\]
obtained from \eqref{relative-log-diff-seq} followed by evaluation on $b\partial_b$ coincides with $\res(\nabla)=\frac{-1}{2\pi i}\log T$.
\end{theorem}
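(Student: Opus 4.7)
The plan is to identify the connecting homomorphism of \eqref{relative-log-diff-seq} with the residue of the (logarithmic) Gauss-Manin connection via the Katz--Oda construction, and then invoke the general relation between the residue and the monodromy logarithm that was already recalled in Section \ref{schmidswork}.

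First I would apply $Rf_*$ to the short exact sequence \eqref{relative-log-diff-seq}. Using the projection formula on the leftmost term and the fact that $R^r f_* \Omega^\bullet_{X/\Delta}(\log Y)$ is the canonical extension $\tilde{\calE}$ of the Gauss-Manin bundle, the resulting long exact sequence produces a boundary map
\[\tilde\nabla\colon \tilde{\calE}\lra \Omega^1_\Delta(\log\{0\})\otimes_{\shO_\Delta}\tilde{\calE}.\]
The core step is then to identify $\tilde\nabla$ with the logarithmic extension of the Gauss-Manin connection. This is the Katz--Oda description of $\nabla$: a relative cohomology class on a fibre is represented by a (local) relative form $\omega$; one picks an absolute lift $\tilde{\omega}\in\Omega^r_X(\log Y)$ mapping to $\omega$ under the surjection in \eqref{relative-log-diff-seq}, and the image of $\omega$ under the connecting map is then the class of $d\tilde{\omega}$, which lies in the subsheaf $f^*\Omega^1_\Delta(\log\{0\})\otimes_{\shO_X}\Omega^r_{X/\Delta}(\log Y)$ by construction. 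A \v{C}ech computation on a good cover, choosing local lifts and comparing them on overlaps, shows that this assignment satisfies the Leibniz rule and is independent of the lift modulo exact relative forms.

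Having identified $\tilde\nabla$ with the log-extended Gauss-Manin connection, I would restrict the push-forward to $\{0\}\subset\Delta$. The resulting map $\calE_\infty\to(\Omega^1_\Delta(\log\{0\})|_0)\otimes\calE_\infty$ becomes, after contraction with the vector field $b\partial_b$ (which is dual to the generator $\frac{db}{b}$ of $\Omega^1_\Delta(\log\{0\})|_0$), precisely the endomorphism $\res(\nabla)\in\End(\calE_\infty)$ of Definition \ref{logpolesdefn} and the discussion that follows it.

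Finally, I would invoke the fact recalled in Section \ref{schmidswork} (from \cite[Prop.\,11.2]{PS08}) that the monodromy automorphism extends to $\tilde{\calE}$ and restricts on $\calE_\infty$ to $\exp(-2\pi i\,\res(\nabla))$. In the present unipotent setting this yields $\res(\nabla)=-\frac{1}{2\pi i}\log T$, completing the identification. The main obstacle is the \v{C}ech-theoretic verification in the first paragraph: one must check carefully that local absolute lifts with the correct log-pole behaviour along $Y$ exist and that their differentials land in the correct subsheaf, so that the connecting map genuinely extends the Gauss-Manin connection as a map with log poles along $\{0\}$ rather than merely a meromorphic one.
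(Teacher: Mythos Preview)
The paper does not supply a proof of this theorem; it simply records the statement with a citation to Steenbrink \cite[(2.19)--(2.21)]{St75}. Your proposal is therefore not competing against an argument given in the text. That said, the route you outline --- applying $Rf_*$ to \eqref{relative-log-diff-seq}, identifying the resulting boundary map with the logarithmic Gauss--Manin connection via the Katz--Oda description, and then reading off the residue by contracting with $b\partial_b$ --- is exactly the standard argument and is essentially what Steenbrink does in the cited passages. Your final step, invoking the identification $T|_{\calE_\infty}=\exp(-2\pi i\,\res(\nabla))$ from \cite[Prop.~11.2]{PS08} together with the unipotency of $T$ in the present reduced normal crossing setting, is also the correct way to pass from $\res(\nabla)$ to $-\frac{1}{2\pi i}\log T$.

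The one place where you should be more precise is the claim that $R^r f_*\Omega^\bullet_{X/\Delta}(\log Y)$ \emph{is} the canonical extension $\tilde{\calE}$. This is true, but it is itself a theorem (due to Steenbrink, see \cite[(2.18)]{St75} or \cite[Thm.~11.16]{PS08}) rather than an immediate consequence of the definitions, and it is logically prior to the identification you want. In your write-up you should cite it explicitly rather than folding it into the projection-formula step. With that caveat, the proposal is sound.
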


\subsubsection{Maximally Unipotent Degenerations}

Besides Steenbrink's, there are other long exact sequence of mixed Hodge structures involving the monodromy operator $N = \log T$. One such is known as the \emph{Wang sequence}:
\[ \cdots\lra H^k(X\setminus Y)\lra H^k(\psi_f)\lra H^k(\psi_f)(-1) \lra\cdots\]
The most aesthetic way of presenting it is by intertwining it with the long exact sequence of the pair $(X,X\setminus Y)$ and the \emph{Clemens-Schmid exact sequence} to obtain the diagram shown in Fig. \ref{fig:les-nc-deg} (see also \cite[Cor. 11.44]{PS08}).
\begin{figure}[t]
\centering
\vspace{3mm}
\includegraphics[width=0.9\textwidth]{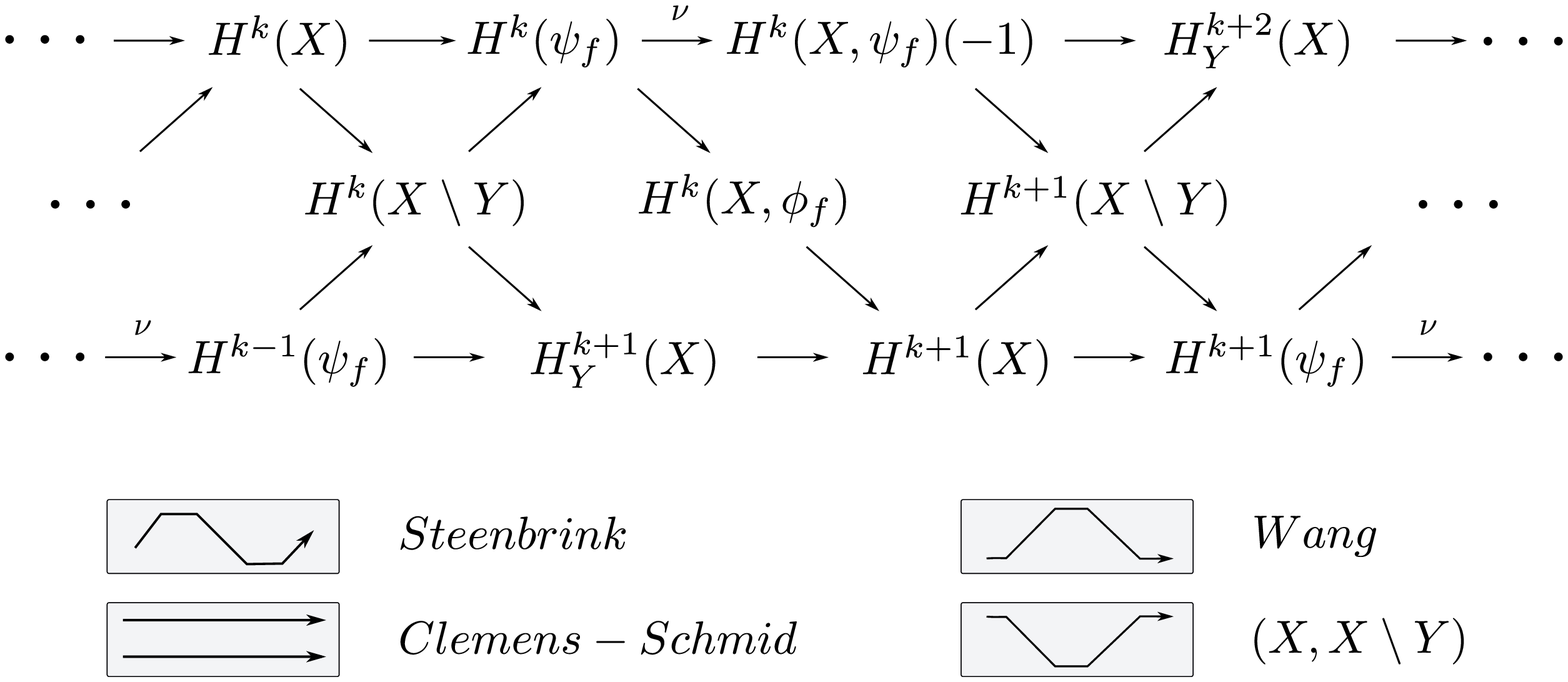}
\caption{Long exact sequences for a normal crossing degeneration}
\label{fig:les-nc-deg}      
\end{figure}
Note that the Clemens-Schmid exact sequence is a 4-term sequence (given by the horizontal lines in Fig.~\ref{fig:les-nc-deg}) and that there is a jump in degrees $k\ra k+2$, rather than $k\ra k+1$. 

As an example, we study this sequence in the setting of Ex.~\ref{example-MHS5}.

\begin{example} 
\label{example-Clemens-Schmid}
In the setting of  Ex.~\ref{example-MHS5}, the central fibre $Y$ is a necklace of three projective lines, so we have
\[\dim H^i(X)=\dim H^i(Y)=\left\{\begin{array}{ll} 3,\ \ &i=2\\ 1, & i=1 \\ 1,&i=0. \end{array}\right.\]
Moreover, the nearby fibre cohomology $H^\bullet(\psi_f)$ is that of an elliptic curve.
In Ex.~\ref{example-MHS5}, we found that $N=\log T$ gives an isomorphism of one-dimensional vector spaces
\[N\colon Gr_1^W H^1(\psi_f)\lra Gr_{-1}^W H^1(\psi_f)\]
and that $N$ is trivial on $H^i(\psi_f)$ for $i\neq 1$. Hence the Clemens-Schmid exact sequence decomposes into the following exact sequences, with ranks indicated by numbers underneath the cohomology groups.

Even degrees:
\[\begin{array}{c}0 \lra \underset{1}{H^{0}(X)}\lra \underset{1}{H^{0}(\psi_f)} \lra 0,\\[5mm]
0 \lra \underset{1}{H^{0}(\psi_f)} \lra \underset{3}{H^{2}_Y(X)}\lra  \underset{3}{H^{2}(X)}\lra \underset{1}{H^{2}(\psi_f)}\lra 0,\\[5mm]
0 \lra \underset{1}{ H^{2}(\psi_f)}\lra \underset{1}{H^{4}_Y(X)}\lra 0\end{array}\]
Odd degrees:
\[\begin{array}{c}0\lra \underset{0}{H^{1}_Y(X)}\lra \underset{1}{H^{1}(X)} \lra \underset{1}{Gr_{-1}^W H^{1}(\psi_f)},\\[5mm]
0\lra \underset{1}{Gr_{1}^W H^{1}(\psi_f)} \stackrel{\nu}{\lra} \underset{1}{Gr_{-1}^W H^{1}(\psi_f)} \lra 0,\\[5mm]
0 \lra \underset{1}{Gr_{1}^W H^{1}(\psi_f)}\lra \underset{1}{H^{3}_Y(X)}\lra  \underset{0}{H^{3}(X)}\lra 0\end{array}\]
\end{example}

In the remainder of this section we are going to present an interesting application of the Clemens-Schmid exact sequence, using it prove a theorem relating the degree of unipotency of $T$ to the cohomology of the dual intersection complex $\Gamma$ of $Y$. This application is taken from Morrison's article \cite{csesa}; we recommend this article for any reader interested in learning more about the Clemens-Schmid exact sequence and its applications. We begin with a definition.

\begin{definition} 
The normal crossing degeneration $f\colon X\ra\Delta$ is called \emph{maximally unipotent} if 
\[(T-\id)^m\neq 0\] 
for $m=\dim Y$ the dimension of a fibre of $f$.
\end{definition}

Recall that if $T$ is unipotent then, by Thm.~\ref{monodromythm}, we have $(T-\id)^{m+1}=0$; this explains the inclusion of the word \emph{maximal} in this definition. Maximally unipotent degenerations are particularly interesting as they play a central role in mirror symmetry \cite{lbhsi} \cite{msrcqtgm} \cite{RS14}. We have the following simple criterion for maximal unipotency.

\begin{theorem} The degeneration $f\colon X\ra\Delta$ is maximally unipotent if and only if
$H^m(\Gamma,\CC)\neq 0$. 
\end{theorem}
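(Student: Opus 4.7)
The plan is to identify the lowest-weight graded piece of the limiting mixed Hodge structure on $H^m(X_t,\CC)$ with $H^m(\Gamma,\CC)$, and to combine this identification with Schmid's structure theorem for the monodromy weight filtration. Throughout I work in the absolute weight convention of Sect.~\ref{sec-weights-from-endo}, in which the weight filtration on the nearby fibre has indices running from $0$ to $2m$.

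First, I would reduce maximal unipotency to a statement about $H^m(X_t,\CC)$ alone. Thm.~\ref{monodromythm} gives $(T_u-\id)^{m_k+1}=0$ on $H^k(X_t,\CC)$, where $m_k=\max\{p-q:\calE^{p,q}\neq 0\}$ on the relevant Hodge summand. Since the fibre has complex dimension $m$ and $p+q=k$, we have $p\le m$ and $q\ge 0$, forcing $m_k\le\min(k,2m-k)<m$ whenever $k\neq m$. Hence $(T-\id)^m\neq 0$ is equivalent to $N^m\neq 0$ on $H^m(X_t,\CC)$. By Thm.~\ref{limmhsthm} the weight filtration on $H^m(X_t,\CC)$ is the monodromy weight filtration centred at $m$, so by Sect.~\ref{sec-weights-from-endo} we have an isomorphism $N^m\colon \Gr^W_{2m}H^m(X_t,\CC)\xrightarrow{\sim}\Gr^W_0 H^m(X_t,\CC)$. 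Since $N^m$ shifts weight by $-2m$ and all weights lie in $[0,2m]$, the operator $N^m$ can only be nonzero via the top graded piece $\Gr^W_{2m}$. Maximal unipotency is therefore equivalent to $\Gr^W_0 H^m(X_t,\CC)\neq 0$.

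Next I would relate this to $H^m(Y,\CC)$ via the local invariant cycle theorem (stated at the opening of Sect.~\ref{sect:mhs}), which provides an exact sequence of mixed Hodge structures
\[ H^m(Y,\CC)\xrightarrow{r^*} H^m(X_t,\CC)\xrightarrow{N} H^m(X_t,\CC)(-1).\]
Because $N(W_0)\subseteq W_{-2}=0$, we have $W_0 H^m(X_t,\CC)\subseteq \ker N$ and therefore $\Gr^W_0\ker N=\Gr^W_0 H^m(X_t,\CC)$. Strictness of morphisms of mixed Hodge structures then produces a surjection
\[ H^m(\Gamma,\CC)\;\cong\;\Gr^W_0 H^m(Y,\CC)\;\longrightarrow\;\Gr^W_0 H^m(X_t,\CC),\]
where the identification on the left is Lemma~\ref{gradeds-of-Y}. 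This already establishes the implication $N^m\neq 0\Rightarrow H^m(\Gamma,\CC)\neq 0$.

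The main obstacle is the converse, which requires injectivity of the displayed surjection. I plan to extend the sequence above one term to the left using the Clemens--Schmid exact sequence, in the form
\[ H^{m-2}(X_t,\CC)(-1)\xrightarrow{\gamma} H^m(Y,\CC)\xrightarrow{r^*} H^m(X_t,\CC).\]
The Tate twist raises all weights by $2$, and the limiting MHS on $H^{m-2}(X_t,\CC)$ has weights in $[0,2m-4]$, so the image of $\gamma$ has weights in $[2,m]$; strictness forces $\Gr^W_0\gamma=0$, whence $\Gr^W_0 r^*$ is injective and the surjection above is an isomorphism. Combining everything yields $(T-\id)^m\neq 0\iff\Gr^W_0 H^m(X_t,\CC)\neq 0\iff H^m(\Gamma,\CC)\neq 0$. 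The delicate step is verifying the precise shape of the Clemens--Schmid sequence at this degree, for which I would follow Morrison's treatment in \cite{csesa}.
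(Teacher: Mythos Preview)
Your strategy matches the paper's exactly: reduce to the lowest weight graded piece of $H^m(\psi_f)$, pass through Clemens--Schmid to the lowest weight graded piece of $H^m(Y)$, and invoke Lemma~\ref{gradeds-of-Y}. The paper even uses the same weight bound on the term preceding $H^m(X)\cong H^m(Y)$ in the Clemens--Schmid sequence to conclude that $\Gr^W$ at the bottom is unaffected.

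The one place where your write-up diverges from the paper, and where there is a genuine slip, is the identification of that preceding term. You write the segment as
\[ H^{m-2}(X_t,\CC)(-1)\xrightarrow{\ \gamma\ } H^m(Y,\CC)\lra H^m(X_t,\CC),\]
but in the Clemens--Schmid sequence the term mapping to $H^m(X)\cong H^m(Y)$ is $H^m_Y(X)$ (equivalently $H_{m+2}(Y)$ up to a twist), not $H^{m-2}(X_t)(-1)$. The latter sits one step further back, feeding into $H^m_Y(X)$, so your displayed three-term complex is not exact at $H^m(Y)$: the kernel of $r^*$ equals the image from $H^m_Y(X)$, which can be strictly larger than the image from $H^{m-2}(X_t)(-1)$. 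Consequently your weight bound on $H^{m-2}(X_t)(-1)$ does not by itself force $\Gr^W_0 r^*$ to be injective. The fix is exactly what the paper does: replace your $\gamma$ by the map from $H^m_Y(X)$ and observe (via \cite[Cor.~6.28]{PS08}) that this group has no weights below $m$ in the absolute convention, so its contribution to $\Gr^W_0$ vanishes. You already anticipated that the precise shape of the sequence is the delicate point, and indeed it is the only thing that needs adjusting.
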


\begin{remark} In particular, this implies that a necessary condition for maximal unipotency is that $\Gamma$ has $m$-cells. In other words, $Y$ needs to have zero-dimensional strata, i.e. points where $m+1$ components of $Y$ come together. So a degeneration of a surface into two surfaces meeting along a curve is not maximally unipotent.
\end{remark}

\begin{proof} 
Maximal unipotency is equivalent to $N^{m}\neq 0$ on $H^m(\psi_f)$. 
By the construction of the weight filtration
in Sect. \ref{sec-weights-from-endo}, $N^{m}$ induces an isomorphism
\[N^{m}\colon \Gr^W_{m}H^m(\psi_f) \lra \Gr^W_{-m}H^m(\psi_f).\]
Since $N^{m+1}=0$, we have $\Gr^W_{-m}H^m(\psi_f)=W_{-m}H^m(\psi_f)$, so maximal unipotency is equivalent to $\Gr^W_{-m}H^m(\psi_f)\neq 0$.

Now, the Clemens-Schmid exact sequence
\[\cdots \lra H^{m-2}_Y(X)\lra  H^{m}(X)\lra H^{m}(\psi_f) \stackrel{\nu}{\lra} H^{m}(\psi_f) \lra\cdots\]
gives a sequence 
\[\cdots \lra H^{m-2}_Y(X)\lra  H^{m}(X)\lra \ker(N) \lra 0\]
and, since $N^{m+1}=0$, we have $\Gr^W_{-m}H^m(\psi_f)= \Gr^W_{-m}\ker(N)$. Moreover, from the extraordinary cup product \cite[Cor~6.28]{PS08}, one can deduce that $W_k H^{m-2}_Y(X) = 0$ for $k<0$ and, since $X$ retracts to $Y$, we thus have isomorphisms
\[\Gr^W_k H^{m}(Y)\lra \Gr^W_k\ker(N)\]
for $k<0$. In particular $\Gr_{-m}^W H^m(Y)=\Gr^W_{-m}\ker(N)$, so the assertion follows from Lemma~\ref{gradeds-of-Y}.
\end{proof}

\section{Hodge Theory of Toric Degenerations} \label{sect:Hodgetoric}

In \cite{eagsp}, van Garrel, Overholser and Ruddat give an introduction to the program of Gross-Siebert, the main aim of which is to study mirror symmetry via \emph{toric degenerations}. 
A toric degeneration is in some ways more specific and in other ways more general than a normal crossing degeneration. 
It is more restrictive because it requires the components $Y_i$ of the special fibre $Y$ to be toric varieties. 
On the other hand, it is more general because it allows more general local models for the total space, rather than just $t=z_1\cdot\ldots\cdot z_r$ as in the normal crossing case. 
In particular, the total space $X$ may feature singularities along $Y$. 
The class of degenerations containing both normal crossing degenerations and toric degenerations is the class of \emph{toroidal degenerations} (given by taking the definition of toric degeneration and dropping the condition that the $Y_i$ be toric varieties), these are similar to but yet more general than those defined in \cite{teI}. In the following section we review the Hodge theory of these objects, as presented in \cite{GS10} and \cite{Ru10}.

Whilst a logarithmic de Rham complex can still be defined for a toric degeneration, 
the analogue of the double complex $A^{\bullet,\bullet}$ doesn't work because the total space $X$ lacks smoothness. This means that we don't know how to obtain the filtration $W^Y_i$, as we don't have a suitable replacement for $\Omega_X^i$. Instead, to deal with log-differential forms on singular spaces, one uses logarithmic geometry; see \cite{eagsp}. 

More rigorously, let $f\colon X\ra\Delta$ be a toric degeneration over the unit disc $\Delta$ and set $Y=f^{-1}(0)$. 
By definition, the map $f$ has local models away from the codimension two locus $Z\subset Y$ ($f$ is log smooth on $X\setminus Z$).
Let $j\colon X\setminus Z\hra X$ denote the inclusion. 
Note that when we write $\Omega_X(\log Y)$ and similar objects in the following, we are implicitly referring to $j_*\Omega_X(\log Y)$, as these sheaves are badly behaved along $Z$.

With this definition, the sequence \eqref{relative-log-diff-seq} is still exact in our setting.
Let $(B,\calP)$ denote the dual intersection complex of this toric degeneration, as defined in \cite{eagsp}.
Let $D\subset B$ denote the singular locus of the affine structure of $B$ and $i\colon B\setminus D\hra B$ the inclusion.
The flat integral tangent vectors give a local system $\Lambda$ on $B\setminus D$. 
Similarly, we have integral cotangent vectors $\check\Lambda$ on $B\setminus D$, and $\check\Lambda=\Hom(\Lambda,\ZZ)$.
The pushforward $i_*\bigwedge^r\check\Lambda$ of the exterior powers of these cotangent vectors will be useful in what follows.

Let $\Aff(B,\ZZ)$ denote the sheaf of integral affine maps from $\Lambda$ to $\ZZ$. 
We have an exact sequence
\[0\ra\ZZ\ra \Aff(B,\ZZ)\ra \check\Lambda \ra 0.\]
The extension class $c_B$ of this sequence is known as the \emph{radiance obstruction} of the affine manifold $B\setminus D$; we have
\[c_B\in H^1(B\setminus D,\Lambda)=\Ext^1(\check\Lambda,\ZZ).\]

The associated exact sequences of exterior powers also give exact sequences after pushing forward
\begin{equation}
\label{aff-monodromy-seq}
0\lra\ZZ\otimes_\ZZ i_*\bigwedge^p\check \Lambda \lra i_*\bigwedge^{p+1}\Aff(B,\ZZ)\lra i_*\bigwedge^{p+1}\check\Lambda \lra 0
\end{equation}
and it can be checked that the connecting homomorphism in cohomology
\[ H^{q-1}(B,i_*\bigwedge^{p+1}\check\Lambda)\lra H^q(B,i_*\bigwedge^p\check\Lambda)\]
is given by cup product with $i_*c_B$.

To state the main result of this section, we give loose definitions of a couple of key terms; precise definitions of these terms may be found in \cite{GS03}, \cite{GS10}.

\begin{definition}
\begin{enumerate}
\item \emph{Positivity} for a tropical manifold is a notion of curvature. This notion is very natural, as every $(B,\calP)$ arising as the dual intersection complex of a toric log Calabi-Yau space is positive.
\item Roughly speaking, a tropical manifold $(B,\calP)$ is \emph{supersimple} if the discriminant $\Delta$ is locally a union of tropical hyperplanes.
\end{enumerate}
\end{definition}

The main result in \cite{GS10} is then the following.

\begin{theorem} Let $f\colon X\ra\Delta$ be a toric degeneration such that the dual intersection complex $(B,\calP)$ of $Y=f^{-1}(0)$ is supersimple and positive.
\begin{enumerate}
\item The spectral sequence 
\[E_1^{p,q}=H^q(Y,\Omega^{p}_{X/\Delta}(\log Y)|_Y)\Rightarrow \HH^{p+q}(Y,\Omega^{\bullet}_{X/\Delta}(\log Y)|_Y)\]
degenerates at $E_1$ giving rise to a weak Hodge decomposition
\[\HH^{p+q}(Y,\Omega^{\bullet}_{X/\Delta}(\log Y)|_Y) \cong \bigoplus_{p+q=k}H^q(Y,\Omega^{p}_{X/\Delta}(\log Y)|_Y).\]
\item There are canonical isomorphisms
\begin{eqnarray*}
H^q(Y,\Omega^p_{X/\Delta}(\log Y)|_Y) &=& H^q(B,i_*\bigwedge^p\check\Lambda\otimes\CC) \\
H^q(Y,\shT^p_{X/\Delta}(\log Y)|_Y) &=& H^q(B,i_*\bigwedge^p\Lambda\otimes\CC)
\end{eqnarray*}
where $\shT^p_{X/\Delta}(\log Y)$ denotes the $p$th wedge power of the relative log tangent sheaf.
\item The long exact sequences of cohomology coming from \eqref{relative-log-diff-seq} and \eqref{aff-monodromy-seq} are canonically isomorphic. In particular, the monodromy operator is identified with the wedge product with the radiance obstruction:
\begin{center}
\centerline{
\xymatrix@C=30pt
{
H^q(Y,\Omega^{p}_{X/\Delta}(\log Y)|_Y)\ar^{\res(\nabla)}[r] &
H^{q+1}(Y,\Omega^{p-1}_{X/\Delta}(\log Y)|_Y)\\
H^q(B,i_*\bigwedge^p\check\Lambda\otimes\CC)\ar@{=}[u]\ar^{c_B\wedge}[r]& H^{q+1}(B,i_*\bigwedge^{p-1}\check\Lambda\otimes\CC)\ar@{=}[u]
}}
\end{center}
\item If $\check f\colon \check X\ra\Delta$ is another toric degeneration with dual intersection complex $(\check B,\check\calP)$ that is Legendre dual to $(B,\calP)$ (via some multivalued piecewise linear functions $\varphi,\check\varphi$ respectively) then $(\check B,\check\calP)$ is also supersimple and positive and there are canonical isomorphisms
\begin{eqnarray*}
H^q(B,i_*\bigwedge^p\check\Lambda) &=& H^q(\check B,i_*\bigwedge^p\Lambda),\\
H^q(B,i_*\bigwedge^p\Lambda) &=& H^q(\check B,i_*\bigwedge^p\check\Lambda),
\end{eqnarray*}
that give the mirror duality of Hodge numbers via item 2 above.
\end{enumerate}
\end{theorem}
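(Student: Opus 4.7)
The plan is to prove item (2) first, since it is the technical heart of the theorem, and then to derive items (1), (3), (4) as consequences. For item (2), I would work locally on the strata of the polyhedral decomposition $\calP$ of $B$. Near a vertex $v$ of $\calP$ (corresponding to an irreducible toric component $Y_v \subset Y$), the toric local model for $f$ makes $\Omega^p_{X/\Delta}(\log Y)|_{Y_v}$ into a sheaf generated by the logarithmic forms associated to the rays emanating from $v$ in $\calP$, and these rays are exactly the directions of $\Lambda$ at $v$; thus $\bigwedge^p$ of their duals matches $\bigwedge^p \check\Lambda_v$. Along a higher-dimensional cell $\tau \in \calP$, gluing the local descriptions on vertices of $\tau$ amounts to identifying the contact of log-differentials on adjacent strata with the parallel transport of $\check\Lambda$ across $\tau$. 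Supersimplicity is used to control the local geometry at the discriminant $D$ (where the monodromy of $\check\Lambda$ concentrates), and positivity ensures that all obstructions to globalizing the local identifications vanish. The resulting map of sheaves is checked to be an isomorphism stratum by stratum, and then a \v{C}ech/spectral sequence argument with respect to the barycentric resolution of $\calP$ gives the isomorphism on cohomology.

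For item (1), once item (2) is in place, the $E_1$ degeneration translates into the statement that the analogous ``Hodge-to-de Rham'' spectral sequence computed via $i_*\bigwedge^\bullet\check\Lambda$ on $B$ degenerates. This follows from positivity, which ensures no obstructions to lifting classes, combined with a dimension count: the total dimension of $\HH^k(Y,\Omega^\bullet_{X/\Delta}(\log Y)|_Y)$ equals the Betti number $b^k$ of the nearby fibre (which is computed by the limiting Hodge filtration), and the right-hand side of item (2) sums to the same number once one identifies the tropical Hodge numbers with those of the general fibre.

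For item (3), the key observation is that under the isomorphism of item (2), the short exact sequence \eqref{relative-log-diff-seq} corresponds term-by-term to \eqref{aff-monodromy-seq}: the outer terms match by item (2), while the middle term matches because $\Omega^{p+1}_X(\log Y)$ combines the ``constant'' direction $f^*\tfrac{db}{b}$ (contributing the $\ZZ$-factor) with the relative log directions (contributing $\check\Lambda$), exactly as $\Aff(B,\ZZ)$ combines $\ZZ$ with $\check\Lambda$. The connecting homomorphism on the $X$-side is $\res(\nabla) = \tfrac{-1}{2\pi i}\log T$ by Thm.~\ref{monodromy-as-connecting-homo}, while the connecting homomorphism on the $B$-side is cup product with the extension class $c_B$ by definition of the radiance obstruction. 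Identifying the two long exact sequences gives the commutative square asserted in (3). Item (4) is then immediate: Legendre duality by definition swaps $\Lambda$ and $\check\Lambda$ and is symmetric in the supersimplicity and positivity conditions (a quick check on the definitions), so applying item (2) to both $(B,\calP)$ and $(\check B,\check\calP)$ yields the mirror identifications of Hodge numbers.

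The main obstacle is item (2), specifically producing the global isomorphism of sheaves on $Y$ versus $B$ in the presence of both the singular locus $Z \subset Y$ (where $X$ fails to be log smooth) and the discriminant locus $D \subset B$ (where $\Lambda$ fails to be locally constant). The hypothesis of supersimplicity is designed precisely to tame the local structure near $D$ so that the tropical cotangent sheaf $i_*\bigwedge^p\check\Lambda$ has a controlled \emph{``slab''}-type description matching the pushforward $j_*\Omega^p_{X/\Delta}(\log Y)|_Y$; nevertheless, the bookkeeping required to match log-geometric data on $X$ with affine-tropical data on $B$, and to verify that the comparison map is an isomorphism at every level of the polyhedral stratification, constitutes the bulk of the work.
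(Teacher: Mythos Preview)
The paper does not prove this theorem: it is stated as ``the main result in \cite{GS10}'' and no argument is given, since this is a survey. There is therefore no proof in the paper to compare your proposal against.

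That said, a few remarks on your sketch relative to what is actually done in \cite{GS10}. Your overall architecture --- establish (2) first by a stratum-by-stratum sheaf comparison, then deduce (1), (3), (4) --- is broadly in the right spirit. However, your argument for (1) has a genuine gap: you propose to deduce $E_1$-degeneration from a dimension count matching $\sum h^q(B,i_*\bigwedge^p\check\Lambda\otimes\CC)$ against the Betti numbers of the nearby fibre. But in this generality you do not have independent access to those Betti numbers; the logarithmic de Rham complex on $Y$ is precisely what is supposed to compute them, so the argument is circular. In \cite{GS10} the degeneration is obtained instead by exhibiting an explicit bigraded decomposition of the hypercohomology (via a barycentric resolution adapted to $\calP$ and a careful analysis of the log Dolbeault complex on each stratum), not by comparing totals. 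Your description of how positivity and supersimplicity enter is also too loose: positivity is not used to ``kill obstructions to globalizing local identifications'' in (2) but rather enters through convexity properties needed for the cohomological computations on the toric pieces, while supersimplicity controls the local monodromy of $\check\Lambda$ near $D$ so that $i_*\bigwedge^p\check\Lambda$ admits an explicit combinatorial description. Items (3) and (4) in your sketch are essentially fine as outlines once (2) is in hand.
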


Dropping the restrictive supersimplicity assumption, parts of this theorem were generalized in \cite{Ru10}. 
This includes the new discovery (related to item 4 in the above theorem) that if one of the mirror partners $X$, $\check{X}$ is an orbifold, then one has a diagram
\begin{equation*} \label{nice}
\begin{array}{ccc}
h_\st^{p,q}(X_\eta) & =\joinrel= & h_\st^{d-p,q}(\check X_\eta)\\
\rotatebox{90}{$\le$} & & \rotatebox{90}{$\le$} \\
h^{p,q}(X_\eta) & & h^{d-p,q}(\check X_\eta)\\
\rotatebox{90}{$\le$} & & \rotatebox{90}{$\le$} \\
h_\aff^{p,q}(Y) & =\joinrel= & h_\aff^{d-p,q}(\check Y)\\
\end{array}
\end{equation*}
where 
\begin{itemize}
\item $d=\dim Y$,
\item $X_\eta$, $\check X_\eta$ denote the generic fibres of $f,\check f$ respectively,
\item $h_\st^{p,q}$ denotes Batyrev's stringy Hodge numbers.
\item $h_\aff^{p,q}(Y):=\rk H^q(B,\bigwedge^p\check\Lambda)$, and similarly $h_\aff^{p,q}(\check Y):=\rk H^q(\check B,\bigwedge^p\check\Lambda)$.
\end{itemize}
Moreover, the differences are mirror dual, i.e.,
\begin{eqnarray*}
h_\st^{p,q}(X_\eta)-h^{p,q}(X_\eta)&=&h^{d-p,q}(\check X_\eta)-h_\aff^{d-p,q}(\check Y),\\
h_\st^{p,q}(\check X_\eta)-h^{p,q}(\check X_\eta)&=& h^{d-p,q}(X_\eta)-h_\aff^{d-p,q}(Y).
\end{eqnarray*}
This has been proven under some assumptions, see \cite{Ru10} for details.

\begin{example}[Tropical elliptic curve]
The family $f\colon X\ra\AA^1$ from Ex. \ref{example-MHS5} is a toric degeneration and $B$ is a circle with three integral points, so $B=\RR/3\ZZ$. Then $\calP$ is the subdivision of $B$ into three unit length intervals.
We have $D =\emptyset$ and $\Lambda\cong\check\Lambda\cong\ZZ$ are constant sheaves. The tropical Hodge diamond
\[\begin{array}{ccc} & H^0(B,\ZZ) &  \\
H^0(B,\check\Lambda) & & H^1(B,\ZZ)  \\
& H^1(B,\check\Lambda) & 
\end{array}\]
is indeed that of an elliptic curve, as each term is isomorphic to $\ZZ$.
\end{example}

\noindent {\small \textbf{Acknowledgements.} A part of these notes was written while the authors were in residence at the Fields Institute Thematic Program on Calabi-Yau Varieties: Arithmetic, Geometry and Physics; we would like to thank the Fields Institute for their support and hospitality.}

\bibliography{books}
\bibliographystyle{amsplain}

\end{document}